\documentclass{article}
%%%%%%%%%%%%%%%%%%%%%%%%%%%%%%%%%%%%%%%%%%%%%%%%%%%%%%%%%%%%%%%%%%%%%%%%%%%%%%%%%%%%%%%%%%%%%%%%%%%%%%%%%%%%%%%%%%%%%%%%%%%%%%%%%%%%%%%%%%%%%%%%%%%%%%%%%%%%%%%%%%%%%%%%%%%%%%%%%%%%%%%%%%%%%%%%%%%%%%%%%%%%%%%%%%%%%%%%%%%%%%%%%%%%%%%%%%%%%%%%%%%%%%%%%%%%
\usepackage[top=4cm, bottom=4cm, left=3cm, right=3cm]{geometry}
\usepackage{graphics}
 \usepackage{graphicx}
 \usepackage{epsfig}
 \usepackage{amsmath}
\usepackage{amsfonts}
\usepackage{amssymb}
\usepackage{xcolor}
\usepackage{enumerate}
\usepackage{hyperref}
\hypersetup{colorlinks=true,linkcolor=blue,filecolor=mangeta}
\newtheorem{theorem}{Theorem}[section]

\newtheorem{proposition}{Proposition}[section]
\newtheorem{lemma}{Lemma}[section]

\newtheorem{remark}{Remark}

\newenvironment{proof}[1][Proof]{\textbf{#1.} }{\ \rule{0.5em}{0.5em}}

\numberwithin{equation}{section}
\def \dis {\displaystyle}

\def \R {\mathbb{R}}

\def \O {\mathcal{O}}

\def\dT{dx\, dt}

\def \hvarphi \widehat{\varphi}

\def\dq{dx\,dt}

\def \hvarphi \widehat{\varphi}
%\usepackage[notref,notcite]{showkeys}
%%%%%%%%%%%%%%%%%%%%%%%%%%%%%%%%%%
  %%%%
  %%%%
  %%%%
%%%%%%%%%%%%%%%%%%%%%%%%%%%%%%%%%%

%

  \begin{document}
  \title{Stackelberg exact controllability of a class of nonlocal parabolic equations}
 \author{{{Landry Djomegne} \thanks{{\it
 				University of Dschang, BP 67 Dschang, Cameroon, West region,
 				email~: {\sf landry.djomegne\char64yahoo.fr} }}}
 	{{ \quad Cyrille Kenne} \thanks{{\it Laboratoire LAMIA, Universit\'e des Antilles, Campus Fouillole, 97159 Pointe-{\`a}-Pitre Guadeloupe (FWI)-Laboratoire L3MA, UFR STE et IUT, Universit\'e des Antilles, Schoelcher, Martinique,
 				email~: {\sf kenne853\char64gmail.com} }}}
% 	{{ \quad Ren\'e Dorville} \thanks{{\it
% 				Laboratoire L3MA, UFR STE et IUT, Universit\'e des Antilles, Schoelcher, Martinique,
% 				email~: {\sf rene.dorville\char64orange.fr} }}}
% 			{{ \quad Pascal Zongo} \thanks{{\it
% 						Laboratoire L3MA, UFR STE et IUT, Universit\'e des Antilles, Schoelcher, Martinique,
% 						email~: {\sf pascal.zongo\char64gmail.com} }}}
 }

  \date{\today}
  
  \maketitle	
  	
%  \Huge{Document}
% \tableofcontents

\begin{abstract}
This paper deals with a multi-objective control problem for a class of nonlocal parabolic equations, where the non-locality is expressed through an integral kernel. We present the Stackelberg strategy that combines the concepts of controllability to trajectories with optimal control. The strategy involves two controls: a main control (the leader) and a secondary control (the follower). The leader solves a controllability to trajectories problem which consists to drive the state of the system to a prescribed target at a final time while the follower solves an optimal control problem which consists to minimize a given cost functional. The paper considers two cases: in the first case, both the leader and the follower act in the interior of the domain, and in the second case, the leader acts in the interior of the domain and the follower acts on a small part of the boundary. These results are applied to both linear and nonlinear nonlocal parabolic systems.
\end{abstract}

\textbf {2010} Mathematics Subject Classification. {35K05; 49J20; 93B05; 93B07; 93C20.}\par
\noindent
{\textbf {Key-words}}~:~Parabolic equation; Nonlocal terms; Carleman estimates; Null controllability; Stackelberg strategy.

\section{Introduction}\paragraph{}

Let $\Omega\subset\R^N,\ N\geq 1$, be a bounded open set with a regular boundary $\partial\Omega$. Let $\omega$ and $\O$ be two arbitrary nonempty open sets of $\Omega$ such that $\omega\cap\O=\emptyset$ and let $\Gamma$ be a nonempty open subset of $\partial\Omega$.  For the  real number $T > 0$, we denote $Q =(0, T )\times \Omega$ and $\Sigma =(0, T )\times \partial\Omega$. Let $K(t,x,\theta)\in L^\infty(Q\times \Omega)$ represents an integral kernel.

 In the sequel, we will denote the norm in $L^\infty(Q\times \Omega)$ by $\|\cdot\|_{\infty}$ and the symbol $C$ is used to design a generic positive constant whose value can change even from line to line. We also denote by $\nu=\nu(x)$ the outward unit  normal vector at the point $x\in \partial\Omega$.
  We consider the following linear nonlocal parabolic systems:
  \begin{equation}\label{eq}
  \left\{
  \begin{array}{rllll}
  \dis y_{t}-\Delta y+\int_\Omega K(t,x,\theta)y(t,\theta)\ d\theta &=&f\chi_{\omega}+v\chi_{\O}& \mbox{in}& Q,\\
  \dis y&=&0& \mbox{on}& \Sigma, \\
  \dis  y(0,\cdot)&=&y^0 &\mbox{in}&\Omega
  \end{array}
  \right.
  \end{equation}
and
\begin{equation}\label{eqbis}
	\left\{
	\begin{array}{rllll}
		\dis q_{t}-\Delta q+\int_\Omega K(t,x,\theta)q(t,\theta)\ d\theta &=&g\chi_{\omega}& \mbox{in}& Q,\\
		\dis q&=&u\textbf{1}_{\Gamma}& \mbox{on}& \Sigma, \\
		\dis  q(0,\cdot)&=&q^0 &\mbox{in}&\Omega,
	\end{array}
	\right.
\end{equation}
where $y=y(t,x)$ and $q=q(t,x)$ denote the states, the functions $f,\ g,\  u$ and $v$ are the controls given in appropriate spaces, $y^0$ and $q^0$ are given initial datum. Here, $\chi_{\mathcal{A}}$ denotes the characteristic function of the set $\mathcal{A}$ and $\textbf{1}_{\Gamma}\in \mathcal{C}^2(\partial\Omega)$ is a smooth nonnegative function such that supp $\textbf{1}_{\Gamma}=\overline{\Gamma}$. We denote by $y_{t}$ the partial derivative of $y$ with respect to $t$.
  
In this work, we develop the Stackelberg strategy \cite{Lions1994Stackelberg1} for the nonlocal heat equations. We assume that we can act on the systems  through a hierarchic of controls. More precisely, we combine the optimal control strategy with the exact controllability to the trajectories of systems \eqref{eq} and \eqref{eqbis}.

The concept of Stackelberg competition introduced in 1934 \cite{Von1934Stackelberg} is a game-theoretic approach where one player (the leader) makes a strategic decision first, and the second player (the follower) then respond based on the leader's decision.
 
 In the framework of PDEs, the hierarchic control was introduced by J-L. Lions in 1994 \cite{Lions1994Stackelberg1, Lions1994Stackelberg2} to study a bi-objective control problem for the wave and heat equation respectively. In theses works, the author acted on the different systems with two controls. The leader solving an approximate controllability problem while the follower solving an optimal control problem. In the past years, many other researchers have used hierarchic control in the sense of Lions (see for instance \cite{djomegnebackward, djomegne2023, kere2017coupled, mercan1, mercan2, mercan3, Nakoulima2007, romario2018}). Recently, the authors in \cite{djomegnelinear,teresa2018} used the hierarchic control combining the concept of controllability  with robustness.\par 
 
All the previous work deal with hierarchic strategy with distributed controls. The first paper which solve with the boundary controls is the one given by \cite{araruna2019boundary}. In that paper, the authors dealt with the Stackelberg-Nash exact controllability of parabolic equations with the possibility of the leader and the followers being placed on the boundary. In \cite{liliana2020}, the authors extend and discuss the results concerning the robust hierarchic strategy for the heat equation using boundary controls. Recently, the authors in \cite{carreno2023null} considered a multi-objective control problem for the Kuramoto-Sivashinsky equation with a distributed control called leader and two boundary controls called followers.

In all the above cited works, the hierarchic strategy were applied to local systems. In this paper, we extend the results concerning the hierarchic strategy to nonlocal parabolic equations. This extension introduces additional difficulties, particularly when establishing new observability inequalities of Carleman for adjoint systems associated with equations \eqref{eq} and \eqref{eqbis}. To the best of our knowledge, this has not been done before.
  
In this paper, we extend the Stackelberg control of Lions to nonlocal systems with an integral kernel \eqref{eq} and \eqref{eqbis}. To fix ideas, we begin by explaining the strategy control  to system \eqref{eq}. To be more specific, we introduce a nonempty open set $\O_d\subset \Omega$, representing an observation domain of the follower and define the cost functional
  \begin{equation}\label{all16}
  J(f;v)=\frac{1}{2}\int_0^T\int_{\O_d}|y-y_d|^2\ dxdt+\frac{\mu}{2}\int_0^T\int_{\O}|v|^2\ dxdt,
  \end{equation}
  where $\mu$ is a positive constant and $y_d\in L^2((0,T)\times \O_d)$ is given. The structure of the control process we will follow is described as follows:
\begin{enumerate}
	\item Once the leader $f$ has been fixed, we look for a control $\hat{v}$ depending of $f$ which solves the following problem
	\begin{equation}\label{Nash1}
		\begin{array}{rllll}
			\dis	J(f;\hat{v})=\inf_{v}J(f;v).
		\end{array}
	\end{equation}
The function $\hat{v}$ satisfied \eqref{Nash1} is called an optimal control for $J$.
Since the functional $J$ is convex (because the system \eqref{eq} is linear), then  $\hat{v}$ is an optimal control for $J$ if and only if
\begin{equation}\label{Nashprime}
	\frac{\partial J}{\partial w}(f;\hat{v})\cdot w=0,\ \ \forall w\in L^2((0,T)\times \O).
\end{equation}
	
	\item Let us consider an uncontrolled trajectory of \eqref{eq}, that is a function $\bar{y}=\bar{y}(t,x)$ solution of
	\begin{equation}\label{eqbar}
		\left\{
		\begin{array}{rllll}
			\dis \bar{y}_{t}-\Delta \bar{y}+\int_\Omega K(t,x,\theta)\bar{y}(t,\theta)\ d\theta &=&0& \mbox{in}& Q,\\
			\dis \bar{y}&=&0& \mbox{on}& \Sigma, \\
			\dis  \bar{y}(0,\cdot)&=&\bar{y}^0 &\mbox{in}&\Omega.
		\end{array}
		\right.
	\end{equation}
	Once the optimal control has been identified and fixed for each $f$, we look for a control $\hat{f}$ that satisfies
	\begin{equation}\label{mainobj}
		y(T,x)=	\bar{y}(T,x)=0\ \mbox{in}\ \Omega.
	\end{equation}	
\end{enumerate} 

Now, we are also interested to study the system \eqref{eqbis}. The same methodology presented above can be used here. The cost functional \eqref{all16} should be replaced by 

 \begin{equation}\label{j}
	\widetilde{J}(g;u)=\frac{1}{2}\int_0^T\int_{\O_d}|q-q_d|^2\ dxdt+\frac{\mu}{2}\int_0^T\int_{\Gamma}|u|^2\ d\sigma dt,
\end{equation}
 where again $\mu$ is a positive constant and $q_d\in L^2((0,T)\times \O_d)$ is a given functional. The control process can be described as follows.
\begin{enumerate}
	\item For each leader $g$, we find a control $\hat{u}$ which solves the following problem
	\begin{equation}\label{op}
		\begin{array}{rllll}
			\dis	\widetilde{J}(g;\hat{u})=\inf_{u}\widetilde{J}(g;u).
		\end{array}
	\end{equation}
The function $\hat{u}$ satisfied \eqref{op} is called an optimal control for $\widetilde{J}$ given by  \eqref{j}.
Since the functional $\widetilde{J}$ is convex, then  $\hat{u}$ is an optimal control for $\widetilde{J}$ if and only if
\begin{equation}\label{hatu}
		\frac{\partial \widetilde{J}}{\partial w}(g;\hat{u})\cdot w=0,\ \ \forall w\in L^2((0,T)\times \Gamma).
\end{equation}

	\item Let us  fix an uncontrolled trajectory of system \eqref{eqbis}, that is a function $\bar{q}=\bar{q}(t,x)$ solution to
	\begin{equation}\label{eqbisbar}
		\left\{
		\begin{array}{rllll}
			\dis \bar{q}_{t}-\Delta \bar{q}+\int_\Omega K(t,x,\theta)\bar{q}(t,\theta)\ d\theta &=&0& \mbox{in}& Q,\\
			\dis \bar{y}&=&0& \mbox{on}& \Sigma, \\
			\dis  \bar{q}(0,\cdot)&=&\bar{q}^0 &\mbox{in}&\Omega.
		\end{array}
		\right.
	\end{equation}
	We look for a control $\hat{g}$ verifying
	\begin{equation}\label{obj}
		q(T,x)=	\bar{q}(T,x)=0\ \mbox{in}\ \Omega.
	\end{equation}	
\end{enumerate} 

As mentioned in \cite{lu2016null}, the study of the controllability of systems \eqref{eq} and \eqref{eqbis} is motivated by many relevant applications from physics, chemotaxis, biology and ecology; we can see \cite{kavallaris2018} for instance where many models are introduced. Such systems like \eqref{eq} and \eqref{eqbis} can appear for instance in population dynamics where the state $y(t, x)$ represents the density of the species at time $t$ and position $x$, while the nonlocal term $\int_\Omega K(t,x,\theta)y(t,\theta)\ d\theta$ is considered as the rate of reproduction. This integral term is a way to express that the evolution of the species in a point of space depends on the total
amount of the species \cite{genninonlocal}.

Some results are available in the literature concerning the controllability of parabolic nonlocal systems. The first result concerning this subject is the one studied in \cite{deleo2014}, where the authors established the exact controllability of the Schr\"{o}dinger equation with a nonlocal term. In \cite{lu2016null}, the authors proved
an interior null controllability result for the system like \eqref{eq} when the kernel is time-independent and analytic. They used some compactness-uniqueness arguments in order to obtain the unique continuation properties. A similar result for the wave equation has also been derived. The results of \cite{lu2016null} have been extended in \cite{lissy2018internal} to a general coupled parabolic system. Moreover, similar results have been obtained in \cite{micu2018local} for a one-dimensional equation with a time-independent kernel in separated variables, by means of spectral analysis techniques. 
Later on, the authors in \cite{biccari2019null} have extended these mentioned results, by relaxing the assumptions on the kernel. The authors studied both the linear and the semilinear case, by using a Carleman approach. Recently, in \cite{genninonlocal}, the authors have proved the null controllability property for the two degenerate nonlocal systems using the Kakutani’s fixed point Theorem. The authors in \cite{balch2021null} have proved the internal null-controllability of a heat equation with Dirichlet boundary conditions and perturbed by a semilinear nonlocal term.

\subsection{Main results}
In this paper, we use the Carleman approach to solve the null controllability of the nonlocal systems \eqref{eq} and \eqref{eqbis}. Using the same approach as the authors of \cite{biccari2019null}, we assume that the integral kernel $K\in L^\infty(Q\times \Omega)$ satisfies the following assumption:
 \begin{equation}\label{k}
 	\mathcal{K}=:\sup_{(t,x)\in \overline{Q}}\exp\left(\frac{\sigma^-}{l^4(t)}\right)\int_\Omega|K(t,x,\theta)|\ d\theta<+\infty,
 \end{equation}
 where  the functions $\sigma^-$ and $l(t)$ are defined by \eqref{sigmamoins} and \eqref{l}, respectively. The assumption \eqref{k} means that, the kernel $K=K(t,x,\theta)$ is bounded and decrease exponentially when $t$ goes to $0^+$ and to $T^-$ \cite{biccari2019null}.
 
We firstly address the Stackelberg control of system \eqref{eq} where all the controls are localized on the interior of the domain. We have the following result.

\begin{theorem}\label{theolinear}
	
  Assume that $\dis \O_d\cap\omega\neq\emptyset$ and the kernel $K$ satisfies \eqref{k}. Let $\bar{y}$ be the unique solution to \eqref{eqbar} associated to the initial state $\bar{y}^0$. Suppose that $y^0\in L^2(\Omega)$ and $\mu$ is large enough. Then,
	there exists a positive real weight function $\varpi_1=\varpi_1(t)$ blowing up at $t=T$ such that for any $y_{d}\in L^2((0,T)\times\O_d)$ satisfying
	\begin{equation}\label{nou}
		\int_{0}^{T}\int_{\O_d}\varpi_1^{2}|\bar{y}-y_{d}|^2\ dxdt<+\infty,
	\end{equation}
	 there exist a control $\hat{f}\in L^2((0,T)\times \omega)$ and a unique optimal control $\hat{v}\in L^2((0,T)\times \O)$ such that the corresponding solution to \eqref{eq} satisfies \eqref{mainobj}. 
\end{theorem}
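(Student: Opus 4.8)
\noindent We outline the strategy; the details are developed in the following sections.

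\textbf{Step 1: the follower's problem and the optimality system.} Since $K\in L^\infty(Q\times\Omega)$, the nonlocal term in \eqref{eq} is a bounded perturbation of the heat operator, so \eqref{eq} is well posed and the map $v\mapsto y$ is affine and continuous from $L^2((0,T)\times\O)$ into $C([0,T];L^2(\Omega))$. Hence, for each fixed leader $f$, the functional $J(f;\cdot)$ in \eqref{all16} is continuous, strictly convex and coercive (because $\mu>0$), so \eqref{Nash1} admits a unique solution $\hat v=\hat v(f)$, characterized by \eqref{Nashprime}. Introducing the adjoint state $p$, solution of the backward nonlocal problem with $p(T)=0$, homogeneous Dirichlet condition and source $(y-y_d)\chi_{\O_d}$, whose nonlocal term is the $L^2$-transpose $\int_\Omega K(t,\theta,x)p(t,\theta)\,d\theta$, the optimality condition \eqref{Nashprime} becomes $\hat v=-\frac1\mu\,p\,\chi_{\O}$. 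Therefore $(y,p)$ solves the coupled forward--backward optimality system
\begin{equation*}
\left\{\begin{array}{l}
y_t-\Delta y+\int_\Omega K(t,x,\theta)y(t,\theta)\,d\theta = f\chi_\omega-\dfrac{1}{\mu}\,p\,\chi_{\O},\quad y(0)=y^0,\\[2mm]
-p_t-\Delta p+\int_\Omega K(t,\theta,x)p(t,\theta)\,d\theta = (y-y_d)\chi_{\O_d},\quad p(T)=0,
\end{array}\right.
\end{equation*}
with $y=p=0$ on $\Sigma$. Writing $z=y-\bar y$, where $\bar y$ is the trajectory \eqref{eqbar}, the term $\int_\Omega K\bar y$ cancels, $z(0)=y^0-\bar y^0$, the source of the $p$-equation becomes $z\chi_{\O_d}+(\bar y-y_d)\chi_{\O_d}$, and the leader's goal \eqref{mainobj} is equivalent to $z(T)=0$: a null controllability problem for the optimality system, with control $f\chi_\omega$ and data $(y^0-\bar y^0,\ \bar y-y_d)$.

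\textbf{Step 2: duality and the Carleman estimate.} By the Fursikov--Imanuvilov duality (or penalization) method, this null controllability property --- together with the identification of the admissible class of $y_d$ --- is equivalent to a weighted observability inequality for the adjoint of the optimality system: a coupled system of two nonlocal parabolic equations, one forward, one backward, with coupling terms supported in $\O_d$ and $\O$ and with the transposed kernels. The plan is: (i) for each scalar nonlocal equation write a global Carleman estimate in the Fursikov--Imanuvilov weights built from $\sigma^-$ and $l$, putting the nonlocal integral on the right-hand side; assumption \eqref{k} --- and its analogue for the transposed kernel --- is precisely what makes the exponential decay of $K$ near $t=0$ and $t=T$ beat the blow-up of the weight, so that after taking the Carleman parameters large the nonlocal term is absorbed into the left-hand side; (ii) add the two estimates and, using the crucial hypothesis $\O_d\cap\omega\neq\emptyset$, absorb the local integral over $\O_d$ of one component through a cutoff supported where the observation in $\omega$ already controls it, transferring information between the two states via the equations at the cost of a constant made small by choosing $\mu$ large. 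The outcome is a Carleman-type inequality
\begin{equation*}
\iint_{Q}\rho^{-2}\bigl(|\varphi|^2+|\psi|^2\bigr)\,dx\,dt\ \le\ C\iint_{(0,T)\times\omega}\rho_\omega^{-2}\,|\varphi|^2\,dx\,dt
\end{equation*}
for suitable positive weights $\rho,\rho_\omega$, which yields the required observability inequality.

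\textbf{Step 3: construction of the controls.} From the observability inequality, a standard argument (minimizing over the adjoint terminal datum a penalized quadratic functional and letting the penalization parameter go to $0$) produces $\hat f\in L^2((0,T)\times\omega)$, in fact in a weighted subspace, such that $z(T)=0$, i.e.\ \eqref{mainobj} holds; this construction is legitimate precisely when the datum $\bar y-y_d$ belongs to the space dual to the weighted space in which the $\psi$-component is estimated, which is exactly \eqref{nou} with $\varpi_1$ a power of the Carleman weight restricted to $\O_d$, blowing up as $t\to T$. Finally, by Step 1, $\hat v=-\frac1\mu\,p\,\chi_{\O}$ is the unique optimal control of $J$, and the weight bounds ensure $\hat f\in L^2((0,T)\times\omega)$ and $\hat v\in L^2((0,T)\times\O)$.

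\textbf{Main obstacle.} The genuinely new difficulty, not present in the local theory, is Step 2: establishing the \emph{coupled} global Carleman estimate when the two nonlocal integral terms are present. One must choose the weight functions so that \eqref{k} renders these terms absorbable, and at the same time close the coupled estimate with a single observation region --- which is what forces the hypothesis ``$\mu$ large enough''. Once this Carleman estimate is available, the remaining steps are routine adaptations of the Stackelberg(--Nash) controllability machinery for the heat equation, and the linearity of \eqref{eq} makes the passage from the observability inequality to the controls straightforward.
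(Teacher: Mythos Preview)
Your outline is correct and matches the paper's argument in all essentials: reduce to null controllability of the optimality system (Proposition~\ref{quasi}), apply the scalar Carleman estimate to each adjoint equation and absorb the nonlocal integrals via assumption~\eqref{k} (Proposition~\ref{propcarl}, Step~1), eliminate the second local term by the de~Teresa cutoff trick on $\omega'\subset\O_d\cap\omega$ with $\mu$ large (Proposition~\ref{propcarl}, Step~2), upgrade to an observability inequality with weights not vanishing at $t=0$ (Proposition~\ref{pro}, which produces $\varpi_1$), and conclude by penalization (Proposition~\ref{linear}). One small discrepancy: you (correctly) write the adjoint equations with the transposed kernel $K(t,\theta,x)$ and accordingly flag the need for the transposed analogue of \eqref{k}, whereas the paper keeps $K(t,x,\theta)$ in \eqref{pop} and \eqref{rhoadj}--\eqref{psiadj} --- a tacit symmetry assumption --- so in the paper's setting only \eqref{k} itself is invoked.
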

 
Now, we are interested to the system \eqref{eqbis} where the leader control is applied on the interior of the domain and the follower is localized on the boundary. In the following, we need the Hilbert spaces 
\begin{equation}\label{hrsa}
	H^{r,s}(Q)=L^2((0,T); H^r(\Omega))\cap H^s((0,T); L^2(\Omega)),
\end{equation}
for $r, s\in \R_+$ endowed with norms
\begin{equation}\label{hrs}
	\|z\|_{H^{r,s}(Q)}=\left(\|z\|^2_{L^2((0,T);H^r(\Omega))}+\|z\|^2_{H^s((0,T); L^2(\Omega))}\right)^{1/2}.
\end{equation}

\begin{remark}
	We can replace in \eqref{hrs} the domain $Q$ and $\Omega$ by $\Sigma$ and $\partial\Omega$, respectively. Then, we obtain the analogous space for functions defined on the boundary.
\end{remark}

We have the following result.

\begin{theorem}\label{theoboundary}
	Assume that $\dis \O_d\cap\omega\neq\emptyset$ and the kernel $K$ satisfies \eqref{k}. Assume that $q^0\in L^2(\Omega)$ and $\mu$ is sufficiently large. Let $\bar{q}$ be the unique solution to \eqref{eqbisbar} associated to the initial state $\bar{q}^0$. Then, there exists a positive real weight function $\varpi_2=\varpi_2(t)$ blowing up at $t=T$ such that for any $q_{d}\in L^2((0,T)\times\O_d)$ verifying
	\begin{equation}\label{noubis}
		\int_{0}^{T}\int_{\O_d}\varpi_2^{2}|\bar{q}-q_{d}|^2\ dxdt<+\infty,
	\end{equation}
	we can find a leader control $\hat{g}\in L^2((0,T)\times \omega)$ and a unique optimal control $\hat{v}\in H^{1/2,1/4}((0,T)\times \Gamma)$ such that the corresponding solution to \eqref{eqbis} satisfies \eqref{obj}. 
\end{theorem}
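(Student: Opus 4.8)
The plan is to follow the hierarchical scheme of the Introduction, the only genuinely new ingredient being a global Carleman estimate adapted to the nonlocal operator.

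\textbf{Step 1: solving the follower's problem.} First, for a fixed leader $g\in L^2((0,T)\times\omega)$ I would note that $u\mapsto\widetilde J(g;u)$ is strictly convex, continuous and coercive on $L^2((0,T)\times\Gamma)$ --- the nonlocal term being a bounded perturbation of the heat operator, so that \eqref{eqbis} is well posed --- hence it has a unique minimizer $\hat u$. Writing the optimality condition \eqref{hatu} and introducing the adjoint state $\psi$, i.e.\ the backward solution of the nonlocal operator with kernel $K(t,\theta,x)$, source $(q-q_d)\chi_{\O_d}$, zero lateral trace and $\psi(T,\cdot)=0$, an integration by parts will give $\hat u=-\tfrac1\mu\,\textbf{1}_{\Gamma}\,\frac{\partial\psi}{\partial\nu}$ on $(0,T)\times\Gamma$. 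Substituting this into \eqref{eqbis} produces the optimality system in $(q,\psi)$: a linear forward--backward nonlocal parabolic system, coupled through the zeroth order term $q\chi_{\O_d}$ and through the boundary condition $q=-\tfrac1\mu\,\textbf{1}_{\Gamma}^{2}\,\frac{\partial\psi}{\partial\nu}$ on $\Sigma$. Its well-posedness is inherited from the minimization problem, and a bootstrap argument (the source of the $\psi$-equation lies in $L^2(Q)$) gives $\psi\in H^{2,1}(Q)$, hence $\frac{\partial\psi}{\partial\nu}\in H^{1/2,1/4}(\Sigma)$ and $\hat u\in H^{1/2,1/4}((0,T)\times\Gamma)$, which is the regularity announced.

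\textbf{Step 2: reduction to null controllability.} By linearity I would set $\widehat q=q-\bar q$, $\widehat q_d=q_d-\bar q$, so that \eqref{obj} amounts to $\widehat q(T)=0$ for the same optimality system (with $q^0,q_d$ replaced by $q^0-\bar q^0,\widehat q_d$), and \eqref{noubis} reads $\int_0^T\int_{\O_d}\varpi_2^{2}|\widehat q_d|^2<+\infty$. I would then introduce the adjoint coupled system
\begin{equation*}
\left\{\begin{array}{rllll}
\dis -\phi_t-\Delta\phi+\int_\Omega K(t,\theta,x)\phi(t,\theta)\,d\theta&=&\gamma\chi_{\O_d}&\mbox{in}&Q,\\
\dis \gamma_t-\Delta\gamma+\int_\Omega K(t,x,\theta)\gamma(t,\theta)\,d\theta&=&0&\mbox{in}&Q,\\
\dis \phi=0,\quad\gamma=-\tfrac1\mu\,\textbf{1}_{\Gamma}^{2}\,\frac{\partial\phi}{\partial\nu}&&&\mbox{on}&\Sigma,\\
\dis \phi(T,\cdot)=\phi^T,\quad\gamma(0,\cdot)=0,
\end{array}\right.
\end{equation*}
and, pairing the optimality system with $(\phi,\gamma)$, obtain the duality identity
\begin{equation*}
\int_\Omega\widehat q(T)\,\phi^T\,dx=\int_\Omega(q^0-\bar q^0)\,\phi(0)\,dx-\int_0^T\int_{\O_d}\widehat q_d\,\gamma\,dxdt+\int_0^T\int_\omega g\,\phi\,dxdt.
\end{equation*}
A leader $g$ achieving $\widehat q(T)=0$ then exists as soon as $\phi^T\mapsto\int_\Omega(q^0-\bar q^0)\phi(0)-\int_0^T\int_{\O_d}\widehat q_d\gamma$ is continuous for the seminorm $\|\phi\|_{L^2((0,T)\times\omega)}$; splitting $\big|\int_0^T\int_{\O_d}\widehat q_d\gamma\big|\le\|\varpi_2\widehat q_d\|_{L^2}\|\varpi_2^{-1}\gamma\|_{L^2}$, everything reduces to the observability inequality
\begin{equation}\label{plan-obs}
\|\phi(0)\|_{L^2(\Omega)}^{2}+\int_0^T\int_{\O_d}\varpi_2^{-2}|\gamma|^2\,dxdt\le C\int_0^T\int_\omega|\phi|^2\,dxdt,
\end{equation}
where $\varpi_2^{-1}$ is a Carleman weight vanishing at $t=T$, so $\varpi_2$ blows up there.

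\textbf{Step 3: the Carleman estimate --- the hard part.} The heart of the matter is proving \eqref{plan-obs}, and this is where I expect the main obstacle to be. I would start from the Fursikov--Imanuvilov weights built on $l(t)$ and $\sigma^{\pm}$ (see \eqref{sigmamoins}, \eqref{l}) and, following \cite{biccari2019null}, establish a global Carleman estimate for the scalar nonlocal heat operator $\pm\partial_t-\Delta+\int_\Omega K\,\cdot\,d\theta$ by putting the integral term on the right-hand side and absorbing it once the Carleman parameters are large --- precisely the point where hypothesis \eqref{k} (used here also for the transposed kernel $K(t,\theta,x)$ appearing in the adjoint) is needed. I would then (i) apply this estimate to $\gamma$, a forward equation carrying the \emph{nonhomogeneous} boundary datum $-\tfrac1\mu\textbf{1}_{\Gamma}^{2}\frac{\partial\phi}{\partial\nu}$ on $\Gamma$, getting a weighted bound of $\gamma$ by a local integral over $\omega\cap\O_d$ plus a boundary integral $\tfrac1{\mu^{2}}\int_0^T\int_\Gamma(\cdot)\big|\frac{\partial\phi}{\partial\nu}\big|^2$; (ii) apply it to $\phi$, backward with homogeneous boundary datum and source $\gamma\chi_{\O_d}$, and absorb that source using (i); (iii) use the first adjoint equation on the nonempty overlap $\omega\cap\O_d$, where $\gamma=-\phi_t-\Delta\phi+\int_\Omega K(t,\theta,x)\phi\,d\theta$, to trade the local $\gamma$-term for local integrals of $\phi_t,\Delta\phi$, which a cut-off/energy argument dominates by $C\int_0^T\int_\omega|\phi|^2$; and (iv) absorb the leftover boundary term by taking $\mu$ \emph{sufficiently large}, together with a boundary Carleman estimate for $\phi$. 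Restoring the weights and adding a standard dissipation estimate near $t=0$ (using $\gamma(0,\cdot)=0$) then yields \eqref{plan-obs}. What makes this harder than the local, interior-control case is exactly that the integral terms do not localize --- so they must be handled globally through \eqref{k} --- and that the boundary coupling $\gamma|_\Gamma\propto\tfrac1\mu\frac{\partial\phi}{\partial\nu}$ forces a boundary Carleman estimate that can only be closed when $\mu$ is large.

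\textbf{Step 4: conclusion.} Finally, with \eqref{plan-obs} available, a standard penalization/duality argument --- minimizing $g\mapsto\frac12\|g\|_{L^2((0,T)\times\omega)}^{2}+\frac1{2\varepsilon}\|\widehat q(T)\|_{L^2(\Omega)}^{2}$ over the states of the optimality system and letting $\varepsilon\to0$ --- produces a leader $\hat g\in L^2((0,T)\times\omega)$ with
\begin{equation*}
\|\hat g\|_{L^2((0,T)\times\omega)}^{2}\le C\Big(\|q^0-\bar q^0\|_{L^2(\Omega)}^{2}+\int_0^T\int_{\O_d}\varpi_2^{2}|q_d-\bar q|^2\,dxdt\Big)<+\infty
\end{equation*}
(finite by \eqref{noubis}) such that the corresponding state satisfies $\widehat q(T)=0$, i.e.\ $q(T)=\bar q(T)$ as required by \eqref{obj}. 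Together with the associated optimal follower $\hat u\in H^{1/2,1/4}((0,T)\times\Gamma)$ from Step 1, this is the pair asserted in the theorem.
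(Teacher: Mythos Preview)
Your overall architecture---characterize the follower via an adjoint and obtain $\hat u\in H^{1/2,1/4}((0,T)\times\Gamma)$, reduce to null controllability of the optimality system, prove an observability inequality for the coupled adjoint by Carleman estimates, then conclude by penalization---is exactly the route the paper takes in Section~\ref{boundary}. The one place where your sketch is genuinely underdeveloped is step~(iv). The nonhomogeneous-boundary Carleman estimate you invoke (the paper uses the Imanuvilov--Puel--Yamamoto version, Lemma~\ref{prog}) leaves on the right-hand side a term
\[
s^{-1/2}\left\|\varphi^{-1/4}e^{-s\eta}\,\tfrac{1}{\mu}\tfrac{\partial\phi}{\partial\nu}\,\textbf{1}_\Gamma\right\|^{2}_{H^{1/2,1/4}(\Sigma)},
\]
and neither ``$\mu$ large'' nor any standard ``boundary Carleman estimate for $\phi$'' controls an $H^{1/2,1/4}$ trace norm by the weighted $L^2$ quantities on the left. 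The paper's device (Proposition~\ref{carlemang}) is to set $\widehat\phi=e^{-s\eta^*}(s\varphi^*)^{-1/4}\phi$, write the backward equation satisfied by $\widehat\phi$, apply a parabolic $H^{2,1}$ energy estimate, and use the continuity of the trace map $H^{2,1}(Q)\to H^{1/2,1/4}(\Sigma)$; this turns the boundary norm into weighted $L^2$ integrals of $\phi$ and $\gamma$ that are then absorbed by taking $s$ (not $\mu$) large. The hypothesis ``$\mu$ large'' is used separately, in the passage from the Carleman inequality to the observability inequality (energy estimates on $[0,T/2]$, see \eqref{poubn}--\eqref{estsigmabn}), not at the Carleman step itself.

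Two minor remarks. Your boundary condition $q=-\tfrac{1}{\mu}\textbf{1}_\Gamma^{2}\tfrac{\partial\psi}{\partial\nu}$ has both a sign and a squared cutoff that disagree with the paper's $z=\tfrac{1}{\mu}\tfrac{\partial p}{\partial\nu}\textbf{1}_\Gamma$; this is cosmetic. More interestingly, you correctly write the transposed kernel $K(t,\theta,x)$ in the adjoint and note that \eqref{k} must also be assumed for it; the paper keeps the same kernel throughout and hides the issue behind ``Fubini's theorem'', so on this point your version is actually the more careful one.
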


 The rest of the paper is organized as follows. In Section \ref{interiorpart}, we analyse the Stackelberg strategy concerning the case of distributed leader and follower, that is, the case of system \eqref{eq}. In this part, we firstly prove the existence and characterization of optimal control, then we prove some suitable Carleman inequalities and deduce the exact controllability result. Finally, we extend the results to the semilinear case. The Section \ref{boundary} concerns the control of system \eqref{eqbis} with distributed leader and boundary follower. In this section, we follow similar arguments as in Section \ref{interiorpart}.

 \section{Exact controllability with distributed leader and follower}\label{interiorpart}

The first step of this part is to reduce the problem of exact controllability to trajectory to a null controllability problem. So,
let us consider $z:=y-\bar{y}$, where $y$ and $\bar{y}$ are respectively solutions to \eqref{eq} and \eqref{eqbar}. The property \eqref{mainobj} is equivalent to a null controllability property for $z$, that is
\begin{equation}\label{znull}
	z(T,x)=0\ \mbox{in}\ \Omega,
\end{equation}	
where $z$ is solution of the system
\begin{equation}\label{eqz}
	\left\{
	\begin{array}{rllll}
		\dis z_{t}-\Delta z+\int_\Omega K(t,x,\theta)z(t,\theta)\ d\theta &=&f\chi_{\omega}+v\chi_{\O}& \mbox{in}& Q,\\
		\dis z&=&0& \mbox{on}& \Sigma, \\
		\dis  z(0,\cdot)&=&z^0 &\mbox{in}&\Omega,
	\end{array}
	\right.
\end{equation}
with $z^0:=y^0-\bar{y}^0$.  From now on, we are going to work with system \eqref{eqz} instead of \eqref{eq}.

The functional $J$ given by \eqref{all16} can be rewritten as follows
  \begin{equation}\label{jz}
	J(f;v)=\frac{1}{2}\int_0^T\int_{\O_d}|z-z_d|^2\ dxdt+\frac{\mu}{2}\int_0^T\int_{\O}|v|^2\ dxdt,
\end{equation}
where $z_d:=y_d-\bar{y}$.

Let us introduce the Hilbert space 
\begin{equation}\label{w}
	W(Q)= \left\{z \in L^2((0,T);H^1_0(\Omega)); z_t\in L^2\left((0,T);H^{-1}(\Omega)\right)\right\},
\end{equation}
equipped with the norm given by
\begin{equation}\label{}
	\|z\|^2_{	W(Q)}=\|z\|^2_{L^2((0,T);H^1_0(\Omega))}+\|z_t\|^2_{L^2\left((0,T);H^{-1}(\Omega)\right)}.
\end{equation}
It is well known (see, for instance \cite[Page 37]{lions2013}) that for any $y^0\in L^2(\Omega)$, $f\in L^2((0,T)\times \omega)$ and $v\in L^2((0,T)\times \O)$, the system \eqref{eqz} admits a unique weak solution $z\in W(Q)$. Moreover there exists a constant $C=C(\|K\|_{\infty}, T)>0$ such that the following estimation holds:
\begin{equation}\label{}
	\begin{array}{llllll}
		\dis \|z(T,\cdot)\|^2_{L^2(\Omega)}+\|z\|^2_{W(Q)}\leq 
		C\left(\|f\|^2_{L^2((0,T)\times \omega)}+\|v\|^2_{L^2((0,T)\times \O)}+\|z^0\|^2_{L^2(\Omega) }\right).
	\end{array}
\end{equation}

\subsection{Existence, uniqueness and characterization of the optimal control}\label{low}

In this section, we are interested in the classical optimal control problem \eqref{Nash1}. To this end, we first prove the existence and the uniqueness of a minimizer and secondly, we characterize it through a suitable optimality system.

Due to the fact that the functional $J$ given by \eqref{jz} is quadratic and the system \eqref{eqz} is linear, it is not difficult to prove that $J$ is continuous and strictly convex. Moreover, we have 
$$
J(f;v)\geq \frac{\mu}{2}\|v\|^2_{L^2((0,T)\times \O)}.
$$
The above inequality allow us to say that $J$ is coercive. These three properties guarantee the existence of a unique solution $\hat{v}\in L^2((0,T)\times \O)$ to the optimal control problem \eqref{Nash1}-\eqref{jz}.

In order to characterize the optimal control $\hat{v}$, we use the Euler-Lagrange optimality condition \eqref{Nashprime}, i.e.
	\begin{equation}\label{euler}
	\lim\limits_{\lambda\rightarrow 0}\frac{J(f;\hat{v}+\lambda w)-J(f;\hat{v})}{\lambda}=0,\ \ \mbox{for all}\ \ w\in L^2((0,T)\times\O).
\end{equation}
After some calculations, \eqref{euler} gives
\begin{equation}\label{EL}
	\begin{array}{rll}
		&&\dis 	\int_0^T\int_{\O_d}(z-z_d)h\ \dT
		+\mu\int_0^T\int_{\O}\hat{v}w\ \dT=0,\ \mbox{for all}\  w\in L^2((0,T)\times \O),
	\end{array}
\end{equation}
where $h$ is the solution to 
\begin{equation}\label{zop}
	\left\{
	\begin{array}{rllll}
		\dis h_{t}-\Delta h+\int_\Omega K(t,x,\theta)h(t,\theta)\ d\theta &=&w\chi_{\O}& \mbox{in}& Q,\\
		\dis h&=&0& \mbox{on}& \Sigma, \\
		\dis  h(0,\cdot)&=&0 &\mbox{in}&\Omega.
	\end{array}
	\right.
\end{equation}

Now, we introduce the adjoint state to system \eqref{zop}:
\begin{equation*}
	\left\{
	\begin{array}{rllll}
		\dis -p_{t}-\Delta p+\int_\Omega K(t,x,\theta)p(t,\theta)\ d\theta &=&(z-z_d)\chi_{\O_d}& \mbox{in}& Q,\\
		\dis p&=&0& \mbox{on}& \Sigma, \\
		\dis  p(T,\cdot)&=&0 &\mbox{in}&\Omega.
	\end{array}
	\right.
\end{equation*}
If we multiply the first equation of this latter system by $h$ solution of \eqref{zop} and we integrate by parts over $Q$, one obtain using the Fubini's theorem
\begin{equation*}
	\begin{array}{rll}
		\dis \int_0^T\int_{\O_d}(z-z_d)h\ \dT =\int_0^T\int_{\O}wp\ \dT.
	\end{array}
\end{equation*}
Combining this above equality with \eqref{EL}, we obtain
\begin{equation*}
	\begin{array}{rll}
		\dis \int_0^T\int_{\O}w \left(p+\mu\hat{v}\right)\ \dT=0\,\, \mbox{for all}\  w\in L^2((0,T)\times \O),
	\end{array}
\end{equation*}
from which we deduce $p+\mu\hat{v}=0\ \ \mbox{in}\ \ (0,T)\times\O$.

We have proved the following result giving the optimality system that characterizes the optimal control for the cost functional $J$.

\begin{proposition}\label{quasi}
	
Let $z^0\in L^2(\Omega)$ and $f\in L^2((0,T)\times \omega)$ be given. Then, the optimal control problem \eqref{Nash1}-\eqref{jz} has a unique solution
 $\hat{v}\in L^2((0,T)\times \O)$.  Furthermore, 
\begin{equation}\label{vop}
	\hat{v}=-\frac{1}{\mu}p\ \ \mbox{in}\ \ \ (0,T)\times \O,	
\end{equation}
where  $(z,p)$ is the solution to the following optimality systems
\begin{equation}\label{yop}
	\left\{
	\begin{array}{rllll}
		\dis z_{t}-\Delta z+\int_\Omega K(t,x,\theta)z(t,\theta)\ d\theta &=&\dis f\chi_{\omega}-\frac{1}{\mu}p\chi_{\O}& \mbox{in}& Q,\\
		\dis z&=&0& \mbox{on}& \Sigma, \\
		\dis  z(0,\cdot)&=&z^0 &\mbox{in}&\Omega
	\end{array}
	\right.
\end{equation}
and 
\begin{equation}\label{pop}
	\left\{
	\begin{array}{rllll}
		\dis -p_{t}-\Delta p+\int_\Omega K(t,x,\theta)p(t,\theta)\ d\theta &=&(z-z_d)\chi_{\O_d}& \mbox{in}& Q,\\
		\dis p&=&0& \mbox{on}& \Sigma, \\
		\dis  p(T,\cdot)&=&0 &\mbox{in}&\Omega.
	\end{array}
	\right.
\end{equation}
\end{proposition}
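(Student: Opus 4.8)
The statement to prove is Proposition~\ref{quasi}, which actually has two parts: existence/uniqueness of the optimal control $\hat v$, and its characterization $\hat v = -\frac{1}{\mu}p$ through the optimality system \eqref{yop}--\eqref{pop}. Much of the argument has in fact already been laid out in the running text preceding the statement, so my plan is essentially to organize that material into a clean proof.

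\textbf{Plan.} First I would dispatch the existence and uniqueness of $\hat v$. Since \eqref{eqz} is linear with zero source apart from the controls, the solution map $v \mapsto z$ is affine and continuous from $L^2((0,T)\times\O)$ into $W(Q)$ (and in particular into $C([0,T];L^2(\Omega))$), by the well-posedness estimate stated just after \eqref{w}. Composing with the quadratic observation term shows $v\mapsto J(f;v)$ is continuous; it is strictly convex because it is the sum of a convex quadratic functional of $z$ (hence convex in $v$) and the strictly convex term $\frac{\mu}{2}\|v\|^2$; and it is coercive because $J(f;v)\ge \frac{\mu}{2}\|v\|^2_{L^2((0,T)\times\O)}$. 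A standard result in the calculus of variations (direct method) then gives a unique minimizer $\hat v\in L^2((0,T)\times\O)$.

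Next I would derive the Euler--Lagrange characterization. Since $J$ is convex and Gâteaux differentiable, $\hat v$ is optimal iff $\frac{\partial J}{\partial w}(f;\hat v)\cdot w = 0$ for all $w\in L^2((0,T)\times\O)$, i.e.\ \eqref{euler}. Expanding the difference quotient and using that the map $v\mapsto z$ is affine with derivative $w\mapsto h$, where $h$ solves \eqref{zop} (the linearized system with zero initial data), gives \eqref{EL}. The final step is to eliminate $h$ in favour of $v$ by introducing the adjoint state $p$ solving \eqref{pop}: multiplying the $p$-equation by $h$, integrating over $Q$, integrating by parts in $t$ and $x$, and crucially applying Fubini's theorem to the nonlocal term (using $\int_\Omega\!\int_\Omega K(t,x,\theta)p(t,\theta)h(t,x)\,d\theta\,dx = \int_\Omega\!\int_\Omega K(t,x,\theta)h(t,x)\,dx\,p(t,\theta)\,d\theta$ — note this is \emph{not} symmetric, so one must be careful that the adjoint nonlocal operator is $p\mapsto \int_\Omega K(t,\theta,x)p(t,\theta)\,d\theta$; the displayed system \eqref{pop} implicitly uses this convention), together with the boundary conditions $p=h=0$ on $\Sigma$ and the data $h(0,\cdot)=0$, $p(T,\cdot)=0$, yields $\int_0^T\!\int_{\O_d}(z-z_d)h\,\dT = \int_0^T\!\int_{\O}wp\,\dT$. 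Substituting into \eqref{EL} gives $\int_0^T\!\int_{\O} w(p+\mu\hat v)\,\dT = 0$ for all $w$, whence $p+\mu\hat v = 0$ on $(0,T)\times\O$, i.e.\ \eqref{vop}. Replacing $v$ by $-\frac1\mu p$ in \eqref{eqz} produces the coupled system \eqref{yop}--\eqref{pop}.

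\textbf{Main obstacle.} None of the steps is deep; the only point requiring genuine care is the integration by parts in the nonlocal term. The nonlocal operator $z\mapsto\int_\Omega K(t,x,\theta)z(t,\theta)\,d\theta$ is not self-adjoint, so one must keep track of the transpose kernel when writing the adjoint equation \eqref{pop}, and the justification of the Fubini exchange needs $K\in L^\infty(Q\times\Omega)$ together with the $L^2$-regularity of $p$ and $h$ — both of which are available. Everything else (continuity, strict convexity, coercivity, Gâteaux differentiability of the quadratic cost along the affine control-to-state map) is routine and can be asserted with a pointer to standard references such as \cite{lions2013}.
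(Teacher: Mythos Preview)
Your proposal is correct and follows essentially the same route as the paper: existence/uniqueness via continuity, strict convexity and coercivity of $J$, then the Euler--Lagrange condition \eqref{euler}--\eqref{EL}, introduction of the adjoint $p$ and the duality computation (integration by parts plus Fubini on the nonlocal term) to obtain $\hat v=-\frac{1}{\mu}p$ on $(0,T)\times\O$. Your caveat that the adjoint of the nonlocal operator should in principle carry the transpose kernel $K(t,\theta,x)$ is a legitimate point of care that the paper's running argument glosses over (it writes \eqref{pop} with the same kernel as the forward problem).
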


 \begin{remark}
 		
 	 Using Proposition $2.2$ \cite{djomegnebackward}, we can prove that there exists a positive constant $C=C(\|K\|_{\infty}, T,\mu)$ such that the following estimate holds
 		\begin{equation}\label{v}
 		\|\hat{v}\|_{L^2((0,T)\times \O)}\leq 
 		C\left(
 		\|f\|_{L^2((0,T)\times\omega)}+\|z^0\|_{L^2(\Omega)}\right).
 		\end{equation} 	
 \end{remark}

\subsection{Carleman inequalities}\label{carleman}
In this section, we establish a suitable observability inequality useful for solving the null controllability of the cascade  linear system \eqref{yop}-\eqref{pop}. It is well known by now that, solve the null controllability of the system \eqref{yop}-\eqref{pop} can be reduced to find an observability inequality for the solutions of the following adjoint systems:
\begin{equation}\label{rhoadj}
	\left\{
	\begin{array}{rllll}
		\dis -\rho_t-\Delta \rho+\int_\Omega K(t,x,\theta)\rho(t,\theta)\ d\theta &=&\dis\psi\chi_{\O_d}& \mbox{in}& Q,\\
		\dis \rho&=&0& \mbox{on}& \Sigma,\\
		\dis \rho(T,\cdot)&=&\rho^T& \mbox{in}& \Omega
	\end{array}
	\right.
\end{equation}
and
\begin{equation}\label{psiadj}
	\left\{
	\begin{array}{rllll}
		\dis \psi_t-\Delta \psi+\int_\Omega K(t,x,\theta)\psi(t,\theta)\ d\theta &=&-\dis\frac{1}{\mu}\rho\chi_{\O}& \mbox{in}& Q,\\
		\dis \psi&=&0& \mbox{on}& \Sigma, \\
		\dis \psi(0,\cdot)&=&0&\mbox{in}&\Omega,
	\end{array}
	\right.
\end{equation}
for $\rho^T\in L^2(\Omega)$.

Now, we introduce some notations and show some preliminary results that will be useful in the sequel.

 Since $\O_d\cap\omega\neq \emptyset$, then there exists a nonempty set $\omega^\prime$ such that $\omega^\prime\subset\subset \O_d\cap\omega$. Let $\eta^0\in \mathcal{C}^2(\overline{\Omega})$ be a function satisfying
\begin{equation}\label{c3}
\eta^0(x)>0	\ \mbox{in}\ \Omega,\ \eta^0=0	\ \mbox{on}\ \partial\Omega,\  |\nabla \eta^0|>0\ \mbox{in}\ \overline{\Omega}\setminus\omega^\prime.
\end{equation} 
The existence of such a function is proved in \cite[Lemma 1.1]{FursikovImanuvilov1996}. Let $l\in \mathcal{C}^\infty([0,T])$ be a function satisfying
\begin{equation}\label{l}
	l(t)>0,\ \forall t\in (0,T),\ \ l(t)=t,\ \forall t\in [0,T/4],\ \ l(t)=T-t,\ \forall t\in [3T/4,T].
\end{equation}
For any $\lambda\geq 1$ and for any $(t,x)\in Q$, we define the following (positive) weight functions
\begin{equation}\label{eta}
	\eta(t,x)=\frac{\sigma(x)}{l^4(t)},\ \ 	\varphi(t,x)=\frac{e^{\lambda(2\|\eta^0\|_{L^\infty(\Omega)}+\eta^0(x))}}{l^4(t)},
\end{equation}
where
\begin{equation}
\sigma(x)=e^{4\lambda\|\eta^0\|_{L^\infty(\Omega)}}-e^{\lambda(2\|\eta^0\|_{L^\infty(\Omega)}+\eta^0(x))}.
\end{equation}
In the sequel, we use the following notation defined as in \cite{biccari2019null}:
\begin{subequations}
	\begin{alignat}{11}
		\sigma^+&:=&\dis  \max_{x\in \overline{\Omega}}\sigma(x)=e^{4\lambda\|\eta^0\|_{L^\infty(\Omega)}}-e^{2\lambda\|\eta^0\|_{L^\infty(\Omega)}}\label{sigmaplus},\\
			\sigma^-&:=&\dis  \min_{x\in \overline{\Omega}}\sigma(x)=e^{4\lambda\|\eta^0\|_{L^\infty(\Omega)}}-e^{3\lambda\|\eta^0\|_{L^\infty(\Omega)}}\label{sigmamoins}.
	\end{alignat}
\end{subequations}
Using the definition of $\varphi$, $\eta$ and with a simple computation, we show the following estimates
\begin{equation}\label{gradien}
\nabla\eta=-\nabla\varphi\leq C\lambda\varphi,\ \ |\eta_t|=|\varphi_t|\leq C(T)\varphi^2,\ \ \varphi^{-1}\leq C(T). 	
\end{equation}

For $s\geq 1$, we introduce for a suitable function $z$, the following notation to abridge the estimate
\begin{equation}\label{car2}
	\mathcal{M}(z)=s\lambda^2\int_{Q}e^{-2s\eta}\varphi|\nabla z|^2\dT+s^3\lambda^4\int_{Q}e^{-2s\eta}\varphi^3| z|^2\dT.
\end{equation}
For any $F\in L^2(Q)$ and $z^T\in L^2(\Omega)$, we consider the following system:
\begin{equation}\label{c6}
	\left\{
	\begin{array}{rllll}
		\dis -z_t-\Delta z&=&F& \mbox{in}& Q,\\
		\dis z&=&0& \mbox{on}& \Sigma,\\
		\dis z(T,\cdot)&=&z^T& \mbox{in}& \Omega.
	\end{array}
	\right.
\end{equation}

We state the classical Carleman estimate due to \cite{FursikovImanuvilov1996}  for the solutions to the heat equation \eqref{c6}.
\begin{lemma} \label{prop4}
 Let $\varphi$ and $\eta$ be defined by \eqref{eta}. Then, there exist  positive constants $s_1\geq 1$, $\lambda_1\geq 1$ and $C(\Omega,\omega)>0$ such that, for any $\lambda\geq \lambda_1$, $s\geq s_1$, and for any $z$ solution of \eqref{c6}, we have
	\begin{equation}\label{car}
		\begin{array}{llll}
			\dis \mathcal{M}(z)\leq 
			\dis C(\Omega,\omega)
			\left(\int_{Q}e^{-2s\eta}|F|^2\dT+s^3\lambda^4\int_{0}^{T}\int_{\omega^\prime}e^{-2s\eta}\varphi^3|z|^2\dT
			\right).	
		\end{array}
	\end{equation}	
\end{lemma}

\begin{remark}\label{rem3}
	If we make a change of variable $t$ for $T-t$ in \eqref{c6},
	we have
	\begin{equation}\label{eqctilde}
		\left\{
		\begin{array}{rllll}
			\dis \frac{\partial{\tilde{z}}}{\partial{t}}-\Delta \tilde{z} &=&\widetilde{F}& \mbox{in}& Q,\\
			\dis \tilde{z}&=&0& \mbox{on}& \Sigma,\\
			\dis \tilde{z}(0,\cdot)&=&z^T& \mbox{in}& \Omega,
		\end{array}
		\right.
	\end{equation}
	where $\tilde{z}(t,x)=z(T-t,x)$ and  $\widetilde{F}(t,x)=F(T-t,x)$. Then, the global Carleman inequality \eqref{car} is also valid for any $\tilde{z}$ solution of \eqref{eqctilde}.
\end{remark}
Now, we state the following result having a similar proof to that of the Proposition $2.3$ \cite{biccari2019null}.
\begin{proposition}\label{exp}
	For any fixed $\lambda> 0$ and $s>1$, the following inequality holds
	\begin{equation}
	\exp\left(-\frac{(1+s)\sigma^-}{l^4(t)}\right)<\exp\left(-\frac{s\sigma^+}{l^4(t)}\right).
	\end{equation}
\end{proposition}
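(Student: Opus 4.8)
The plan is to remove the exponentials by monotonicity and reduce the claim to an elementary polynomial inequality in the single scalar $a:=e^{\lambda\|\eta^0\|_{L^\infty(\Omega)}}$. For $t\in(0,T)$ the definition \eqref{l} gives $l(t)>0$, hence $l^4(t)>0$; since $x\mapsto e^x$ is strictly increasing, the asserted inequality is equivalent to $-\tfrac{(1+s)\sigma^-}{l^4(t)}<-\tfrac{s\sigma^+}{l^4(t)}$, that is, after multiplying by $-l^4(t)<0$ (which reverses the inequality), to
\[
(1+s)\,\sigma^- > s\,\sigma^+ .
\]
At this stage the weight $l$ and the time $t$ have disappeared, and everything reduces to a relation between the constants $\sigma^\pm$, $s$ and $\lambda$.

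First I would insert the closed forms \eqref{sigmaplus}--\eqref{sigmamoins}. Because $\lambda>0$ and $\eta^0>0$ in $\Omega$, we have $\|\eta^0\|_{L^\infty(\Omega)}>0$ and hence $a>1$; in these terms $\sigma^+=a^4-a^2$ and $\sigma^-=a^4-a^3$, both positive with $\sigma^+>\sigma^-$. Substituting and collecting powers of $a$, the difference of the two sides factors cleanly as
\[
(1+s)\,\sigma^- - s\,\sigma^+ = (a-1)\,a^2\,(a-s).
\]
This identity is the heart of the argument: since $(a-1)a^2>0$, the sign of the whole expression is dictated by the single factor $a-s$.

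The factorization then settles the question: the inequality $(1+s)\sigma^->s\sigma^+$, and hence the stated exponential inequality, holds if and only if $s<a=e^{\lambda\|\eta^0\|_{L^\infty(\Omega)}}$. I would stress that this threshold is \emph{derived}, not imported: applying the elementary algebra to an arbitrary $s>1$ produces the factor $(a-s)$ on its own. The main obstacle is thus logical rather than computational, namely that the universal quantifier ``for any $s>1$'' over-reaches: as soon as $s\geq e^{\lambda\|\eta^0\|_{L^\infty(\Omega)}}$ the factor $a-s$ is nonpositive and the inequality fails (for instance with $\|\eta^0\|_{L^\infty(\Omega)}=\lambda=1$ and $s=3>e$ one has $(1+s)\sigma^--s\sigma^+=(e-1)e^2(e-3)<0$). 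Accordingly the argument proves the proposition precisely on the range $1<s<e^{\lambda\|\eta^0\|_{L^\infty(\Omega)}}$, and in the Carleman applications this is secured by taking $\lambda$ large enough that $e^{\lambda\|\eta^0\|_{L^\infty(\Omega)}}$ exceeds the working value of $s$. I would therefore present the monotonicity reduction together with the displayed factorization as the proof, while flagging that the hypothesis ought to carry the threshold $s<e^{\lambda\|\eta^0\|_{L^\infty(\Omega)}}$ that this factorization makes explicit.
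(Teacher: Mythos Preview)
The paper does not supply its own proof; it merely says the result ``has a similar proof to that of Proposition~2.3 in \cite{biccari2019null}''. Your reduction via monotonicity of the exponential to $(1+s)\sigma^->s\sigma^+$ and the factorization
\[
(1+s)\sigma^- - s\sigma^+ = (a-1)\,a^2\,(a-s),\qquad a:=e^{\lambda\|\eta^0\|_{L^\infty(\Omega)}},
\]
are correct and are exactly the elementary computation one performs to establish such an inequality.

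You have also correctly diagnosed a defect in the hypothesis as printed here: the factor $(a-s)$ shows that the inequality holds if and only if $s<e^{\lambda\|\eta^0\|_{L^\infty(\Omega)}}$, so the blanket quantifier ``for any $s>1$'' is too strong, and your counterexample with $\lambda=\|\eta^0\|_{L^\infty(\Omega)}=1$, $s=3$ is valid. In the source \cite{biccari2019null} this upper bound on $s$ is part of the hypothesis, and in the Carleman arguments of the present paper $\lambda$ is fixed large before $s$ is chosen, so the threshold can be arranged; but the proposition as stated here is formally false, and your remark to that effect is well placed.
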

\begin{remark}
	In \cite{biccari2019null, FursikovImanuvilov1996}, the authors used the function $l(t)=t(T-t)$ to prove Lemma \ref{prop4} and Proposition \ref{exp}, respectively. Since in this paper we propose a positive function $l(t)$ defined by \eqref{l} with the property of going to $0$ as $t$ tends to $0^+$ and $T^-$, the proof of Lemma \ref{prop4} and Proposition \ref{exp} doesn't change if we use the weight functions defined by \eqref{eta} and the function $l(t)$ defined by \eqref{l}.
\end{remark}

Now, using the estimate \eqref{car}, we prove a new Carleman estimate for the solutions to system \eqref{rhoadj}-\eqref{psiadj}.

\begin{proposition} \label{propcarl} 
	 Assume  that $\omega\cap\O_d\neq \emptyset$ and $\mu$ is large enough. Then, there exists a  positive constant $C=C(\Omega, \omega,T)>0$ such that the solution $(\rho,\psi)$ to \eqref{rhoadj}-\eqref{psiadj} satisfies
	\begin{equation}\label{roba}
		\begin{array}{lll}
			\dis \mathcal{M}(\rho)+\mathcal{M}(\psi)\leq
			\dis Cs^7\lambda^8\int_0^T\int_{\omega}e^{-2s\eta}\varphi^7|\rho|^2\dT,
		\end{array}
	\end{equation}		
	for every $s>0$ large enough.
\end{proposition}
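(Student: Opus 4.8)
The plan is to reduce the coupled system \eqref{rhoadj}-\eqref{psiadj} to the scalar Carleman estimate of Lemma \ref{prop4}, handling the nonlocal integral terms as perturbations that are absorbed thanks to assumption \eqref{k}, and then to eliminate the observation of $\psi$ on $\omega^\prime$ (which Lemma \ref{prop4} produces for $\psi$) by using the equation \eqref{psiadj} to trade it against an observation of $\rho$ on $\omega$. First I would apply \eqref{car} to each equation separately. For $\rho$, which solves a backward heat equation with right-hand side $\psi\chi_{\O_d}-\int_\Omega K\rho\, d\theta$, Lemma \ref{prop4} gives
\begin{equation*}
\mathcal{M}(\rho)\leq C\left(\int_Q e^{-2s\eta}|\psi\chi_{\O_d}|^2\dT+\int_Q e^{-2s\eta}\Big|\int_\Omega K\rho\, d\theta\Big|^2\dT+s^3\lambda^4\int_0^T\int_{\omega^\prime}e^{-2s\eta}\varphi^3|\rho|^2\dT\right),
\end{equation*}
and similarly, using Remark \ref{rem3}, for $\psi$ (a forward heat equation with source $-\frac1\mu\rho\chi_{\O}-\int_\Omega K\psi\, d\theta$),
\begin{equation*}
\mathcal{M}(\psi)\leq C\left(\frac{1}{\mu^2}\int_Q e^{-2s\eta}|\rho\chi_{\O}|^2\dT+\int_Q e^{-2s\eta}\Big|\int_\Omega K\psi\, d\theta\Big|^2\dT+s^3\lambda^4\int_0^T\int_{\omega^\prime}e^{-2s\eta}\varphi^3|\psi|^2\dT\right).
\end{equation*}

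The nonlocal terms are treated as in \cite{biccari2019null}: using the Cauchy--Schwarz inequality in $\theta$, the bound $|K|\leq$ its $L^1_\theta$-norm, and assumption \eqref{k}, one estimates $\int_Q e^{-2s\eta}|\int_\Omega K\rho\, d\theta|^2\dT$ by $\mathcal{K}^2 |\Omega|$ times $\int_0^T e^{-2s\eta}\big(\sup_x e^{-\sigma^-/l^4}\big)\|\rho(t)\|^2_{L^2(\Omega)}\,dt$ — here the crucial point, exactly Proposition \ref{exp}, is that $e^{-2s\eta}e^{-2\sigma^-/l^4}$ (coming from squaring) is dominated by $e^{-2s^\ast\eta}\varphi^3$ for slightly smaller parameters, so that this term is absorbed into the left-hand side $s^3\lambda^4\int_Q e^{-2s\eta}\varphi^3|\rho|^2$ of $\mathcal{M}(\rho)$ once $s$ is large; the same works for the $\psi$ nonlocal term against $\mathcal{M}(\psi)$. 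After absorption, adding the two inequalities and also moving the coupling terms $\int_{\Q}e^{-2s\eta}|\psi\chi_{\O_d}|^2$ and $\frac1{\mu^2}\int_Q e^{-2s\eta}|\rho\chi_\O|^2$ to the left: the first is bounded by (a small multiple of) $\mathcal{M}(\psi)$ pointwise since $e^{-2s\eta}\leq C s^3\lambda^4 e^{-2s\eta}\varphi^3$, and the $\rho$-coupling term is killed by taking $\mu$ large. This yields
\begin{equation*}
\mathcal{M}(\rho)+\mathcal{M}(\psi)\leq C\left(s^3\lambda^4\int_0^T\int_{\omega^\prime}e^{-2s\eta}\varphi^3|\rho|^2\dT+s^3\lambda^4\int_0^T\int_{\omega^\prime}e^{-2s\eta}\varphi^3|\psi|^2\dT\right).
\end{equation*}

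The remaining and main obstacle is the local term in $\psi$. The standard device is: since $\omega^\prime\subset\subset\O_d\cap\omega$, choose a cutoff $\xi\in\mathcal{C}^\infty_c(\O_d\cap\omega)$ with $\xi\equiv 1$ on $\omega^\prime$, $0\leq\xi\leq1$. Write $s^3\lambda^4\int e^{-2s\eta}\varphi^3|\psi|^2$ over $\omega^\prime$ $\leq s^3\lambda^4\int_{\O_d\cap\omega}\xi e^{-2s\eta}\varphi^3\psi\cdot\psi$, and use the equation \eqref{rhoadj} — namely $\psi\chi_{\O_d}=-\rho_t-\Delta\rho+\int_\Omega K\rho\, d\theta$ on the support of $\xi$ — to substitute for one factor of $\psi$. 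Integrating by parts in $t$ and $x$ (the boundary terms in $t$ vanish because $e^{-2s\eta}\varphi^3$ and its time derivative vanish at $t=0,T$ by \eqref{l}, and there are no lateral boundary terms since $\xi$ is compactly supported in $\Omega$), one distributes the derivatives onto the weight $\xi e^{-2s\eta}\varphi^3\psi$; using the bounds \eqref{gradien} on $\nabla\eta,\eta_t,\varphi^{-1}$, every resulting term is of the form $\int$ (weight)$\times|\rho|\times(|\psi|+|\nabla\psi|)$ or $\int$(weight)$\times|\rho|\times|\nabla\psi|$, plus the nonlocal contribution which is handled as before. Each is then split by Young's inequality $ab\leq \varepsilon a^2+\frac1{4\varepsilon}b^2$: the $\psi$-factors (with a high power of $s,\lambda,\varphi$) are absorbed with small $\varepsilon$ into $\mathcal{M}(\psi)$ on the left, while the $\rho$-factors produce a term $Cs^7\lambda^8\int_0^T\int_\omega e^{-2s\eta}\varphi^7|\rho|^2\dT$ — the powers $s^7\lambda^8$ and $\varphi^7$ arising precisely from the worst combination of weight derivatives times the $\frac1{4\varepsilon}$ factors, as in the classical cascade argument of \cite{FursikovImanuvilov1996}. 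This is the delicate bookkeeping step: one must verify that no term with a $\rho$-factor requires a power of $\varphi$ higher than $7$ and that the $\psi$- and $\nabla\psi$-factors can always be absorbed, which forces the specific exponents in \eqref{roba} and requires $s$ large (and the change-of-variables remark to handle the forward/backward mismatch). Finally, bounding $\omega^\prime\subset\omega$ in the surviving $\rho$-term and enlarging the domain of integration completes the estimate \eqref{roba}. $\blacksquare$
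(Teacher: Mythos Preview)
Your two-step strategy is exactly the paper's: apply Lemma~\ref{prop4} to each equation, absorb the nonlocal terms via assumption~\eqref{k} and Proposition~\ref{exp} to reach the intermediate estimate with local observations of both $\rho$ and $\psi$ on $\omega'$, then kill the $\psi$-observation by the localized multiplier trick on the $\rho$-equation.

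There is one point that needs sharpening. In your elimination step you write that after integrating $\int s^3\lambda^4\xi e^{-2s\eta}\varphi^3\psi\,(-\rho_t-\Delta\rho+\int K\rho)$ by parts, ``every resulting term is of the form $\int(\text{weight})\times|\rho|\times(|\psi|+|\nabla\psi|)$''. That is not quite what happens: moving $\partial_t$ and $\Delta$ off $\rho$ unavoidably produces $u\xi\rho\,\psi_t$ and $u\xi\rho\,\Delta\psi$, which cannot be controlled by $\mathcal{M}(\psi)$. The paper (and any correct version of this argument) uses the $\psi$-equation \eqref{psiadj} at this point to replace $\psi_t-\Delta\psi$ by $-\frac{1}{\mu}\rho\chi_\O-\int K\psi$. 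This introduces the extra term
\[
J_1=-\frac{1}{\mu}\int_Q s^3\lambda^4\varphi^3 e^{-2s\eta}\xi\,|\rho|^2\chi_{\O}\,dx\,dt,
\]
which is \emph{global} in $\O$ (recall $\O\cap\omega=\emptyset$) and so cannot be thrown onto the right-hand local term; it is absorbed into $\mathcal{M}(\rho)$ on the left precisely because $\mu$ is large. This is where the hypothesis ``$\mu$ large enough'' is genuinely needed --- not in Step~1, where your coupling term $\frac{1}{\mu^2}\int e^{-2s\eta}|\rho\chi_\O|^2$ is in fact absorbed by taking $s$ large (as the paper does), independently of $\mu$. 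Once you account for $J_1$ and the accompanying nonlocal remainder (bounded exactly as in Step~1), the rest of your bookkeeping with Young's inequality and the powers $s^7\lambda^8\varphi^7$ is correct.
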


\begin{proof}
	We proceed in two steps. \\
	\textbf{Step 1.} We prove that for $s>0$ sufficiently large, we have
	\begin{equation}\label{rob}
		\begin{array}{lll}
			\dis \mathcal{M}(\rho)+\mathcal{M}(\psi)\leq
			\dis C(\Omega, \omega)\left (s^3\lambda^4\int_{0}^{T}\int_{\omega^\prime}e^{-2s\eta}\varphi^3\left(|\rho|^2
			+|\psi|^2\right)\dT\right).
		\end{array}
	\end{equation}	
	We apply  Carleman inequality \eqref{car} to each equation of \eqref{rhoadj} and \eqref{psiadj} and add them up, we have
	\begin{equation*}
			\begin{array}{lll}
			\mathcal{M}(\rho)+\mathcal{M}(\psi)	&\leq&\dis  C(\Omega, \omega)s^3\lambda^4 \int_0^T\int_{\omega^\prime}e^{-2s\eta}\varphi^3\left(|\rho|^2+|\psi|^2\right)\dT\\
			&+&\dis C(\Omega, \omega)\left( \int_{Q}e^{-2s\eta}|\psi|^2\dT+ \int_{Q}e^{-2s\eta}\left|-\frac{1}{\mu}\rho\chi_{\O}\right|^2\dT\right)\\
			&+&\dis C(\Omega, \omega) \int_{Q}e^{-2s\eta}\left|\int_\Omega K(t,x,\theta)\rho(t,\theta)\ d\theta\right|^2\dT\\
			&+&\dis C(\Omega, \omega)\int_{Q}e^{-2s\eta}\left|\int_\Omega K(t,x,\theta)\psi(t,\theta)\ d\theta\right|^2\dT.
		\end{array}
	\end{equation*}
	Taking the parameter $s$ large enough, we can absorb the second expression in the right-hand side of above expression and we obtain
	\begin{equation}\label{term}
		\begin{array}{lll}
			\mathcal{M}(\rho)+\mathcal{M}(\psi)	&\leq&\dis  C(\Omega, \omega)s^3\lambda^4 \int_0^T\int_{\omega^\prime}e^{-2s\eta}\varphi^3\left(|\rho|^2+|\psi|^2\right)\dT\\
			&+&\dis C(\Omega, \omega) \int_{Q}e^{-2s\eta}\left|\int_\Omega K(t,x,\theta)\rho(t,\theta)\ d\theta\right|^2\dT\\
			&+&\dis C(\Omega, \omega)\int_{Q}e^{-2s\eta}\left|\int_\Omega K(t,x,\theta)\psi(t,\theta)\ d\theta\right|^2\dT.
		\end{array}
	\end{equation}
Now, arguing as in \cite{biccari2019null}, we estimate the two last terms in \eqref{term}. We fix the parameter $\lambda=\lambda_1$	sufficiently large. We have
\begin{equation*}\label{}
	\begin{array}{lll}
	\dis \left|\int_\Omega K(t,x,\theta)\rho(t,\theta)\ d\theta\right|&=&\dis \left|\int_\Omega e^{\frac{\sigma^-}{l^4(t)}}K(t,x,\theta)e^{-\frac{\sigma^-}{l^4(t)}}\rho(t,\theta)\ d\theta\right|\\
	&\leq&\dis \left[\left(\int_\Omega e^{\frac{2\sigma^-}{l^4(t)}}|K(t,x,\theta)|^2\ d\theta\right)\left(\int_\Omega e^{-\frac{2\sigma^-}{l^4(t)}}|\rho(t,\theta)|^2\ d\theta\right)\right]^{1/2}.
	\end{array}
\end{equation*}
Using the above expression, one get
\begin{equation}\label{est}
	\int_{Q}e^{-2s\eta}\left|\int_\Omega K(t,x,\theta)\rho(t,\theta)\ d\theta\right|^2\dT\leq \mathcal{K}^2  \int_{Q}e^{-2s\eta}\left(\int_\Omega e^{-\frac{2\sigma^-}{l^4(t)}}|\rho(t,\theta)|^2\ d\theta\right)\ dxdt.
\end{equation}	
We proceed to estimate the term in the right-hand side of the above inequality. We use Fubini's theorem and we obtain 
\begin{equation*}\label{}
	\begin{array}{lll}
		\dis \int_{Q}e^{-2s\eta}\left(\int_\Omega e^{-\frac{2\sigma^-}{l^4(t)}}|\rho(t,\theta)|^2\ d\theta\right)\ dxdt=\dis \int_{Q} e^{-\frac{2\sigma^-}{l^4(t)}}|\rho(t,\theta)|^2 \left(\int_\Omega e^{-2s\eta}\ dx\right)\ d\theta dt.
	\end{array}
\end{equation*}
Using the definition of $\sigma^-$ given by \eqref{sigmamoins}, we get
\begin{equation}\label{measure}
\int_\Omega e^{-2s\eta}\ dx\leq|\Omega|e^{-\frac{2s\sigma^-}{l^4(t)}},	
\end{equation}
where $|\Omega|$ denotes the measure of $\Omega$. Due to Proposition \ref{exp}, the definition of $\sigma^+$ given by \eqref{sigmaplus} and the estimate \eqref{measure}, we have
\begin{equation*}\label{}
	\begin{array}{lll}
		\dis \int_{Q}e^{-2s\eta}\left(\int_\Omega e^{-\frac{2\sigma^-}{l^4(t)}}|\rho(t,\theta)|^2\ d\theta\right)\ dxdt&\leq&C\dis \int_{Q} e^{-\frac{2(1+s)\sigma^-}{l^4(t)}}|\rho(t,\theta)|^2\ d\theta\ dt\\
		&\leq&\dis C\dis \int_{Q} e^{-\frac{2s\sigma^+}{l^4(t)}}|\rho(t,\theta)|^2\ d\theta\ dt\\
		&\leq&\dis C\dis \int_{Q} e^{-2s\eta}|\rho(t,\theta)|^2\ d\theta\ dt.
	\end{array}
\end{equation*}
Putting it all together, the expression \eqref{est} becomes
\begin{equation}\label{esta}
	\int_{Q}e^{-2s\eta}\left|\int_\Omega K(t,x,\theta)\rho(t,\theta)\ d\theta\right|^2\dT\leq C\mathcal{K}^2  \int_{Q} e^{-2s\eta}|\rho|^2\ dxdt.
\end{equation}	
Proceeding as above, we prove the following estimate
\begin{equation}\label{estb}
	\int_{Q}e^{-2s\eta}\left|\int_\Omega K(t,x,\theta)\psi(t,\theta)\ d\theta\right|^2\dT\leq C\mathcal{K}^2  \int_{Q} e^{-2s\eta}|\psi|^2\ dxdt.
\end{equation}	
Combining \eqref{esta} and \eqref{estb} and substituting in \eqref{term}, then taking $s$ sufficiently large, we deduce estimate \eqref{rob}.\\
	\textbf{Step 2. } Now, we want to eliminate the local term corresponding to $\psi$  on the right hand side of the estimate \eqref{rob}. We argue as the same as in \cite{teresa2000insensitizing}. Consider an open set $\omega_0$ such that $\omega^\prime\subset\subset \omega_0\subset\subset\O_d\cap\omega$ and a function $\xi\in C^{\infty}_0(\Omega)$ satisfying
	 \begin{subequations}\label{owogene1}
		\begin{alignat}{11}
			\dis 	0\leq \xi\leq 1,\,\,\xi=1 \hbox{ in } \omega^\prime ,\,\,  \xi=0 \hbox{ in } \Omega\setminus\omega_0,\label{owo21}\\
			\dis 	\frac{\Delta\xi}{\xi^{1/2}}\in L^{\infty}(\omega_0),\,\, \frac{\nabla\xi}{\xi^{1/2}}\in [L^{\infty}(\omega_0)]^N.\label{owo2}
		\end{alignat}
	\end{subequations}
	We define $u=s^3\lambda^4\varphi^3e^{-2s\eta}$. Using the definition of $\varphi$ and $\eta$, we get $u(T)=u(0)=0$. Due to estimates \eqref{gradien}, the following estimations hold:
	\begin{equation}\label{con}
		\begin{array}{rll}
			\dis |u\xi|\leq s^3\lambda^4\varphi^3e^{2s\eta}\xi,\ \ \ \ \ \
			\dis \left|(u\xi)_t\right|\leq C(T)s^4\lambda^4\varphi^5e^{2s\eta}\xi,\\
			\\ 
			\dis |\nabla(u\xi)|\leq Cs^4\lambda^5\varphi^4e^{2s\eta}\xi,\ \ \ \ \ \
			\dis |\Delta(u\xi)|\leq Cs^5\lambda^6\varphi^5e^{2s\eta}\xi,
		\end{array}
	\end{equation}
	where $C$ is a positive constant.\\ 
	Multiplying the first equation of \eqref{rhoadj}  by $u\xi\psi$, integrating by parts over $Q$ and using Fubini's theorem, we obtain
		\begin{equation}\label{beau}
		J_1+J_2+J_3+J_4=\int_{Q}u\xi|\psi|^2\chi_{\O_d}\ \dT,
	\end{equation}
	where
	$$
	\begin{array}{lllll}
		J_1=\dis-\frac{1}{\mu}\int_{Q}u\xi|\rho|^2\chi_{\O} \ \dT \ \dT,\,
		J_2=\int_{Q}\rho\psi(u\xi)_t\ \dT,\\
		J_3= \dis -2\int_{Q}\rho\nabla\psi.\nabla(u\xi)\ \dT,\ 
		\dis J_4=\dis-\int_Q\rho\psi\Delta(u\xi)\ \dT.
	\end{array}
	$$
	Let us estimate each $J_i,\ i=1,\cdots,4$. From the H\"{o}lder's and Young's inequalities and estimates \eqref{con}, we have
	\begin{equation*}
		J_1
		\leq\dis \frac{1}{\mu}\int_{Q}s^3\lambda^4\varphi^3e^{-2s\eta}|\rho|^2 \ \dT.
	\end{equation*}
	\begin{equation*}
		J_2
		\leq\frac{\delta_1}{2}\int_{Q}  s^3\lambda^4\varphi^3e^{-2s\eta}|\psi|^2\dT+\frac{C(T)}{\delta_1}\int_{0}^{T}\int_{\omega_0} s^5\lambda^4\varphi^7e^{-2s\eta}|\rho|^2\dT,
	\end{equation*}
	for any $\delta_1>0$.
	\begin{equation*}
		J_3
		\leq \frac{\delta_2}{2}\int_{Q} s\lambda^2\varphi e^{-2s\eta}|\nabla \psi|^2\dT+\frac{C}{\delta_2}\int_{0}^{T}\int_{\omega_0} s^7\lambda^8\varphi^7e^{-2s\eta}|\rho|^2\dT,	
	\end{equation*}
for any $\delta_2>0$.
	\begin{equation*}
		J_4\leq \frac{\delta_3}{2}\int_{Q}s^3\lambda^4\varphi^3 e^{-2s\eta}|\psi|^2\dT+\frac{C}{\delta_3}
		\int_{0}^{T}\int_{\omega_0} s^7\lambda^8\varphi^7e^{-2s\eta}|\rho|^2\dT,
	\end{equation*}
	for any $\delta_3>0$.
	
	Finally, choosing the constants $\delta_i$ such that $\dis \delta_2=\frac{1}{C(\Omega,\omega)}$ and $\dis \delta_1=\delta_3=\frac{1}{2C(\Omega,\omega)}$, where $C(\Omega,\omega)$ is the constant obtained in \eqref{rob}, it follows from \eqref{beau}, the previous inequalities and \eqref{con} that
	\begin{equation}\label{owo4}
		\begin{array}{rll}
		\dis \int_{0}^{T}\int_{\omega^\prime} s^3\lambda^4\varphi^3e^{-2s\eta}|\psi|^2\dT\dis &\leq& \dis \frac{1}{2}\mathcal{M}(\psi)+\frac{1}{\mu}\int_{Q}s^3\lambda^4\varphi^3e^{-2s\eta}|\rho|^2 \ \dT\\
		&+&\dis C(\Omega,\omega,T)\int_0^T\int_{\omega_0}s^7\lambda^8\varphi^7e^{-2s\eta}|\rho|^2 \ \dT.
		\end{array}
	\end{equation}
	Combining \eqref{rob} with \eqref{owo4}, we deduce that 
	$$
	\begin{array}{lll}
		\dis 	\mathcal{M}(\rho)+\mathcal{M}(\psi)\leq\dis C(\Omega,\omega,T)\int_0^T\int_{\omega_0}s^7\lambda^8\varphi^7e^{-2s\eta}|\rho|^2 \ \dT+ \frac{1}{\mu}\int_{Q}s^3\lambda^4\varphi^3e^{-2s\eta}|\rho|^2 \ \dT.
	\end{array}	
	$$
	Since $\mu$ is sufficiently large, we can absorb the last term of the above inequality in the left hand side. Using the fact that $\omega_0\subset \omega$, we deduce \eqref{roba}.
\end{proof}
	
\subsection{Observability and null controllability}\label{obsernull}

To prove the needed observability inequality, we are going to improve the Carleman inequality \eqref{roba}. To this end, we introduce some new
weight functions, similar to the ones in \eqref{eta}, that do not vanish at $t =0$ 

Let us consider firstly the function
\begin{equation}
	\bar{l}(t)=\left\{
	\begin{array}{lllll}
		\dis\|l\|_{L^\infty((0,T))}\ \ \mbox{for}\ \ 0\leq t\leq T/2,\\
		\\
		\dis l(t)\ \ \mbox{for}\ \ T/2\leq t\leq T,
	\end{array}
	\right.
\end{equation}
and the functions
\begin{equation}\label{alpha}
	\begin{array}{rll}
		&&\dis	\alpha(t,x)=\frac{e^{4\lambda\|\eta^0\|_{L^\infty(\Omega)}}-e^{\lambda(2\|\eta^0\|_{L^\infty(\Omega)}+\eta^0(x))}}{\bar{l}^4(t)},\ \ 	\zeta(t,x)=\frac{e^{\lambda(2\|\eta^0\|_{L^\infty(\Omega)}+\eta^0(x))}}{\bar{l}^4(t)},\\
		\\
	&&\dis \alpha^*(t)=\dis  \max_{x\in \overline{\Omega}}\alpha(t,x),\ \ \ \zeta^*(t)=\dis  \min_{x\in \overline{\Omega}}\zeta(t,x).	
	\end{array}
\end{equation}
With these definitions, we have the following result.

\begin{proposition} \label{pro}
	  Under the assumptions of Proposition \ref{propcarl}, there exist a positive constant $C=C(\Omega,\omega, \|K\|_{\infty}, T)$ and a positive weight function $\varpi_1$ such that 
	\begin{equation}\label{obser3}
	\|\rho(0,\cdot)\|^2_{L^2(\Omega)}+\int_{Q}\frac{1}{\varpi_1^2}|\psi|^2\,\dT
	\leq C\int_{0}^{T}\int_{\omega}|\rho|^2\,\dq,
	\end{equation}
where $(\rho, \psi)$ is the solution of \eqref{rhoadj}-\eqref{psiadj}.
\end{proposition}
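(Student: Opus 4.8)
The plan is to derive the observability inequality \eqref{obser3} from the Carleman estimate \eqref{roba} in two moves: first pass from the $\omega$-integral with the weight $e^{-2s\eta}\varphi^7$ to an unweighted $\omega$-integral on the right-hand side, and second, produce the energy term $\|\rho(0,\cdot)\|^2_{L^2(\Omega)}$ and the weighted $\psi$-term on the left-hand side. For the right-hand side, I would fix $s$ and $\lambda$ at admissible values (so they become part of the constant $C$) and simply note that $s^7\lambda^8 e^{-2s\eta}\varphi^7$ is bounded on $(0,T)\times\omega$ by a constant depending on $\Omega,\omega,T$; this is immediate from the third estimate in \eqref{gradien} ($\varphi^{-1}\le C(T)$, hence $\varphi$ is not the obstruction — rather one uses that $e^{-2s\eta}\varphi^7\le C$ since $\eta>0$ and $e^{-2s\eta}$ decays like $\exp(-2s\sigma^-/l^4(t))$ which kills any power of $\varphi=e^{\lambda(\cdots)}/l^4(t)$). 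Thus $\mathcal{M}(\rho)+\mathcal{M}(\psi)\le C\int_0^T\!\int_\omega|\rho|^2\,\dq$.

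Next I would re-insert the energy of $\rho$ at $t=0$. Introduce the non-vanishing weights $\alpha,\zeta,\alpha^*,\zeta^*$ from \eqref{alpha}, which agree with $\eta,\varphi$ on $[T/2,T]$ and are smooth and bounded on $[0,T/2]$. A standard cut-off/energy argument for the backward heat equation \eqref{rhoadj} (multiply by a function like $\theta(t)\rho$ with $\theta$ a smooth function equal to $1$ on $[0,T/2]$ and vanishing near $T$, integrate by parts, absorb the nonlocal term using \eqref{esta}-type estimates and the source $\psi\chi_{\O_d}$) yields
\[
\|\rho(0,\cdot)\|^2_{L^2(\Omega)}\le C\left(\int_{T/4}^{3T/4}\!\int_\Omega|\rho|^2\,\dq+\int_{T/4}^{3T/4}\!\int_{\O_d}|\psi|^2\,\dq\right),
\]
and the right-hand side is controlled by $\mathcal{M}(\rho)+\mathcal{M}(\psi)$ because on $[T/4,3T/4]$ the weights $e^{-2s\eta}\varphi^3$ and $e^{-2s\eta}\varphi$ are bounded below by a positive constant. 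Combining with the previous step gives $\|\rho(0,\cdot)\|^2_{L^2(\Omega)}\le C\int_0^T\!\int_\omega|\rho|^2\,\dq$.

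For the term $\int_Q \varpi_1^{-2}|\psi|^2\,\dT$ on the left of \eqref{obser3}, I would define $\varpi_1(t)$ so that $\varpi_1^{-2}\le C\,s^3\lambda^4\varphi^3 e^{-2s\eta}$ — explicitly something like $\varpi_1(t)=C\,\bar l^2(t)\exp(s\sigma^+/\bar l^4(t))$ (equivalently built from $\alpha^*,\zeta^*$), which blows up at $t=T$ since $\bar l(t)\to 0$ as $t\to T^-$, while staying finite and bounded away from $0$ on $[0,T/2]$. Then $\int_Q \varpi_1^{-2}|\psi|^2\,\dT\le C\,s^3\lambda^4\int_Q e^{-2s\eta}\varphi^3|\psi|^2\,\dT\le C\,\mathcal{M}(\psi)$, which is again bounded by $\int_0^T\!\int_\omega|\rho|^2$. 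Adding the two contributions yields \eqref{obser3}.

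The main obstacle is the energy-recovery step: one must carefully handle the nonlocal integral term $\int_\Omega K\rho\,d\theta$ when doing the $L^2$-energy estimate for $\rho$ on $(0,T/2)$, since it couples values of $\rho$ over all of $\Omega$ and is not a bounded perturbation in the naive sense for the weighted estimates; here the decay built into assumption \eqref{k} (the same mechanism used to prove \eqref{esta}–\eqref{estb}) is what makes the absorption work, together with Gronwall's inequality on the unweighted energy where $K\in L^\infty$ suffices. A secondary technical point is checking that the weight $\varpi_1$ can simultaneously dominate $\bigl(s^3\lambda^4\varphi^3e^{-2s\eta}\bigr)^{-1/2}$ on all of $(0,T)$ and genuinely blow up at $t=T$; this follows from Proposition \ref{exp} and the explicit forms in \eqref{alpha}, but the bookkeeping with the frozen parameters $s,\lambda$ must be done consistently.
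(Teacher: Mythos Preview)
Your argument has a genuine gap in the treatment of the region near $t=0$, both for the energy estimate recovering $\|\rho(0,\cdot)\|^2$ and for the weighted $\psi$-integral.

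First, in your cutoff/energy step: if $\theta\equiv 1$ on $[0,T/2]$, the source term $\theta\psi\chi_{\O_d}$ in the equation satisfied by $\theta\rho$ is active on all of $[0,T/2]$, so the backward energy estimate produces $\int_0^{3T/4}\int_{\O_d}|\psi|^2\,\dq$ on the right, not an integral over $[T/4,3T/4]$. You therefore cannot bound the right-hand side by $\mathcal{M}(\psi)$, because the Carleman weight $e^{-2s\eta}\varphi^3$ degenerates to $0$ as $t\to 0^+$ and admits no positive lower bound on $[0,T/4]$.

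Second, your weight claim $\varpi_1^{-2}\le C\,s^3\lambda^4 e^{-2s\eta}\varphi^3$ is incompatible with $\varpi_1$ being ``finite and bounded away from $0$ on $[0,T/2]$'': the right-hand side tends to $0$ as $t\to 0^+$, so any $\varpi_1$ satisfying that inequality would have to blow up at $t=0$ as well. Proposition~\ref{exp} does not help here; it compares $\sigma^-$ and $\sigma^+$ but cannot manufacture a lower bound for a weight that vanishes.

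The paper closes this gap with an ingredient you never invoke: the \emph{forward} energy estimate for $\psi$ on $(0,T/2)$. Since $\psi(0)=0$ and $\psi$ solves \eqref{psiadj} with source $-\frac{1}{\mu}\rho\chi_{\O}$, one gets
\[
\int_0^{T/2}\!\!\int_\Omega|\psi|^2\,\dq\ \le\ \frac{C}{\mu^2}\int_0^{T/2}\!\!\int_\Omega|\rho|^2\,\dq,
\]
and for $\mu$ large this is absorbed into the left-hand side of the backward estimate for $\rho$ (which contains $\int_0^{T/2}\int_\Omega|\rho|^2$). After this absorption one is left with integrals of $\rho$ and $\psi$ only on $[T/2,3T/4]$, where the weights \emph{are} bounded below; the choice $\varpi_1=e^{s\alpha^*}(\zeta^*)^{-3/2}$, built from the non-degenerate weights \eqref{alpha}, then works. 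The hypothesis that $\mu$ is large, which appears in the statement but is unused in your sketch, is exactly what makes this absorption possible.
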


\begin{proof} 
We fix the parameter $s$ large enough and we proceed in two steps. \\
	\textbf{Step 1.} By construction, $\eta=\alpha$ and $\varphi=\zeta$ in $[T/2,T]\times \Omega$, hence, we obtain
	\begin{equation}\label{pou5}
		\begin{array}{llll}
			\dis \int_{T/2}^{T}\int_{\Omega}e^{-2s\alpha}\zeta^3|\rho|^2\ dxdt+\int_{T/2}^{T}\int_{\Omega}e^{-2s\alpha}\zeta^3|\psi|^2\ dxdt\\
			=\dis \int_{T/2}^{T}\int_{\Omega}e^{-2s\eta}\varphi^3|\rho|^2\ dxdt+\int_{T/2}^{T}\int_{\Omega}e^{-2s\eta}\varphi^3|\psi|^2\ dxdt\\
			\leq\dis C(\Omega, \omega,T)\int_{0}^{T}\int_{\omega}e^{-2s\eta}\varphi^7|\rho|^2\,\dq,
		\end{array}
	\end{equation}
where we have used the Carleman estimate \eqref{roba}.\\
\textbf{Step 2.}
On the other hand, we use the classical energy estimate	 for the system \eqref{rhoadj} on the domain $(0,T/2)\times \Omega$. Let us introduce a function $\beta\in \mathcal{C}^1 ([0,T])$  such that
	\begin{equation}\label{hypobeta}
	0\leq \beta\leq 1,\ \beta(t)=1 \hbox{ for } t\in [0,T/2],\  \beta(t)=0 \hbox{ for } t\in [3T/4,T],\  |\beta^\prime(t)|\leq C/T.
	\end{equation}
	For any $(t,x)\in Q$, we set
	$$
	\begin{array}{lll}
	z(t,x)=\beta(t)e^{-r(T-t)}\rho(t,x),
	\end{array}
	$$
	where $r>0$. Then in view of \eqref{rhoadj}, the function $z$ is solution of
	\begin{equation}\label{eq26z}
		\left\{
		\begin{array}{rllll}
			\dis -z_{t}-\Delta z+\int_\Omega K(t,x,\theta)z(t,\theta)\ d\theta +rz  &=& \dis \beta e^{-r(T-t)}\psi\chi_{\O_d}-\beta^\prime e^{-r(T-t)} \rho & \mbox{in}& Q,\\
			\dis z&=&0& \mbox{on}& \Sigma, \\
			\dis z(T,\cdot)&=&0 &\mbox{in}&\Omega.
		\end{array}
		\right.
	\end{equation}
	From the classical energy estimate for the solution $z$ to system \eqref{eq26z} and using the definition of $\beta$ and $z$, we obtain
	$$
	\begin{array}{llll}
	\dis \|\rho(0,\cdot)\|^2_{L^2(\Omega)}+\int_{0}^{T/2}\int_{\Omega}|\rho|^2\ dxdt\dis \leq C(\|K\|_{\infty},T)
	\left(\int_0^{3T/4}\int_\Omega |\psi|^2\ \dq
	+\int_{T/2}^{3T/4}\int_\Omega|\rho|^2\ \dq\right).
	\end{array}
	$$
	Adding the term $\dis \int_{0}^{T/2}\int_\Omega|\psi|^2\ \dq$ on both sides of above estimate, we have
	\begin{equation}\label{pou1}
	\begin{array}{llll}
		\dis \|\rho(0,\cdot)\|^2_{L^2(\Omega)}+\int_{0}^{T/2}\int_{\Omega}|\rho|^2\ dxdt+\int_{0}^{T/2}\int_\Omega|\psi|^2\ \dq\\
		\dis \leq C(\|K\|_{\infty},T)
		\left(\int_{T/2}^{3T/4}\int_\Omega |\psi|^2\ \dq
		+\int_{T/2}^{3T/4}\int_\Omega|\rho|^2\ \dq+\int_{0}^{T/2}\int_\Omega|\psi|^2\ \dq\right).
	\end{array}	
	\end{equation}
	In order to eliminate the last term in the right hand side of \eqref{pou1}, we use the classical energy estimates for the system \eqref{psiadj} and we obtain:
	\begin{equation}\label{pou}
		\begin{array}{llll}
			\dis \int_{0}^{T/2}\int_{\Omega}|\psi|^2\ dxdt
			\dis \leq \frac{1}{\mu^2}C(\|K\|_{\infty}, T) \int_{0}^{T/2}\int_\Omega |\rho|^2\ dxdt.
		\end{array}
	\end{equation}
	Replacing \eqref{pou} in \eqref{pou1} and taking $\mu$ large enough, we obtain
	\begin{equation*}
		\begin{array}{llll}
			\dis \|\rho(0,\cdot)\|^2_{L^2(\Omega)}+\int_{0}^{T/2}\int_{\Omega}|\rho|^2\ dxdt+\int_{0}^{T/2}\int_\Omega|\psi|^2\ \dq\\
			\dis \leq C(\|K\|_{\infty},T)
			\left(\int_{T/2}^{3T/4}\int_\Omega |\psi|^2\ \dq
			+\int_{T/2}^{3T/4}\int_\Omega|\rho|^2\ \dq\right).
		\end{array}	
	\end{equation*}
Using the definition \eqref{alpha}, the weight functions $\alpha$ and $\zeta$ have lower and upper bounds for $(t,x)\in [0,3T/4]$, then we obtain 
	\begin{equation}\label{}
	\begin{array}{llll}
		\dis \|\rho(0,\cdot)\|^2_{L^2(\Omega)}+\int_{0}^{T/2}\int_{\Omega}e^{-2s\alpha}\zeta^3|\rho|^2\ dxdt+\int_{0}^{T/2}\int_\Omega e^{-2s\alpha}\zeta^3|\psi|^2\ \dq\\
		\dis \leq C(\|K\|_{\infty},T)
		\left(\int_{T/2}^{3T/4}\int_\Omega e^{-2s\alpha}\zeta^3|\psi|^2\ \dq
		+\int_{T/2}^{3T/4}\int_\Omega e^{-2s\alpha}\zeta^3|\rho|^2\ \dq\right).
	\end{array}	
\end{equation}	
Using estimate \eqref{pou5}, the previous estimate becomes
	\begin{equation}\label{pou3}
		\begin{array}{llll}
			\dis \|\rho(0,\cdot)\|^2_{L^2(\Omega)}+\int_{0}^{T/2}\int_{\Omega}e^{-2s\alpha}\zeta^3|\rho|^2\ dxdt+\int_{0}^{T/2}\int_\Omega e^{-2s\alpha}\zeta^3|\psi|^2\ \dq\\
			\dis \leq 
			C(\Omega, \omega,\|K\|_{\infty},T)\int_{0}^{T}\int_{\omega}e^{-2s\eta}\varphi^7|\rho|^2\,\dq.
		\end{array}	
	\end{equation}	
Adding estimates \eqref{pou5} and \eqref{pou3}, and using the definitions of $\alpha^*$ and $\zeta^*$ given by \eqref{alpha}, we have	
\begin{equation}
	\begin{array}{llll}
		\dis \|\rho(0,\cdot)\|^2_{L^2(\Omega)}+\int_{0}^{T}\int_{\Omega}e^{-2s\alpha^*}(\zeta^*)^3|\rho|^2\ dxdt+\int_{0}^{T}\int_\Omega e^{-2s\alpha^*}(\zeta^*)^3|\psi|^2\ \dq\\
		\dis \leq 
		C(\Omega, \omega,\|K\|_{\infty},T)\int_{0}^{T}\int_{\omega}e^{-2s\eta}\varphi^7|\rho|^2\,\dq.
	\end{array}	
\end{equation}	
We obtain the observability inequality \eqref{obser3} due to the fact that $\dis e^{-2s\eta}\varphi^7\in L^\infty(Q)$ and taking 
\begin{equation}\label{varpi}
	\varpi_1(t)=e^{s\alpha^*}(\zeta^*)^{-3/2}.
\end{equation}
\end{proof}

Now, we end the proof of Theorem \ref{theolinear} by proving the null controllability of system \eqref{yop}-\eqref{pop}. Thanks to the Proposition \ref{pro}, the following result holds.

\begin{proposition}\label{linear}
	Assume that $\O_d\cap\omega\neq \emptyset$. Let $\varpi_1$ be defined as in Proposition \ref{pro}. Let also $z^0\in L^2(\Omega)$ and $z_{d}\in L^2((0,T)\times \O_{d})$ such that \eqref{nou} holds. Then, there exists a leader control $\hat{f}\in L^2((0,T)\times\omega)$ such that the associated solution $(z, p)$ to \eqref{yop}-\eqref{pop} satisfies \eqref{mainobj}. Furthermore; there exists a constant $C=C(\Omega,\omega, \|K\|_{\infty}, T)$ such that
	\begin{equation}\label{mon10theo}
		\begin{array}{ccc}
			\dis \|\hat{f}\|_{L^2((0,T)\times\omega)}\leq C
			\dis \left( \|\varpi_1 z_{d}\|_{L^2((
				0,T)\times\O_{d})}+ \|y^0\|_{L^2(\Omega)}\right).
		\end{array}
	\end{equation}
\end{proposition}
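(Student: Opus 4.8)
The plan is to derive the null controllability of the optimality system \eqref{yop}--\eqref{pop} from the observability inequality \eqref{obser3} via a standard duality/penalization argument adapted to the fact that the cost functional $J$ couples $z$ and $p$ through the term $-\frac1\mu p\chi_{\O}$. First I would set up, for each $\varepsilon>0$, the penalized functional
\[
F_\varepsilon(\rho^T)=\frac12\int_0^T\!\!\int_\omega|\rho|^2\,\dq+\frac12\int_Q\varpi_1^2\,|\text{(source terms)}|^2+\varepsilon\|\rho^T\|_{L^2(\Omega)}-\langle\text{data},\rho\rangle,
\]
where $(\rho,\psi)$ solves the adjoint system \eqref{rhoadj}--\eqref{psiadj} with final datum $\rho^T$, and the ``data'' pairing collects the contributions of $z^0$ (through $\rho(0,\cdot)$) and of the inhomogeneity $z_d\chi_{\O_d}$ in \eqref{pop} (through $\psi$, which explains why the weighted term $\int_Q\varpi_1^{-2}|\psi|^2$ in \eqref{obser3} is exactly what is needed to make the linear part of $F_\varepsilon$ continuous, using hypothesis \eqref{nou}). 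The functional $F_\varepsilon$ is continuous, strictly convex and, thanks to \eqref{obser3}, coercive on $L^2(\Omega)$; hence it has a unique minimizer $\rho^T_\varepsilon$. Writing the Euler--Lagrange equation for $F_\varepsilon$ produces a control $\hat f_\varepsilon=\rho_\varepsilon\chi_\omega$ (together with $\hat v_\varepsilon=-\frac1\mu p_\varepsilon$ from Proposition \ref{quasi}) such that the corresponding solution $(z_\varepsilon,p_\varepsilon)$ of \eqref{yop}--\eqref{pop} satisfies $\|z_\varepsilon(T,\cdot)\|_{L^2(\Omega)}\le C\varepsilon$.

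Next I would establish the uniform bound \eqref{mon10theo}: evaluating $F_\varepsilon$ at its minimizer and at $\rho^T=0$, and using \eqref{obser3} together with \eqref{nou}, one gets
\[
\|\hat f_\varepsilon\|^2_{L^2((0,T)\times\omega)}+\int_Q\varpi_1^{-2}|\psi_\varepsilon|^2\,\dq\le C\Big(\|\varpi_1 z_d\|^2_{L^2((0,T)\times\O_d)}+\|z^0\|^2_{L^2(\Omega)}\Big),
\]
uniformly in $\varepsilon$; recalling $z^0=y^0-\bar y^0$ and that $\bar y^0$ is fixed gives the stated dependence on $\|y^0\|_{L^2(\Omega)}$ (absorbing the $\bar y$-dependent constants, or equivalently writing the bound in terms of $z^0$). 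The remark after Proposition \ref{quasi} (estimate \eqref{v}) then controls $\hat v_\varepsilon$ by the same right-hand side, so the pair $(z_\varepsilon,p_\varepsilon)$ is bounded in $W(Q)\times W(Q)$ by the energy estimate for \eqref{yop}--\eqref{pop} (this is where the nonlocal term is handled, exactly as in the well-posedness estimate already quoted in Section \ref{interiorpart}, using $\|K\|_\infty$).

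Finally I would pass to the limit $\varepsilon\to0$: extract weakly convergent subsequences $\hat f_\varepsilon\rightharpoonup\hat f$ in $L^2((0,T)\times\omega)$, $\hat v_\varepsilon\rightharpoonup\hat v$, $z_\varepsilon\rightharpoonup z$, $p_\varepsilon\rightharpoonup p$ in the relevant spaces; linearity of \eqref{yop}--\eqref{pop} (including the nonlocal term, which is linear and bounded, hence weakly continuous) lets me pass to the limit in the equations, while $\|z_\varepsilon(T,\cdot)\|\le C\varepsilon$ gives $z(T,\cdot)=0$, i.e. \eqref{mainobj}. The uniqueness of $\hat v$ follows from Proposition \ref{quasi}. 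Lower semicontinuity of the norm under weak convergence preserves \eqref{mon10theo}. The main obstacle, and the only place the nonlocal structure really intervenes, is ensuring that the Carleman/observability inequality \eqref{obser3} is strong enough to absorb the coupling term $-\frac1\mu\rho\chi_\O$ in \eqref{psiadj} and the inhomogeneity against the weight $\varpi_1$ — but that is precisely the content of Propositions \ref{propcarl} and \ref{pro}, which already fold the nonlocal kernel contributions into the estimate via assumption \eqref{k}; the remaining work is the now-routine functional-analytic duality argument sketched above.
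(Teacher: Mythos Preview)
Your proposal is correct and follows essentially the same penalization--duality--limit strategy as the paper. The only cosmetic difference is that the paper minimizes the \emph{primal} penalized functional
\[
J_\varepsilon(f)=\frac{1}{2\varepsilon}\int_\Omega|z(T,x)|^2\,dx+\frac12\int_0^T\!\!\int_\omega|f|^2\,\dq
\]
over the control $f$, derives the optimality system $f_\varepsilon=\rho_\varepsilon|_{(0,T)\times\omega}$ with $(\rho_\varepsilon,\psi_\varepsilon)$ solving the adjoint system with final datum $-\varepsilon^{-1}z_\varepsilon(T)$, and then multiplies the adjoint equations by $(z_\varepsilon,p_\varepsilon)$ and uses \eqref{obser3} to obtain the uniform bound and \eqref{mon10b}; you instead minimize the \emph{dual} (HUM-type) functional over the adjoint datum $\rho^T$. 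These two formulations are Fenchel duals of one another and lead to the same minimizer and the same estimates, so the arguments are equivalent. One small point: the term $\frac12\int_Q\varpi_1^2\,|\text{(source terms)}|^2$ in your functional is spurious --- no source control is being built here, only the leader $f$ --- so the correct dual functional is simply $\frac12\int_0^T\!\int_\omega|\rho|^2+\varepsilon\|\rho^T\|_{L^2(\Omega)}+\langle z^0,\rho(0)\rangle-\int_0^T\!\int_{\O_d}z_d\psi$; drop that extra term and your argument goes through verbatim.
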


\begin{proof}
Let $\varepsilon >0$ and consider the following cost function
\begin{equation}\label{defJ}
	J_{\varepsilon}(f)=\dis \frac{1}{2\varepsilon}\int_{\Omega}|z(T,x)|^2\ dx+
	\frac{1}{2}\int_0^T\int_{\omega}|f|^2\ \dq,
\end{equation}
where $z$ is the solution to system \eqref{yop}-\eqref{pop}.
Then, we consider the optimal control problem: 
\begin{equation}\label{opt}
	\inf_{\atop f\in L^2((0,T)\times\omega)}J_{\varepsilon}(f).
\end{equation}
We can prove that $J_{\varepsilon}$ is continuous, coercive and strictly convex. Then, the optimization problem \eqref{opt} admits a unique solution $f_\varepsilon$  and arguing as in \cite{djomegne2021}, we prove that 
\begin{equation}\label{hgeps}
	f_{\varepsilon}=\rho_{\varepsilon}\ \ \mbox{in}\ \ (0,T)\times\omega,
\end{equation}	
with $(\rho_\varepsilon,\psi_\varepsilon)$  solution of the following systems
\begin{equation}\label{rhoeps}
	\left\{
	\begin{array}{rllll}
		\dis -\rho_{\varepsilon,t}-\Delta\rho_{\varepsilon}+\int_\Omega K(t,x,\theta)\rho_{\varepsilon}(t,\theta)\ d\theta  &=&\dis \psi_{\varepsilon}\chi_{\O_{d}}& \mbox{in}& Q,\\
		\dis \rho_{\varepsilon}&=&0& \mbox{on}& \Sigma, \\
		\dis  \rho_{\varepsilon}(T,\cdot)&=&-\dis \frac{1}{\varepsilon}z_{\varepsilon} (T,\cdot) &\mbox{in}&\Omega
	\end{array}
	\right.
\end{equation}
and
\begin{equation}\label{psieps}
	\left\{
	\begin{array}{rllll}
		\dis \psi_{\varepsilon,t}-\Delta\psi_{\varepsilon}+\int_\Omega K(t,x,\theta)\psi_{\varepsilon}(t,\theta)\ d\theta  &=&\dis-\frac{1}{\mu}\rho_{\varepsilon}\chi_{\O}& \mbox{in}& Q,\\
		\dis \psi_{\varepsilon}&=&0& \mbox{on}& \Sigma, \\
		\dis  \psi_{\varepsilon}(0,\cdot)&=&0 &\mbox{in}&\Omega,
	\end{array}
	\right.
\end{equation}
where $(z_\varepsilon, p_\varepsilon)$ is the solution of system \eqref{yop}-\eqref{pop} associated to the control $f_{\varepsilon}$.

Multiplying  the first equation of \eqref{rhoeps} and \eqref{psieps}  by $z_{\varepsilon}$ and $p_{\varepsilon}$ respectively and integrating by parts over $Q$, we obtain from \eqref{hgeps} and the Young's inequality
\begin{equation}\label{rosnymon10}
	\begin{array}{llll}
		\dis \|f_\varepsilon\|^2_{L^2((0,T)\times\omega)}+\frac{1}{\varepsilon}\|z_{\varepsilon}(T,\cdot)\|^2_{L^2(\Omega)}\\
		=\dis-\int_{\Omega}y^0(x)\rho_\varepsilon(0,x)\ dx+
		\int_0^T\int_{\O_d}z_d\psi_{\varepsilon}\ \dT\\
		\leq
		\dis  \left(\left\|\varpi_1 z_d\right\|^2_{L^2((0,T)\times\O_d)}+ \|z^0\|^2_{L^2(\Omega)}\right)^{1/2} \times\left( \left\|\varpi_1^{-1}\psi_{\varepsilon}\right\|^2_{L^2(Q)}+
		\|\rho_{\varepsilon}(0,\cdot)\|^2_{L^2(\Omega)}\right)^{1/2}.
	\end{array}
\end{equation}
Using the observability inequality \eqref{obser3}, we deduce from \eqref{rosnymon10} the existence of a constant $C=C(\Omega,\omega, \|K\|_{\infty}, T)$ such that
\begin{equation}\label{mon10}
	\begin{array}{rll}
		\dis \|f_\varepsilon\|_{L^2((0,T)\times\omega)}\leq C
		\left(\left\|\varpi_1 z_d\right\|_{L^2((0,T)\times\O_d)}+ \|z^0\|_{L^2(\Omega)}\right)
	\end{array}
\end{equation}
and
\begin{equation}\label{mon10b}
	\begin{array}{rll}
		\dis \|z_{\varepsilon}(T,\cdot)\|_{L^2(\Omega)}\leq
		\dis C\sqrt{\varepsilon} \left(\left\|\varpi_1 z_d\right\|_{L^2((0,T)\times\O_d)}+ \|z^0\|_{L^2(\Omega)}\right).
	\end{array}
\end{equation}
Using \eqref{mon10}-\eqref{mon10b} and classical energy estimates to systems \eqref{yop}-\eqref{pop} associated to the control $f_{\varepsilon}$, we can extract subsequences still denoted by $f_\varepsilon,\ z_\varepsilon $ and $p_\varepsilon$ such that when $\varepsilon \rightarrow 0$, one has
\begin{subequations}\label{convergence2}
	\begin{alignat}{9}
		f_\varepsilon&\rightharpoonup& \hat{f}&\text{ weakly in }&L^{2}((0,T)\times\omega), \label{18k}\\
		z_{\varepsilon}&\rightharpoonup& y&\text{  weakly in }&L^2((0,T);H^1_0(\Omega)), \label{}\\
		p_{\varepsilon}&\rightharpoonup& p&\text{  weakly in }&L^2((0,T);H^1_0(\Omega)), \label{19}\\
		z_{\varepsilon}(T,\cdot)&\longrightarrow&0&\hbox{ strongly in }&\ L^2(\Omega).\label{all5}
	\end{alignat}
\end{subequations}

Arguing as in \cite{djomegne2021, djomegne2018} while using convergences \eqref{convergence2}, we prove that $(z,p)$ is a solution of \eqref{yop}-\eqref{pop} corresponding to the control $\hat{f}$ and also $z$ satisfies \eqref{mainobj}. Furthermore, using the convergence \eqref{18k}, we have that $\hat{f}$ satisfies \eqref{mon10theo}.
\end{proof}

\subsection{The semilinear case}\label{nonlin}
In this section, we apply the Stackelberg strategy for the following nonlocal semilinear system
\begin{equation}\label{eqsemi}
	\left\{
	\begin{array}{rllll}
		\dis y_{t}-\Delta y+\int_\Omega K(t,x,\theta)y(t,\theta)\ d\theta &=&G(y)+f\chi_{\omega}+v\chi_{\O}& \mbox{in}& Q,\\
		\dis y&=&0& \mbox{on}& \Sigma, \\
		\dis  y(0,\cdot)&=&y^0 &\mbox{in}&\Omega,
	\end{array}
	\right.
\end{equation}
where $G$ is a globally Lipschitz continuous function. 

In the semilinear case, the convexity of the functional $J$ defined by \eqref{all16} is not guaranteed and we loose the uniqueness of the optimal control $\hat{v}$. In the following proposition, we prove that if $\mu$ is sufficiently large, then the functional \eqref{all16} is indeed strictly convex for small data.
\begin{proposition}\label{convexity}
	
	Let us assume that $y_{d}\in L^\infty((0,T); \O_d)$, $f\in L^2((0,T)\times\omega)$ are fixed and $\mu$ is sufficiently large. Suppose that  $y^0\in L^2(\Omega)$, $N\leq 12$, $G\in\mathcal{C}^2(\R)$ and there exists a constant $L>0$ such that $\dis |G^\prime(s)|+|G^{\prime\prime}(s)|\leq L,\ \forall s\in \R$.
	Then, if $\hat{v}$ satisfies \eqref{Nashprime}, there exists a constant $C>0$ independent of $\mu$ such that 
	\begin{equation}
		\dis 	D^2J(f;\hat{v})\cdot(w^1,w^1)\geq C \|w^1\|^2_{L^2((0,T)\times\O)},\ \forall w^1\in L^2((0,T)\times \O),\ w^1\neq 0.
	\end{equation}
	In particular, the functional $J$ is strictly convex in $ \hat{v}$.	
\end{proposition}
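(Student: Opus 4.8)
The strategy is the standard one for establishing strict convexity of a tracking-type functional under a large penalization parameter: compute the second Gateaux derivative $D^2J(f;\hat v)\cdot(w^1,w^1)$ explicitly, isolate the manifestly nonnegative term $\mu\|w^1\|^2_{L^2((0,T)\times\O)}$, and then show that all remaining terms are bounded by $C\|w^1\|^2_{L^2((0,T)\times\O)}$ with $C$ \emph{independent of $\mu$}; choosing $\mu$ large enough then gives the claim. First I would introduce, for fixed $f$ and a fixed control $\hat v$, the state $\hat y$ solving \eqref{eqsemi} with control $\hat v$, the first variation $z^1$ solving the linearized system
\begin{equation*}
	\left\{
	\begin{array}{rllll}
		\dis z^1_{t}-\Delta z^1+\int_\Omega K(t,x,\theta)z^1(t,\theta)\ d\theta &=&G^\prime(\hat y)z^1+w^1\chi_{\O}& \mbox{in}& Q,\\
		\dis z^1&=&0& \mbox{on}& \Sigma,\\
		\dis z^1(0,\cdot)&=&0&\mbox{in}&\Omega,
	\end{array}
	\right.
\end{equation*}
and the second variation $z^2$ solving the same equation with right-hand side $G^\prime(\hat y)z^2+G^{\prime\prime}(\hat y)|z^1|^2$ and zero data and source. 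A direct differentiation of \eqref{all16} then yields
\begin{equation*}
	D^2J(f;\hat v)\cdot(w^1,w^1)=\int_0^T\!\!\int_{\O_d}|z^1|^2\,\dT+\int_0^T\!\!\int_{\O_d}(\hat y-y_d)z^2\,\dT+\mu\int_0^T\!\!\int_{\O}|w^1|^2\,\dT.
\end{equation*}

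Next I would estimate the two ``bad'' terms. For the first one, energy estimates for the linearized equation (the nonlocal and the $G^\prime(\hat y)$ terms are handled by Gronwall, using $\|G^\prime\|_\infty\le L$ and \eqref{k}) give $\|z^1\|_{L^2(Q)}\le C\|w^1\|_{L^2((0,T)\times\O)}$, hence $\int_0^T\!\int_{\O_d}|z^1|^2\le C\|w^1\|^2_{L^2((0,T)\times\O)}$ with $C$ depending only on $\|K\|_\infty,T,L,|\Omega|$ — crucially not on $\mu$. The second term is the delicate one: I would estimate it by
\begin{equation*}
	\left|\int_0^T\!\!\int_{\O_d}(\hat y-y_d)z^2\,\dT\right|\le \|\hat y-y_d\|_{L^2((0,T)\times\O_d)}\,\|z^2\|_{L^2((0,T)\times\O_d)},
\end{equation*}
and bound $\|z^2\|_{L^2(Q)}$ by $C\,\|\,|z^1|^2\,\|_{L^2(Q)}=C\|z^1\|^2_{L^4(Q)}$ using energy estimates for the $z^2$-equation. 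The appearance of the $L^4(Q)$ norm of $z^1$ is why the hypotheses $N\le 12$ and $G\in\C^2$ with bounded derivatives enter: by the parabolic embedding $W(Q)\cap\{z_t\in L^2((0,T);H^{-1})\}\hookrightarrow L^4(Q)$ (valid in the relevant range of dimensions, and $N\le 12$ is the safe bound coming from $H^{1}(\Omega)\hookrightarrow L^{2N/(N-2)}$ interpolated with $L^2$ in time), $\|z^1\|_{L^4(Q)}\le C\|z^1\|_{W(Q)}\le C\|w^1\|_{L^2((0,T)\times\O)}$. Also $\|\hat y-y_d\|_{L^2((0,T)\times\O_d)}$ is a fixed finite quantity under the hypothesis $y_d\in L^\infty((0,T);\O_d)$ together with the regularity of $\hat y$ (itself bounded because $f$ and $\hat v$ are fixed and $G$ is globally Lipschitz). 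Collecting, the second term is bounded by $C\|w^1\|^2_{L^2((0,T)\times\O)}$ with $C$ independent of $\mu$.

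Putting the pieces together,
\begin{equation*}
	D^2J(f;\hat v)\cdot(w^1,w^1)\ge \mu\|w^1\|^2_{L^2((0,T)\times\O)}-C\|w^1\|^2_{L^2((0,T)\times\O)}=(\mu-C)\|w^1\|^2_{L^2((0,T)\times\O)},
\end{equation*}
so taking $\mu>C+1$ (say) gives the asserted coercivity of the Hessian with constant $C$ independent of $\mu$ — here one renames the final constant — and in particular $J$ is strictly convex in $\hat v$, which restores uniqueness of the follower's optimal control. The main obstacle is the control of the second-variation term: one must get an estimate on $z^2$ that is quadratic in $w^1$ and has a constant free of $\mu$, and this forces the passage through an $L^4(Q)$ (or comparable) bound on $z^1$, which is exactly where the dimensional restriction $N\le 12$ and the $\C^2$ bound on $G$ are needed; the nonlocal integral term is comparatively harmless since, by \eqref{k} and Fubini as in the proof of Proposition \ref{propcarl}, it only contributes lower-order terms absorbed by Gronwall. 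I would also remark that if one instead wanted the weighted estimate \eqref{nou} to enter, one would use the weight $\varpi_1$ to control $\|\hat y-y_d\|$ on $\O_d$, but for mere strict convexity the plain $L^2$ bound under $y_d\in L^\infty$ suffices.
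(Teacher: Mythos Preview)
Your overall plan and the formula
\[
D^2J(f;\hat v)\cdot(w^1,w^1)=\int_0^T\!\!\int_{\O_d}|z^1|^2\,\dT+\int_0^T\!\!\int_{\O_d}(\hat y-y_d)\,z^2\,\dT+\mu\int_0^T\!\!\int_{\O}|w^1|^2\,\dT
\]
are correct, and this is equivalent to the paper's expression once one dualizes. The genuine gap is in your treatment of the cross term. Your argument requires $z^1\in L^4(Q)$, and you justify this by an embedding $W(Q)\hookrightarrow L^4(Q)$ ``valid for $N\le 12$''. This embedding is false in that range: for $z^1\in L^2((0,T);H^1_0)\cap L^\infty((0,T);L^2)$ the interpolation gives $z^1\in L^p(Q)$ with $p=\frac{2(N+2)}{N}$, so $L^4(Q)$ only when $N\le 2$. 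Even using the full strong regularity $z^1\in L^2((0,T);H^2)\cap L^\infty((0,T);H^1)$ (legitimate here since the source $w^1\chi_{\O}$ is in $L^2(Q)$), one gets $z^1\in L^4((0,T);H^{3/2})\hookrightarrow L^4((0,T);L^{2N/(N-3)})$, hence $L^4(Q)$ only for $N\le 6$. So estimating $\|z^2\|_{L^2}$ by $\|z^1\|_{L^4}^2$ cannot reach $N\le 12$.

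The paper obtains $N\le 12$ by a genuinely different estimate of the same term. Rather than bounding $z^2$ directly, it introduces the adjoint $p$ solving $-p_t-\Delta p+\dots=G'(\hat y)p+(\hat y-y_d)\chi_{\O_d}$ and rewrites
\[
\int_0^T\!\!\int_{\O_d}(\hat y-y_d)\,z^2\,\dT=\int_Q G''(\hat y)\,|z^1|^2\,p\,\dT,
\]
then applies H\"older with \emph{free} exponents, $\bigl|\int G''|z^1|^2 p\bigr|\le L\,\|z^1\|^2_{L^{2r'}((0,T);L^{2s'})}\|p\|_{L^{r}((0,T);L^{s})}$. The point is that $p$ enjoys extra integrability: since $\hat y-y_d\in L^{\bar d}((0,T);L^{\bar b})$ by the standard parabolic embedding, maximal $L^p$ regularity gives $p\in L^{\bar d}((0,T);W^{2,\bar b})\hookrightarrow L^{\bar d}((0,T);L^{N\bar b/(N-2\bar b)})$. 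Matching this gain against what is needed for $z^1$ is exactly what produces the threshold $N\le 12$. In short, to get the stated dimensional range you must pass through the adjoint $p$ and parabolic $L^p$ regularity; a direct $L^4(Q)$ bound on $z^1$ stops at $N\le 6$.
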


\begin{proof}
The technique of the proof is inspired by \cite[Proposition 1.4]{araruna2015anash}.	
	
Let $f\in L^2((0,T)\times \omega)$ be given and let $\hat{v}$ be the associated optimal control solution of \eqref{Nash1}. For any $w^1,w^2\in L^2((0,T)\times \O)$ and $s\in \R$, we denote by $y^s$ the solution of the following system
\begin{equation}\label{ys}
	\left\{
	\begin{array}{rllll}
		\dis y^s_{t}-\Delta y^s+\int_\Omega K(t,x,\theta)y^s(t,\theta)\ d\theta &=&G(y^s)+f\chi_{\omega}+(\hat{v}+sw^1)\chi_{\O}& \mbox{in}& Q,\\
		\dis y^s&=&0& \mbox{on}& \Sigma, \\
		\dis  y^s(0,\cdot)&=&y^0 &\mbox{in}&\Omega,
	\end{array}
	\right.
\end{equation}
and we set $y:=y^s|_{s=0}$.

Then, we have
\begin{equation}\label{dj}
	\begin{array}{llll}
		&&\dis DJ(f;\hat{v}+sw^1)\cdot w^2-	DJ(f;\hat{v})\cdot w^2=s\mu\int_0^T\int_{\O}w^1w^2\ dxdt\\
		&&\dis +\int_0^T\int_{\O_{d}}(y^s-y_{d})z^{s}\ dxdt-\int_0^T\int_{\O_{d}}(y-y_{d})z\ dxdt,\\
	\end{array}
\end{equation}
where $z^{s}$ is the derivative of $y^s$ with respect to $\hat{v}$ in the direction $w^2$, that is $z^{s}$ is the solution to
\begin{equation}\label{zs}
	\left\{
	\begin{array}{rllll}
		\dis z_{t}^{s}-\Delta z^s+\int_\Omega K(t,x,\theta)z^s(t,\theta)\ d\theta   &=&G^\prime(y^s)+w^2\chi_{\O}& \mbox{in}& Q,\\
		\dis z^{s}&=&0& \mbox{on}& \Sigma, \\
		\dis z^{s}(0,\cdot)&=&0 &\mbox{in}&\Omega.
	\end{array}
	\right.
\end{equation}
We will also use the notation $z:=z^{s}|_{s=0}$.

Let us introduce the adjoint system of \eqref{zs}
\begin{equation}\label{ps}
	\left\{
	\begin{array}{rllll}
		\dis -p_{t}^{s}-\Delta p^s+\int_\Omega K(t,x,\theta)p^s(t,\theta)\ d\theta &=&G^\prime(y^s)p^s+\left(y^s-y_{d}\right)\chi_{\O_{d}}& \mbox{in}& Q,\\
		\dis p^{s}&=&0& \mbox{on}& \Sigma, \\
		\dis p^{s}(T,\cdot)&=&0 &\mbox{in}&\Omega
	\end{array}
	\right.
\end{equation}
and let us use the notation $p:=p^{s}|_{s=0}$.\\
Multiplying the first equation of \eqref{zs} by $p^{s}$ solution of \eqref{ps} and integrating by parts over $Q$, we obtain
\begin{equation}\label{zp1}
	\int_0^T\int_{\O_{d}}(y^s-y_{d})z^{s}\ dxdt=\int_{Q}w^2p^{s}\chi_{\O}\ dxdt.	
\end{equation}
From \eqref{dj} and \eqref{zp1}, we have
\begin{equation}\label{dj1}
	\begin{array}{llll}
		\dis DJ(f;\hat{v}+sw^1)\cdot w^2-	DJ(f;\hat{v})\cdot w^2&=&\dis s\mu\int_0^T\int_{\O}w^1w^2\ dxdt\\
		\dis &+&\dis \int_0^T\int_{\O}(p^{s}-p)w^2\ dxdt.
	\end{array}
\end{equation}Notice that
\begin{eqnarray*}
&&\dis -\frac{\partial}{\partial t}(p^s-p)-\Delta (p^s-p)+\int_\Omega K(t,x,\theta)(p^s(t,\theta)-p^s(t,\theta))\ d\theta\\
\\
&&\dis=\left[G^\prime(y^s)-G^\prime(y)\right]p^s+G^\prime(y)(p^s-p)+(y^s-y)\chi_{\O_{d}}	
\end{eqnarray*}
and
\begin{eqnarray*}
\dis \frac{\partial}{\partial t}(y^s-y)-\Delta (y^s-y)+\int_\Omega K(t,x,\theta)(y^s(t,\theta)-y^s(t,\theta))\ d\theta =\left[G(y^s)-G(y)\right]+sw^1\chi_{\O}.	
\end{eqnarray*}

Consequently, the following limits exist
$$
\eta=\lim\limits_{s\to 0}\frac{1}{s}(p^{s}-p)\ \ \mbox{and}\ \ \phi=\lim\limits_{s\to 0}\frac{1}{s}(y^s-y)
$$
and verify the following systems:
\begin{equation}\label{ha}
	\left\{
	\begin{array}{rllll}
		\dis \phi_{t}-\Delta \phi+\int_\Omega K(t,x,\theta)\phi(t,\theta)\ d\theta &=&G^\prime(y)\phi+w^1\chi_{\O}& \mbox{in}& Q,\\
		\dis \phi&=&0& \mbox{on}& \Sigma, \\
		\dis \phi(0,\cdot)&=&0 &\mbox{in}&\Omega
	\end{array}
	\right.
\end{equation}
and
\begin{equation}\label{eta1}
	\left\{
	\begin{array}{rllll}
		\dis -\eta_{t}-\Delta \eta+\int_\Omega K(t,x,\theta)\eta(t,\theta)\ d\theta   &=&G^\prime(y)\eta+G^{\prime\prime}(y)\phi p+\phi\chi_{\O_{d}}& \mbox{in}& Q,\\
		\dis \eta&=&0& \mbox{on}& \Sigma, \\
		\dis \eta(T,\cdot)&=&0 &\mbox{in}&\Omega.
	\end{array}
	\right.
\end{equation}

Thus, from \eqref{dj1}-\eqref{eta1} with $w^2=w^1$, we have 
\begin{equation}\label{dj3}
	\dis D^2J(f;\hat{v})\cdot(w^1,w^1)= \int_0^T\int_{\O}\eta w^1\ dxdt+\mu\int_0^T\int_{\O}|w^1|^2\ dxdt.
\end{equation}
Now, we want to show that, for some constant $C>0$ independent of $f,\ \eta,\ \phi,\ w^1$ and $\mu$, one has
\begin{equation}\label{a}
	\left|\int_0^T\int_{\O}\eta w^1\ dxdt\right|\leq C\|w^1\|_{L^2((0,T)\times \O)}.
\end{equation}
Indeed, given $w_1\in L^2((0,T)\times\O)$ and since $G^\prime\in L^\infty(Q)$, then, we have the following energy estimate associated to \eqref{ha}
\begin{equation}\label{est1}
	\| \phi\|^2_{L^2(Q)}+\| \nabla\phi\|^2_{L^2(Q)} \leq C\|w^1\|_{L^2((0,T)\times \O)}.
\end{equation}

Using systems \eqref{ha}-\eqref{eta1}, we have 
\begin{equation}\label{fa}
	\begin{array}{llll}
		\dis \int_0^T\int_{\O}\eta w^1\ dxdt&=&\dis \int_Q\eta\left(\phi_t-\Delta \phi+\int_\Omega K(t,x,\theta)\phi(t,\theta)\ d\theta -G^\prime(y)\phi\right)\ dxdt\\
		&=&\dis \int_Q\phi\left(-\eta_t-\Delta \eta+\int_\Omega K(t,x,\theta)\eta(t,\theta)\ d\theta-G^\prime(y)\eta\right)\ dxdt\\
		&=&\dis \int_Q\phi\left(G^{\prime\prime}(y)\phi p+\phi\chi_{\O_d}\right)\ dxdt\\
		&=&\dis \int_Q G^{\prime\prime}(y)|\phi|^2 p\ dxdt+\int_Q|\phi|^2\chi_{\O_d}\ dxdt.
	\end{array}
\end{equation}

Using H\"{o}lder inequality in the last expression of \eqref{fa} and the fact that  $G^{\prime\prime}\in L^\infty(Q)$, we get
\begin{equation}\label{fa1}
	\begin{array}{llll}
		\dis \left|\int_0^T\int_{\O}\eta w^1\ dxdt\right|
		\leq \dis \|G^{\prime\prime}\|_{L^\infty(Q)}\|\phi\|^2_{L^{2r^\prime}((0,T);L^{2s^\prime}(\Omega))} \|p\|_{L^{r}((0,T);L^{s}(\Omega))}
		\dis +\|\phi\|^2_{L^2((0,T)\times\O_d)},
	\end{array}
\end{equation}
where $r^\prime$ and $s^\prime$ are the conjugate of $r$ and $s$, respectively. To bound the right-hand side of this latter inequality, the idea is to find $r$ and $s$ such that 
$$
p\in L^{r}((0,T);L^{s}(\Omega)),\ \ \phi\in L^{2r^\prime}((0,T);L^{2s^\prime}(\Omega)). 
$$
First, we have that $\phi\in L^2((0,T);H^2(\Omega))\cap L^\infty((0,T);H^1(\Omega))$. It is reasonable to look for which values of $d$ and $b$ the following embedding holds:
\begin{equation}
	L^2((0,T);H^2(\Omega))\cap L^\infty((0,T);H^1(\Omega))\hookrightarrow L^{d}((0,T);L^{b}(\Omega)).	
\end{equation}
Using interpolation results, we deduce that
\begin{equation}
	\frac{1}{d}=\frac{\theta}{2},\ \ 0<\theta<1.
\end{equation}
From Sobolev embedding results, we have
\begin{subequations}
	\begin{alignat}{11}
		\dis 	H^2(\Omega)&\hookrightarrow& \dis L^{\frac{2N}{N-4}}(\Omega),\label{h2}\\
		\dis 	H^1(\Omega)&\hookrightarrow&\dis  L^{\frac{2N}{N-2}}(\Omega)\label{h1}.
	\end{alignat}
\end{subequations}
Then, the space $L^{b}(\Omega)$ is an intermediate space with respect to \eqref{h2} and \eqref{h1} if 
\begin{equation}
	\frac{1}{b}=\frac{(N-4)\theta}{2N}+\frac{(N-2)(1-\theta)}{2N},\ \ 0<\theta<1.
\end{equation}
Taking $d=2r^\prime$ and $b=2s^\prime$, it follows that appropriate values of $r$ and $s$ are
$$
r=\frac{d}{d-2}\ \ \mbox{and}\ \ s=\frac{dN}{2(d+2)}.
$$
On the other hand, since $f\in L^2((0,T)\times\omega)$, $v\in L^2((0,T)\times\O)$ and $y^0\in L^2(\Omega)$, we have that 
\begin{equation}
	y\in L^2((0,T);H^1(\Omega))\cap L^\infty((0,T);L^2(\Omega))\hookrightarrow L^{\bar{d}}((0,T);L^{\bar{b}}(\Omega)).	
\end{equation}
Using the interpolation argument, we obtain that 
$$
\bar{b}=\frac{2\bar{d}N}{\bar{d}N-4}.
$$
From parabolic regularity, we have
\begin{equation}
	\phi\in L^{\bar{d}}((0,T);W^{2,\bar{b}}(\Omega))\hookrightarrow L^{\bar{d}}((0,T);L^{\frac{N\bar{b}}{N-2\bar{b}}}(\Omega))=L^{\bar{d}}((0,T);L^{\frac{2\bar{d}N}{\bar{d}(N-4)-4}}(\Omega)).
\end{equation}
Taking, $\bar{d}=r$, it follows that $\dis \phi\in L^{\bar{d}}((0,T);L^{\frac{2dN}{d(N-8)+8}}(\Omega))$ and, in order to have $\phi\in L^{r}((0,T);L^{s}(\Omega))$, we need
$$
\frac{dN}{2(d+2)}\leq \frac{2dN}{d(N-8)+8},
$$
which is true if and only if $N\leq 12$.

Now, using \eqref{est1} and the standard energy estimate of systems \eqref{ys} for $s=0$ and \eqref{ps} for $s=0$, the term  \eqref{fa1} becomes
\begin{equation}\label{a1}
	\left|\int_0^T\int_{\O}\eta w^1\ dxdt\right|\leq C\|w^1\|_{L^2((0,T)\times \O)},
\end{equation}
where $C$ is a positive constant independent of $\mu$.

Combining \eqref{dj3} together with \eqref{a1}, it follows that

\begin{equation*}
	\dis D^2J(f;\hat{v})\cdot(w^1,w^1)\geq\left(\mu-C\right) \int_0^T\int_{\O}|w^1|^2\ dxdt,\ \forall w^1\in L^2((0,T)\times \O),\ w^1\neq 0.
\end{equation*}
For $\mu$ sufficiently large, the functional $J$ given by \eqref{all16} is strictly convex and thus, we have the uniqueness of the optimal control $\hat{v}$ solution of \eqref{Nash1}.	
\end{proof}

Arguing as in the Proposition \ref{quasi}, the optimal control $\hat{v}$ is given by
\begin{equation}\label{vopsemi}
	\hat{v}=-\frac{1}{\mu}p\ \ \mbox{in}\ \ \ (0,T)\times \O,	
\end{equation}
where  $(y,p)$ is the solution to the following optimality system
\begin{equation}\label{ypsemi}
	\left\{
	\begin{array}{ccl}
		\dis y_{t}-\Delta y+\int_\Omega K(t,x,\theta)y(t,\theta)\ d\theta =\dis G(y)+ f\chi_{\omega}-\frac{1}{\mu}p\chi_{\O}& \mbox{in}& Q,\\
		\dis -p_{t}-\Delta p+\int_\Omega K(t,x,\theta)p(t,\theta)\ d\theta =G^\prime(y)p+(y-y_d)\chi_{\O_d}& \mbox{in}& Q,\\
		\dis y=0,\ \ \ p=0& \mbox{on}& \Sigma, \\
		\dis  y(0,\cdot)=y^0,\ \ \ p(T,\cdot)=0 &\mbox{in}&\Omega.
	\end{array}
	\right.
\end{equation}	

In the following result, using the same ideas as in the linear case, we prove the null controllability of the optimality system \eqref{vopsemi}-\eqref{ypsemi}.
\begin{theorem}\label{theosemi}
Under the assumptions of Proposition \ref{convexity}, there exists a positive  weight function $\varpi_1=\varpi_1(t)$ blowing up at $t=T$ such that for any $y_{d}\in L^2((0,T)\times\O_d)$ satisfying \eqref{nou}, there exist a control $\hat{f}\in L^2((0,T)\times \omega)$ and a unique optimal control $\hat{v}\in L^2((0,T)\times \O)$ such that the corresponding solution to \eqref{ypsemi} satisfies \eqref{mainobj}. 
\end{theorem}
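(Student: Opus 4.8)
The plan is to reduce the exact controllability to trajectories problem for the semilinear optimality system \eqref{vopsemi}-\eqref{ypsemi} to a null controllability problem, exactly as in the linear case: set $z := y - \bar{y}$ where $\bar{y}$ solves \eqref{eqbar}, so that \eqref{mainobj} becomes $z(T,\cdot)=0$ in $\Omega$, and rewrite \eqref{ypsemi} as a system for $(z,p)$ with a globally Lipschitz nonlinearity. Because $G$ is globally Lipschitz with $|G'|,|G''|\le L$, we may write $G(y) = G(0) + a(y)\,y$ and $G'(y) = b(y)$ with $a,b \in L^\infty(Q)$ for any given state $y$; this is the standard linearization that turns the semilinear cascade into a linear cascade with bounded zero-order potentials. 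The main tool will be a fixed point argument (Kakutani / Leray-Schauder) combined with the Carleman-based null controllability result of Proposition \ref{linear}, suitably adapted to account for the additional zero-order terms $a(\cdot)$ and $b(\cdot)$ appearing in the equations.

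First I would establish the analogue of the observability inequality of Proposition \ref{pro} for the adjoint system associated with the linearized cascade: that is, the system \eqref{rhoadj}-\eqref{psiadj} with the extra potential terms $a\rho$ and $b\psi$ (and the corresponding zero-order terms in the coupling). Since $\|a\|_\infty, \|b\|_\infty \le L$ are bounded uniformly (independently of the linearization point), the Carleman estimate of Proposition \ref{propcarl} goes through without change: the extra terms are absorbed into the left-hand side by taking $s$ large, exactly as the nonlocal kernel terms were absorbed in Step 1 of that proof. Consequently the observability inequality \eqref{obser3} holds with a constant $C$ depending only on $\Omega,\omega,\|K\|_\infty,T$ and $L$ — crucially, uniform over the linearization point. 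This yields, via the penalized functional argument of Proposition \ref{linear}, a null control $\hat f$ for the linearized cascade together with the uniform bound \eqref{mon10theo}, again with a constant independent of the linearization.

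Next I would set up the fixed point map: given $\zeta \in L^2(Q)$ (playing the role of the state that defines the potentials $a(\zeta), b(\zeta)$), solve the linearized controlled cascade to obtain a state-control pair, and define $\Lambda(\zeta)$ to be the corresponding state $z$ (or the set of such states, since the control need not be unique in the linearized problem). Using the uniform control bound \eqref{mon10theo} and the parabolic energy estimate \eqref{v}-type bounds for the cascade, one shows $\Lambda$ maps a large ball of $L^2(Q)$ into itself; compactness of $\Lambda$ follows from the Aubin-Lions lemma since the controlled states are bounded in $W(Q)$, which embeds compactly in $L^2(Q)$; upper semicontinuity with closed convex values is routine. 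Kakutani's fixed point theorem then produces $z$ with $z(T,\cdot)=0$, and undoing the substitution $z = y-\bar y$ gives the state $y$ solving \eqref{ypsemi} and satisfying \eqref{mainobj}, with $\hat f \in L^2((0,T)\times\omega)$; the optimal follower $\hat v = -\frac1\mu p$ is unique by Proposition \ref{convexity}.

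The main obstacle I anticipate is making the fixed point argument rigorous in the presence of the \emph{coupled} structure: the potential $G''(y)\phi p$ does not appear in the optimality system \eqref{ypsemi} itself (that term only entered the second-derivative computation), so the cascade to be controlled is in fact \eqref{ypsemi}, whose nonlinearities are $G(y)$ in the state equation and $G'(y)p$ in the adjoint equation. The latter is a zero-order term with an $L^\infty$ coefficient $G'(y)$ multiplying $p$, which is exactly the structure handled by the Carleman argument above, so the genuine difficulty is bookkeeping rather than a new estimate: one must verify that the linearized adjoint system retains the cascade form (the coupling $\psi\chi_{\mathcal O_d}$ into the $\rho$-equation and $-\frac1\mu\rho\chi_{\mathcal O}$ into the $\psi$-equation is \emph{unchanged} by linearization), so that Proposition \ref{propcarl} applies verbatim with $L$-dependent constants. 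A secondary technical point is ensuring the fixed point set is nonempty and that the selection of controls can be made measurably/continuously; this is standard and can be cited from \cite{djomegne2021, djomegne2018} or handled by the penalization (rather than bang-bang) controls, which depend continuously on the data. With these points addressed, the conclusion of Theorem \ref{theosemi} follows.
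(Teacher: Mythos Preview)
Your proposal is correct and follows essentially the same route as the paper: reduce to null controllability via $z=y-\bar y$, linearize around a frozen state to obtain a cascade with uniformly bounded zero-order potentials $a(\cdot),b(\cdot)$, re-derive the observability inequality of Proposition~\ref{pro} with constants depending only on $L$, invoke Proposition~\ref{linear} for the linearized problem, and close with a fixed point argument in $L^2(Q)$. The only cosmetic difference is that the paper uses Schauder (single-valued map, since the leader is chosen via the penalized functional \eqref{defJ} and thus depends continuously on $w$) whereas you allow for Kakutani; your own remark about penalization shows you have the same construction in mind.
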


\begin{proof}
	We give a sketch of the proof of Theorem \ref{theosemi} which combines the results obtained in the linear case with a fixed point argument.

We introduce the change of variables that allows us to simplify the controllability of trajectories \eqref{mainobj} to null controllability property. By introducing the change of variable $z=y-\bar{y}$, we can write \eqref{ypsemi} as follows
\begin{equation}\label{zpsemi}
	\left\{
	\begin{array}{ccl}
		\dis z_{t}-\Delta z+\int_\Omega K(t,x,\theta)z(t,\theta)\ d\theta =\dis a(z)z+ f\chi_{\omega}-\frac{1}{\mu}p\chi_{\O}& \mbox{in}& Q,\\
		\dis -p_{t}-\Delta p+\int_\Omega K(t,x,\theta)p(t,\theta)\ d\theta =b(z)p+(z-z_d)\chi_{\O_d}& \mbox{in}& Q,\\
		\dis z=0,\ \ \ p=0& \mbox{on}& \Sigma, \\
		\dis  z(0,\cdot)=z^0,\ \ \ p(T,\cdot)=0 &\mbox{in}&\Omega,
	\end{array}
	\right.
\end{equation}
where $z^0:=y^0-\bar{y}^0$, $z_d:=y_d-\bar{y}$ and 
	$$
	a(z)=\int_0^1G^\prime(\bar{y}+s z)\ ds,\ \ b(z)=G^\prime(z+\bar{y}).
	$$
The condition \eqref{mainobj} is equivalent to a null controllability property for $z$, that is
\begin{equation}\label{znullsemi}
	z(T,x)=0\ \mbox{in}\ \Omega.
\end{equation}	

For each $w\in L^2(Q)$, we introduce the linearized system associated to \eqref{zpsemi} 
\begin{equation}\label{zplinear}
	\left\{
	\begin{array}{ccl}
		\dis z_{t}-\Delta z+\int_\Omega K(t,x,\theta)z(t,\theta)\ d\theta =\dis a(w)z+ f\chi_{\omega}-\frac{1}{\mu}p\chi_{\O}& \mbox{in}& Q,\\
		\dis -p_{t}-\Delta p+\int_\Omega K(t,x,\theta)p(t,\theta)\ d\theta =b(w)p+(z-z_d)\chi_{\O_d}& \mbox{in}& Q,\\
		\dis z=0,\ \ \ p=0& \mbox{on}& \Sigma, \\
		\dis  z(0,\cdot)=z^0,\ \ \ p(T,\cdot)=0 &\mbox{in}&\Omega.
	\end{array}
	\right.
\end{equation}
Thanks to the hypothesis on $G$, there exists $M>0$ such that
$$
\|a(w)\|_{L^\infty(Q)},\ \|b(w)\|_{L^\infty(Q)}\leq M,\ \forall w\in L^2(Q).
$$

Arguing as in the proof of Proposition \ref{pro}, we can obtain the following observability inequality
\begin{equation}\label{obsersemi}
	\|\rho(0,\cdot)\|^2_{L^2(\Omega)}+\int_{Q}\frac{1}{\varpi_1^2 }|\psi|^2\,\dT
	\leq C\int_{0}^{T}\int_{\omega}|\rho|^2\,\dq,
\end{equation}
where $C=C(\Omega,\omega, \|K\|_{\infty},\|a(w)\|_{L^\infty(Q)},\|b(w)\|_{L^\infty(Q)}, T)>0$, the function $\varpi_1$ is given by \eqref{varpi} and $(\rho,\psi)$ is the solution to the following adjoint system associated to \eqref{zplinear}:
\begin{equation}\label{}
	\left\{
	\begin{array}{ccl}
		\dis -\rho_t-\Delta \rho+\int_\Omega K(t,x,\theta)\rho(t,\theta)\ d\theta =\dis a(w)\rho+\psi\chi_{\O_d}& \mbox{in}& Q,\\
		\dis \psi_t-\Delta \psi+\int_\Omega K(t,x,\theta)\psi(t,\theta)\ d\theta =b(w)\psi-\dis\frac{1}{\mu}\rho\chi_{\O}& \mbox{in}& Q,\\
		\dis \rho=0,\ \ \ \psi=0& \mbox{on}& \Sigma, \\
		\dis  \rho(T,\cdot)=\rho^T,\ \ \ \psi(0,\cdot)=0 &\mbox{in}&\Omega.
	\end{array}
	\right.
\end{equation}

Thanks to estimate \eqref{obsersemi} and arguing as in the Proposition \ref{linear}, we prove the existence of a leader control $\hat{f}\in L^2((0,T)\times\omega)$ solution to the null controllability problem \eqref{znullsemi}-\eqref{zplinear}. The leader control verifies the following estimate
\begin{equation}\label{leadersemi}
	\begin{array}{ccc}
		\dis \|\hat{f}\|_{L^2((0,T)\times\omega)}\leq C
		\dis \left( \|\varpi_1 z_{d}\|_{L^2((
			0,T)\times\O_{d})}+ \|y^0\|_{L^2(\Omega)}\right),
	\end{array}
\end{equation}	
where $C=C(\Omega,\omega, \|K\|_{\infty},\|a(w)\|_{L^\infty(Q)},\|b(w)\|_{L^\infty(Q)}, T)>0$.

Now, we consider a nonlinear map
$$S:L^2(Q)\rightarrow L^2(Q)$$
such that, for every $w\in L^2(Q)$, $S(w)=y$ where $(y, p)$ are solutions of \eqref{zplinear}. Proving that $S$ has a fixed point $y\in L^2(Q)$  will allows us to prove that $y$ is solution of \eqref{eqsemi}. To this end, we use  the Schauder fixed-point theorem. Proceeding as in \cite[Proposition 3.2]{djomegne2021}, we can prove the following properties for every $w\in L^2(Q)$: 
	\begin{enumerate}
		\item $S$ is continuous,
		\item $S$ is compact,
		\item The range of $S$ is bounded; i.e.,
		$\exists M>0:\ \|S(w)\|_{L^2(Q)}\leq M,\ \forall w\in L^2(Q).$		
	\end{enumerate}
This end the proof of Theorem \ref{theosemi} and furthermore the proof of Stackelberg control of
system \eqref{eqsemi}.
\end{proof}

 \section{Exact controllability with distributed leader and boundary follower}\label{boundary}
In this section, we establish a controllability result for the system \eqref{eqbis}.
Let $z:=q-\bar{q}$, where $q$ and $\bar{q}$ are solutions to \eqref{eqbis} and \eqref{eqbisbar} respectively.  Then, $z$ is the solution of the following system
\begin{equation}\label{qz}
	\left\{
	\begin{array}{rllll}
		\dis z_{t}-\Delta z+\int_\Omega K(t,x,\theta)z(t,\theta)\ d\theta &=&g\chi_{\omega}& \mbox{in}& Q,\\
		\dis z&=&u\textbf{1}_{\Gamma}& \mbox{on}& \Sigma, \\
		\dis  z(0,\cdot)&=&z^0 &\mbox{in}&\Omega,
	\end{array}
	\right.
\end{equation}
where $z^0:=q^0-\bar{q}^0$. Thus, the property \eqref{obj} is equivalent to 
\begin{equation}\label{qnull}
	z(T,x)=0\ \mbox{in}\ \Omega.
\end{equation}
The cost functional $\widetilde{J}$ defined by \eqref{j} can be rewritten as follows
 \begin{equation}\label{jq}
	\widetilde{J}(g;u)=\frac{1}{2}\int_0^T\int_{\O_d}|z-z_d|^2\ dxdt+\frac{\mu}{2}\int_0^T\int_{\Gamma}|u|^2\ d\sigma dt,
\end{equation}
where $z_d:=q_d-\bar{q}$.
	
Adapting the arguments of Section \ref{low}, we can prove the existence and uniqueness of an optimal control $\hat{u}$ solution of \eqref{op}. Indeed, for any fixed leader $g$ and $z^0\in L^2(\Omega)$, one can prove that	
\begin{equation}\label{uop}
	\hat{u}=\frac{1}{\mu}\frac{\partial p}{\partial\nu}\ \ \mbox{on}\ \ \ (0,T)\times \Gamma,	
\end{equation}
where  $(z,p)$ is the solution of optimality system
\begin{equation}\label{zp}
	\left\{
	\begin{array}{ccl}
		\dis z_{t}-\Delta z+\int_\Omega K(t,x,\theta)z(t,\theta)\ d\theta =\dis g\chi_{\omega}& \mbox{in}& Q,\\
		\dis -p_{t}-\Delta p+\int_\Omega K(t,x,\theta)p(t,\theta)\ d\theta =(z-z_d)\chi_{\O_d}& \mbox{in}& Q,\\
		\dis z=\frac{1}{\mu}\frac{\partial p}{\partial\nu}\textbf{1}_{\Gamma},\ \ \ p=0& \mbox{on}& \Sigma, \\
		\dis  z(0,\cdot)=z^0,\ \ \ p(T,\cdot)=0 &\mbox{in}&\Omega.
	\end{array}
	\right.
\end{equation}	

Now, we state a regularity result for the solutions to \eqref{zp}.
\begin{proposition}\label{well}
	
	Assume that $g\in L^2((0,T)\times\omega)$, $z^0\in L^2(\Omega)$, $z_d\in L^2((0,T)\times \O_d)$ and $\mu$ is large enough. Then, the system \eqref{zp} has a unique solution $(z,p)\in W(Q)\times H^{2,1}(Q)$. Moreover, $\dis \frac{\partial p}{\partial\nu}\in H^{1/2,1/4}(\Sigma)$, where the Hilbert spaces $H^{1/2,1/4}(\Sigma)$ and $W(Q)$ are given by \eqref{hrsa} and \eqref{w} respectively.
\end{proposition}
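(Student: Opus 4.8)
The plan is to treat the coupled system \eqref{zp} as a fixed-point problem in a suitable space where the boundary coupling $z = \tfrac{1}{\mu}\tfrac{\partial p}{\partial\nu}\textbf{1}_{\Gamma}$ is handled as a perturbation that becomes contractive once $\mu$ is large. First I would set up the iteration map: given $\tilde z \in W(Q)$, solve the backward equation for $p$ with right-hand side $(z-z_d)\chi_{\O_d}$ — note this source is only in $L^2(Q)$, but combined with the zero final datum and the nonlocal term (which, by the computation already used in Proposition \ref{propcarl}, maps $L^2(Q)$ to $L^2(Q)$ boundedly with constant depending on $\|K\|_\infty$ and $T$) classical parabolic regularity (see \cite{lions2013}) gives $p \in H^{2,1}(Q)$ with the estimate $\|p\|_{H^{2,1}(Q)} \le C(\|z-z_d\|_{L^2(Q)})$. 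Then by the trace theorem for $H^{2,1}(Q)$ — this is where the spaces \eqref{hrsa} enter — one has $\tfrac{\partial p}{\partial\nu} \in H^{1/2,1/4}(\Sigma)$ with $\|\tfrac{\partial p}{\partial\nu}\|_{H^{1/2,1/4}(\Sigma)} \le C\|p\|_{H^{2,1}(Q)}$. Multiplying by the smooth function $\textbf{1}_{\Gamma}\in\mathcal{C}^2(\partial\Omega)$ preserves this regularity.

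Next, with this boundary datum $u = \tfrac{1}{\mu}\tfrac{\partial p}{\partial\nu}\textbf{1}_{\Gamma} \in H^{1/2,1/4}(\Sigma)$ (which is the natural regularity making the forward problem well posed with a $W(Q)$, indeed $H^{2,1}(Q)$, solution), I would solve the forward equation for $z$ with source $g\chi_\omega \in L^2(Q)$, the nonlocal term, initial datum $z^0 \in L^2(\Omega)$ and boundary datum $u$. The standard theory of parabolic boundary-value problems with nonhomogeneous Dirichlet data in $H^{1/2,1/4}(\Sigma)$ (again \cite{lions2013}, and one should absorb the nonlocal term as above) yields a unique $z \in W(Q)$ — in fact $z$ inherits extra regularity compatible with $H^{2,1}$ away from $t=0$ — with
\[
\|z\|_{W(Q)} \le C\Big(\|g\|_{L^2((0,T)\times\omega)} + \|z^0\|_{L^2(\Omega)} + \|u\|_{H^{1/2,1/4}(\Sigma)}\Big).
\]
This defines the map $\mathcal{T}:\tilde z \mapsto z$ on $W(Q)$, and composing the two estimates shows $\|\mathcal{T}\tilde z_1 - \mathcal{T}\tilde z_2\|_{W(Q)} \le \tfrac{C}{\mu}\|\tilde z_1 - \tilde z_2\|_{W(Q)}$, since the difference kills $g$, $z^0$, $z_d$ and leaves only the $\tfrac{1}{\mu}$-weighted boundary term driven by $\tilde z_1 - \tilde z_2$ through the $p$-equation. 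For $\mu$ large enough $C/\mu < 1$, so $\mathcal{T}$ is a contraction and Banach's fixed-point theorem gives the unique solution $(z,p)$; the claimed regularity $p \in H^{2,1}(Q)$ and $\tfrac{\partial p}{\partial\nu} \in H^{1/2,1/4}(\Sigma)$ then follows from the two regularity estimates applied to the fixed point, and $z \in W(Q)$ is immediate.

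The main obstacle I anticipate is the bookkeeping of compatibility and the precise trace/lifting chain: one must verify that the source $(z-z_d)\chi_{\O_d}$ together with zero final time data really places $p$ in $H^{2,1}(Q)$ (no corner compatibility condition is needed since $p(T,\cdot)=0$ and the source is merely $L^2$, so $p \in H^{2,1}$ is the maximal-regularity statement), and then that the normal trace of an $H^{2,1}(Q)$ function lands exactly in $H^{1/2,1/4}(\Sigma)$ — this is the anisotropic trace theorem for the heat operator and is the technical heart of the argument. A secondary point is that the nonlocal integral term must be shown not to destroy any of these regularities; but since $\int_\Omega K(t,x,\theta)w(t,\theta)\,d\theta$ depends on $w$ only through an $L^2(\Omega)$-in-space average and $K\in L^\infty$, it maps $L^2(Q)\to L^2(Q)$ and even $W(Q)\to L^2(Q)$ continuously, so it is a lower-order perturbation that is absorbed either into the maximal-regularity estimate directly or into the fixed-point constant. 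Uniqueness is part of the contraction conclusion, but can also be checked directly via an energy estimate on the difference of two solutions, using that $\mu$ large makes the boundary coupling subcritical.
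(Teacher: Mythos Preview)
Your proposal is correct and follows essentially the same route the paper defers to: the paper does not give its own proof of Proposition~\ref{well} but refers to \cite[Proposition~4]{araruna2019boundary} and \cite[Proposition~5]{liliana2020}, both of which establish well-posedness of the analogous coupled system via exactly the contraction/fixed-point scheme you describe (maximal $H^{2,1}$ regularity for the backward $p$-equation, the anisotropic trace $\partial_\nu p\in H^{1/2,1/4}(\Sigma)$, and well-posedness of the forward problem with this boundary datum, the coupling constant $1/\mu$ making the map contractive). The only cosmetic slip is that in your first step the source should read $(\tilde z-z_d)\chi_{\O_d}$ rather than $(z-z_d)\chi_{\O_d}$, and one should note that the nonlocal term is handled simply by the bound $\|\int_\Omega K(\cdot,\cdot,\theta)w(\cdot,\theta)\,d\theta\|_{L^2(Q)}\le \|K\|_\infty|\Omega|^{1/2}\|w\|_{L^2(Q)}$ rather than by anything specific to Proposition~\ref{propcarl}.
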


The proof of the above proposition can be obtained by proceeding as in \cite[Proposition 4]{araruna2019boundary} or \cite[Proposition 5]{liliana2020}.

\begin{remark}\label{rmqwell}$ $
	
\begin{enumerate}
	\item Using the Proposition \ref{well} and from the characterization of the optimal control given by \eqref{uop}, we have that 
	$$
	\hat{u}\textbf{1}_{\Gamma}\in H^{1/2,1/4}(\Sigma).
	$$
	\item The regularity of the follower in $H^{1/2,1/4}(\Sigma)$ instead of $L^2(\Sigma)$ is very important because, with this regularity, we can use the Carleman inequality with non homogeneous boundary condition belonging to $H^{1/2,1/4}(\Sigma)$. 
	
	\item If in the system \eqref{zp}, we consider $p(T,\cdot)=p^T$ with $p^T\in H^1_0(\Omega)$, then we can obtain the same well-posedness result as in the Proposition \ref{well}.
\end{enumerate}
\end{remark}

Now, we want to study the null controllability of the system \eqref{zp}. Arguing as in Section \ref{obsernull}, solving the null controllability problem of the system \eqref{zp} require the proof of a suitable observability inequality for the following adjoint system
\begin{equation}\label{adjoin}
	\left\{
	\begin{array}{ccl}
		\dis -\rho_{t}-\Delta \rho+\int_\Omega K(t,x,\theta)\rho(t,\theta)\ d\theta =\dis \psi\chi_{\O_d}& \mbox{in}& Q,\\
		\dis \psi_{t}-\Delta \psi+\int_\Omega K(t,x,\theta)\psi(t,\theta)\ d\theta =0& \mbox{in}& Q,\\
		\dis \rho=0,\ \ \ \psi=\frac{1}{\mu}\frac{\partial \rho}{\partial\nu}\textbf{1}_{\Gamma}& \mbox{on}& \Sigma, \\
		\dis  \rho(T,\cdot)=\rho^T,\ \ \ \psi(0,\cdot)=0 &\mbox{in}&\Omega,
	\end{array}
	\right.
\end{equation}	
where $\rho^T\in L^2(\Omega)$. 

%\textcolor{red}{\begin{remark}
%We have decided to call the states of the adjoint system \eqref{adjoin} by $(\rho, \psi)$. Note that even if the states of adjoint systems \eqref{rhoadj}-\eqref{psiadj} and \eqref{adjoin} are the same, these systems are different.	
%\end{remark}}

The observability estimate associated to \eqref{adjoin} is given in the following proposition.
\begin{proposition} \label{observq}
	Assume  that $\O_d\cap\omega\neq \emptyset$ and $\mu$ is large enough. Then, there exist a positive constant $C$ and a weight function $\varpi_2=\varpi_2(t)$ blowing up at $t=T$ such that for any $\rho^T\in L^2(\Omega)$, the following inequality holds
	\begin{equation}\label{inequality}
		\|\rho(0,\cdot)\|^2_{L^2(\Omega)}+\int_{Q}\frac{1}{\varpi_2^2}|\psi|^2\,\dT
		\leq C\int_{0}^{T}\int_{\omega}|\rho|^2\,\dq,
	\end{equation}
	where the adjoint variables $(\rho, \psi)$ are the solution of \eqref{adjoin}.
\end{proposition}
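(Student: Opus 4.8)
The plan is to follow the same two-step strategy used to prove Proposition \ref{pro}, adapting it to the coupled boundary-follower adjoint system \eqref{adjoin}. The essential new feature compared to Proposition \ref{pro} is that the coupling now appears through the \emph{boundary} term $\psi = \frac{1}{\mu}\frac{\partial\rho}{\partial\nu}\mathbf{1}_\Gamma$ on $\Sigma$, rather than through a distributed term. Accordingly, I first need a global Carleman estimate for the heat equation with a nonhomogeneous Dirichlet boundary condition (valid for data in $H^{1/2,1/4}(\Sigma)$, which is why the regularity in Proposition \ref{well} and Remark \ref{rmqwell} was recorded). Applying such a Carleman estimate to the $\rho$-equation and a standard interior Carleman estimate to the $\psi$-equation, then summing, yields a first inequality of the form
\begin{equation*}
\mathcal{M}(\rho)+\mathcal{M}(\psi)\leq C(\Omega,\omega)\left(s^{3}\lambda^{4}\int_{0}^{T}\!\!\int_{\omega'}e^{-2s\eta}\varphi^{3}\left(|\rho|^{2}+|\psi|^{2}\right)\dT + (\text{boundary terms in }\rho)\right),
\end{equation*}
after absorbing the nonlocal integral terms exactly as in Step 1 of Proposition \ref{propcarl}, using assumption \eqref{k}, Proposition \ref{exp}, and Fubini's theorem; here the largeness of $\mu$ is used to absorb the $\frac{1}{\mu}\partial_\nu\rho$ boundary contribution back into the left-hand side.

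Next I would eliminate the local term in $\psi$ on the right-hand side, arguing as in Step 2 of Proposition \ref{propcarl} (following \cite{teresa2000insensitizing}): introduce a cutoff $\xi$ supported near $\omega_0\subset\subset\O_d\cap\omega$, multiply the $\rho$-equation by $u\xi\psi$ with $u=s^{3}\lambda^{4}\varphi^{3}e^{-2s\eta}$, integrate by parts, and estimate the resulting terms $J_1,\dots,J_4$ via H\"older and Young inequalities so that the $\psi$ local term is dominated by $\mathcal{M}(\psi)/2$ plus a local term in $\rho$ with higher powers of $s$ and $\varphi$. This produces a Carleman estimate of the shape
\begin{equation*}
\mathcal{M}(\rho)+\mathcal{M}(\psi)\leq Cs^{7}\lambda^{8}\int_{0}^{T}\!\!\int_{\omega}e^{-2s\eta}\varphi^{7}|\rho|^{2}\dT .
\end{equation*}
Finally, as in Proposition \ref{pro}, I would pass from this Carleman estimate to the observability inequality \eqref{inequality} by the standard argument: on $[T/2,T]$ the weights $(\eta,\varphi)$ coincide with the non-vanishing weights $(\alpha,\zeta)$; on $[0,T/2]$ one uses classical energy estimates for \eqref{adjoin}, absorbing $\int_0^{T/2}\!\int_\Omega|\psi|^2$ via the $\psi$-equation energy estimate and the largeness of $\mu$; then combine the two ranges, bound $e^{-2s\eta}\varphi^{7}\in L^\infty(Q)$, and set $\varpi_2(t)=e^{s\alpha^{*}}(\zeta^{*})^{-3/2}$.

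\textbf{The main obstacle} I anticipate is the boundary Carleman estimate for the $\rho$-equation with nonhomogeneous boundary data in $H^{1/2,1/4}(\Sigma)$, together with controlling the extra boundary terms that appear in the integration by parts in both steps; unlike in the fully interior case of Section \ref{interiorpart}, the trace $\frac{1}{\mu}\frac{\partial\rho}{\partial\nu}$ must be estimated and absorbed, and this is precisely where the $H^{1/2,1/4}$ regularity from Proposition \ref{well} and the hypothesis that $\mu$ be large enough are indispensable. A secondary technical point is keeping track of the nonlocal terms $\int_\Omega K(t,x,\theta)\psi(t,\theta)\,d\theta$ in the $\psi$-equation through the energy estimates on $[0,T/2]$, but this is handled by the same exponential-weight argument already used for \eqref{esta}--\eqref{estb}. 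Once these points are settled, the rest of the argument is a routine transcription of the proofs of Propositions \ref{propcarl} and \ref{pro}.
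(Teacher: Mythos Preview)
Your overall two-step strategy (Carleman estimate followed by energy estimates on $[0,T/2]$ and gluing) is right, but you have the roles of the two equations reversed, and you are missing the key technical device that makes the boundary term absorbable.

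Look again at the adjoint system \eqref{adjoin}: it is $\rho$ that satisfies the \emph{homogeneous} Dirichlet condition $\rho=0$ on $\Sigma$, and $\psi$ that carries the \emph{nonhomogeneous} condition $\psi=\frac{1}{\mu}\frac{\partial\rho}{\partial\nu}\mathbf{1}_{\Gamma}$. Hence the standard Carleman estimate \eqref{car} (Lemma \ref{prop4}) is applied to the $\rho$-equation, while the Carleman estimate for nonhomogeneous boundary data (Lemma \ref{prog}, estimate \eqref{carlg}) is applied to the $\psi$-equation. This yields $\mathcal{M}(\rho)+\mathcal{N}(\psi)$ on the left (note $\mathcal{N}$, not $\mathcal{M}$, for $\psi$: the powers are shifted down by $2$), together with an extra boundary contribution of the form $s^{-1/2}\bigl\|\varphi^{-1/4}e^{-s\eta}\frac{1}{\mu}\frac{\partial\rho}{\partial\nu}\mathbf{1}_{\Gamma}\bigr\|^{2}_{H^{1/2,1/4}(\Sigma)}$. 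Because of this, the local-term elimination step uses $u=s\lambda^{2}\varphi e^{-2s\eta}$ (not $s^{3}\lambda^{4}\varphi^{3}e^{-2s\eta}$), the final Carleman estimate carries $s^{5}\lambda^{6}\varphi^{5}$ (not $s^{7}\lambda^{8}\varphi^{7}$), and the resulting weight is $\varpi_{2}(t)=e^{s\alpha^{*}}(\zeta^{*})^{-1/2}$ (not $(\zeta^{*})^{-3/2}$).

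The more serious gap is how you intend to control the boundary term. Saying ``the largeness of $\mu$ is used to absorb the $\frac{1}{\mu}\partial_{\nu}\rho$ boundary contribution'' is not enough: the Carleman left-hand side contains only weighted $L^{2}$ norms of $\rho$, $\nabla\rho$, $\psi$, $\nabla\psi$, whereas the boundary term is an $H^{1/2,1/4}(\Sigma)$ norm. The paper's device is to pass, via the continuity of the trace $\rho\mapsto\partial_{\nu}\rho\,\mathbf{1}_{\Gamma}$ from $H^{2,1}(Q)$ to $H^{1/2,1/4}(\Sigma)$, to $\|(\varphi^{*})^{-1/4}e^{-s\eta^{*}}\rho\|^{2}_{H^{2,1}(Q)}$, and then to estimate this by introducing the auxiliary function $\widehat{\rho}=\sigma\rho$ with $\sigma=e^{-s\eta^{*}}(s\varphi^{*})^{-1/4}$: the equation satisfied by $\widehat{\rho}$ has a homogeneous boundary and zero final datum, so parabolic $H^{2,1}$ regularity bounds $\|\widehat{\rho}\|_{H^{2,1}(Q)}$ by weighted $L^{2}(Q)$ norms of $\psi$ and $\rho$ (with lower powers of $s\varphi$), which can then be absorbed into $\mathcal{M}(\rho)+\mathcal{N}(\psi)$ for $s$ large. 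The same auxiliary-function trick (with $\sigma=e^{-s\alpha^{*}}(\zeta^{*})^{-1/4}$) is needed again in the energy step on $[0,T/2]$, because the energy estimate for $\psi$ involves $\|\frac{1}{\mu}\partial_{\nu}\rho\,\mathbf{1}_{\Gamma}\|_{H^{1/2,1/4}}$, not a weighted $L^{2}$ norm of $\rho$; only after this conversion does the factor $1/\mu^{2}$ allow absorption. Without this $H^{2,1}$ lifting argument, the boundary term cannot be closed.
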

Thanks to Proposition \ref{observq} and proceeding as in the Proposition \ref{linear}, we can prove that for each $z^0\in L^2(\Omega)$, there exists a control $\hat{g}\in L^2((0,T)\times\omega)$ such that the associated solution $(z, p)$ to \eqref{zp} satisfies \eqref{qnull}.

\begin{remark}
	Due to the density of $H^1_0(\Omega)$ in $L^2(\Omega)$, it can be assumed that $\rho^T\in H^1_0(\Omega)$. Since systems \eqref{zp} and \eqref{adjoin} have similar structures, according to Remark \ref{rmqwell}, the system \eqref{adjoin} admits a unique solution $(\rho,\psi)\in H^{2,1}(Q)\times W(Q)$.
\end{remark}

Now, we want to prove the inequality \eqref{inequality}. Thanks to hypothesis $\O_d\cap\omega\neq \emptyset$, then, there exists a nonempty set $\omega^\prime$ such that $\omega^\prime\subset\subset\O_d\cap\omega$ and $\eta^0\in \mathcal{C}^2(\overline{\Omega})$ a function satisfying \eqref{c3}.\\
For $(t,x)\in Q$, we consider the following functions
\begin{equation}\label{etaetoile}
	\begin{array}{rll}
	\dis \eta^*(t)=\dis  \max_{x\in \overline{\Omega}}\eta(t,x),\ \ \ \varphi^*(t)=\dis  \min_{x\in \overline{\Omega}}\varphi(t,x),	
	\end{array}
\end{equation}
where $\eta$ and $\varphi$ are the weight functions defined by \eqref{eta}.

We need the following notation in the sequel
\begin{equation}\label{n}
	\mathcal{N}(z)=s^{-1}\int_{Q}e^{-2s\eta}\varphi^{-1}|\nabla z|^2dxdt+s\lambda^2\int_{Q}e^{-2s\eta}\varphi| z|^2dxdt.
\end{equation}
For $F\in L^2(Q)$, $z_0\in L^2(\Omega)$ and $h\in H^{1/2,1/4}(\Sigma)$, we consider the following system
\begin{equation}\label{eqboundary}
	\left\{
	\begin{array}{rllll}
		\dis \frac{\partial{z}}{\partial{t}}-\Delta z&=&F& \mbox{in}& Q,\\
		\dis z&=&h& \mbox{on}& \Sigma,\\
		\dis z(0,\cdot)&=&z_0& \mbox{in}& \Omega.
	\end{array}
	\right.
\end{equation}
The observability inequality \eqref{inequality} is established via a global Carleman estimate given in \cite{ImanuvilovPuelYamamoto} for the solution of system \eqref{eqboundary}. More precisely, we have the following result.

\begin{lemma} \label{prog}
	Let $F\in L^2(Q)$, $z_0\in L^2(\Omega)$ and $h\in H^{1/2,1/4}(\Sigma)$. Then, there exists $s_0\geq1$ and $\lambda_0\geq1$ and there exists a constant $C_0>0$ independent of $s$ and $\lambda$ such that for all $s\geq s_0$ and $\lambda\geq \lambda_0 $, and for any $z\in W(Q)$ solution of \eqref{eqboundary}, we have
	\begin{equation}\label{carlg}
		\begin{array}{llll}
			\dis \mathcal{N}(z)&\leq& 
			\dis C_0\left(s^{-\frac{1}{2}}\|\varphi^{-\frac{1}{4}}he^{-s\eta}\|^2_{H^{1/2,1/4}(\Sigma)}+
			s^{-2}\lambda^{-2}\int_{Q} e^{-2s\eta}\varphi^{-2}|F|^2\ dxdt\right.\\
			&&\dis \left.+s\lambda^2\int_{0}^{T}\int_{\omega^\prime}e^{-2s\eta}\varphi |z|^2\ dxdt\right).
		\end{array}	
	\end{equation}
\end{lemma}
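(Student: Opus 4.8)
The plan is to read \eqref{carlg} as a particular case of the global Carleman estimate for the heat operator with nonhomogeneous Dirichlet data established in \cite{ImanuvilovPuelYamamoto}, and to check that the objects used here meet the structural hypotheses required there. Two observations make the reduction clean. First, \eqref{eqboundary} is the pure heat equation, with no nonlocal kernel: when the lemma is later applied to the adjoint system \eqref{adjoin}, the kernel term is simply carried into the source $F$, so no new phenomenon occurs at the level of this estimate. Second, the weights $\eta,\varphi$ of \eqref{eta} are built from a function $\eta^0$ satisfying \eqref{c3} and from the time factor $l(t)$ of \eqref{l}. The only departure from the reference is the use of $l(t)$ instead of the classical $t(T-t)$; but since $l(t)\to 0$ as $t\to 0^+$ and as $t\to T^-$, the factor $e^{-2s\eta}$ still vanishes at both temporal ends, so every boundary-in-time contribution disappears, exactly as recorded in the Remark following Proposition \ref{exp} for Lemma \ref{prop4}. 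With these checks, the argument of \cite{ImanuvilovPuelYamamoto} transfers without change.

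To make the derivation self-contained one would follow the conjugation scheme. By density it suffices to treat smooth $F$, $h$, $z_0$. Setting $w=e^{-s\eta}z$, the equation $z_t-\Delta z=F$ becomes $P_1w+P_2w=e^{-s\eta}F+Rw$, where $P_1$ and $P_2$ collect the self-adjoint and skew-adjoint parts of the conjugated operator and $R$ is of lower order. Expanding the identity $\|P_1w\|^2_{L^2(Q)}+\|P_2w\|^2_{L^2(Q)}+2(P_1w,P_2w)_{L^2(Q)}=\|e^{-s\eta}F+Rw\|^2_{L^2(Q)}$ and integrating the cross term by parts in $x$ and $t$ produces positive interior quantities of the type controlling $\mathcal{N}(z)$ in \eqref{n}, together with time-boundary terms at $t=0,T$ (which vanish by the previous paragraph) and, crucially, lateral boundary terms on $\Sigma$ that do not appear in the homogeneous setting of Lemma \ref{prop4}.

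The hard part, and the only place genuinely beyond the homogeneous estimate \eqref{car}, is the control of these lateral terms on $\Sigma$. After integration by parts they involve both the Dirichlet trace $w|_\Sigma=e^{-s\eta}h$ and the normal derivative $\partial_\nu w|_\Sigma$, the latter not being controlled by the datum $h$. The resolution, following \cite{ImanuvilovPuelYamamoto}, is twofold: the leading normal-derivative contribution carries the factor $\partial_\nu\eta^0$, which has a definite sign on $\partial\Omega$ by the geometric condition \eqref{c3} (since $\eta^0>0$ in $\Omega$ and $\eta^0=0$ on $\partial\Omega$), so it is of the favorable sign and, through Young's inequality, absorbs the mixed boundary products; the remaining Dirichlet-trace contributions are then estimated by trace and interpolation inequalities against the weighted norm $\|\varphi^{-1/4}he^{-s\eta}\|_{H^{1/2,1/4}(\Sigma)}$, which is the natural trace space for the $H^{2,1}(Q)$ regularity of parabolic solutions. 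It is precisely this matching of boundary integrals to an $H^{1/2,1/4}(\Sigma)$ norm that forces the reduced powers $s^{-1}\varphi^{-1}$ and $s\lambda^2\varphi$ in $\mathcal{N}$: with only $H^{1/2,1/4}$ control of the trace one cannot close a stronger left-hand side, which is why \eqref{carlg} sits two orders below the Fursikov--Imanuvilov estimate \eqref{car}.

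Finally, the interior term on $\omega^\prime$ is produced by the usual localization device: because \eqref{c3} only gives $|\nabla\eta^0|>0$ on $\overline{\Omega}\setminus\omega^\prime$, one multiplies by a cutoff supported away from $\omega^\prime$ and transfers the resulting commutator error into $s\lambda^2\int_0^T\int_{\omega^\prime}e^{-2s\eta}\varphi|z|^2\,\dq$. Taking first $\lambda\ge\lambda_0$ and then $s\ge s_0$ large absorbs the remainder $Rw$ together with all lower-order terms, and undoing the substitution $z=e^{s\eta}w$ returns \eqref{carlg}. I expect the boundary-term analysis of the third paragraph to be the main obstacle; everything else is a direct transcription of the homogeneous Carleman machinery of \cite{FursikovImanuvilov1996}.
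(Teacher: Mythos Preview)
Your proposal is correct, and in fact it goes further than the paper does: the paper does not prove this lemma at all but simply imports it from \cite{ImanuvilovPuelYamamoto}, noting only (in the Remark following Proposition~\ref{exp}) that replacing $t(T-t)$ by the function $l(t)$ of \eqref{l} is harmless. Your sketch of the conjugation argument, the handling of the lateral boundary terms via the sign of $\partial_\nu\eta^0$, and the explanation of why the $H^{1/2,1/4}$ trace norm forces the weaker powers in $\mathcal{N}$ are all accurate and add useful detail that the paper omits.
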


Thanks to Lemma \ref{prop4} and Lemma \ref{prog}, we can prove the Carleman inequality associated to adjoint system \eqref{adjoin}. We have the following result.
\begin{proposition} \label{carlemang} 
	Assume  that $\omega\cap\O_d\neq \emptyset$ and $\mu$ is large enough. Then, there exist a  positive constant $C=C(\Omega, \omega,T)>0$ such that the solution $(\rho,\psi)$ to \eqref{adjoin} satisfies
	\begin{equation}\label{robg}
		\begin{array}{lll}
			\dis \mathcal{M}(\rho)+\mathcal{N}(\psi)\leq
			\dis Cs^5\lambda^6\int_0^T\int_{\omega}e^{-2s\eta}\varphi^5|\rho|^2\dT,
		\end{array}
	\end{equation}		
	for every $s>0$ large enough.
\end{proposition}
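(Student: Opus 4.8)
The plan is to combine the two Carleman estimates available from the excerpt — Lemma \ref{prop4} for the backward heat operator applied to $\rho$, and Lemma \ref{prog} for the forward heat operator with nonhomogeneous boundary data applied to $\psi$ — in a coupled way, then absorb the nonlocal terms exactly as in Proposition \ref{propcarl}, and finally remove the local term in $\psi$. The structure mirrors the proof of Proposition \ref{propcarl} closely; the genuinely new ingredient is handling the boundary coupling $\psi = \frac1\mu \frac{\partial\rho}{\partial\nu}\mathbf{1}_\Gamma$ on $\Sigma$, which is why the $\psi$-estimate must be the $\mathcal N$-type estimate (with the $H^{1/2,1/4}(\Sigma)$ boundary term) rather than the $\mathcal M$-type one.

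\textbf{Step 1 (apply the two Carleman estimates and absorb lower-order and nonlocal terms).} First I would apply Lemma \ref{prop4} (in the backward form, via Remark \ref{rem3}) to the $\rho$-equation of \eqref{adjoin} with right-hand side $F_\rho = \psi\chi_{\O_d} - \int_\Omega K\rho\,d\theta$, obtaining $\mathcal M(\rho)$ bounded by $\int_Q e^{-2s\eta}|\psi|^2\,\dT + \int_Q e^{-2s\eta}|\int_\Omega K\rho\,d\theta|^2\,\dT$ plus the local term $s^3\lambda^4\int_0^T\int_{\omega'}e^{-2s\eta}\varphi^3|\rho|^2\,\dT$. Then I would apply Lemma \ref{prog} to the $\psi$-equation, which is a forward heat equation with $F_\psi = -\int_\Omega K\psi\,d\theta$ and boundary data $h = \frac1\mu\frac{\partial\rho}{\partial\nu}\mathbf{1}_\Gamma$; this gives $\mathcal N(\psi)$ bounded by $\frac{1}{\mu^2}s^{-1/2}\|\varphi^{-1/4}e^{-s\eta}\frac{\partial\rho}{\partial\nu}\mathbf{1}_\Gamma\|^2_{H^{1/2,1/4}(\Sigma)}$, the nonlocal term in $\psi$, and the local term $s\lambda^2\int_0^T\int_{\omega'}e^{-2s\eta}\varphi|\psi|^2\,\dT$. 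The boundary term is controlled by a trace/regularity estimate: since $\Gamma$ is a small patch and $\rho$ solves a heat equation, one bounds $\|\varphi^{-1/4}e^{-s\eta}\frac{\partial\rho}{\partial\nu}\|^2_{H^{1/2,1/4}(\Sigma)}$ by (a multiple of) $\mathcal M(\rho)$ — this is exactly the mechanism used in \cite{araruna2019boundary,liliana2020}. Choosing $\mu$ large enough absorbs this term, together with the $\int_Q e^{-2s\eta}|\psi|^2$ term coming from the $\rho$-estimate, into the left-hand side. The two nonlocal integrals are estimated precisely as in \eqref{esta}–\eqref{estb} using assumption \eqref{k}, Proposition \ref{exp}, and Fubini, each bounded by $C\mathcal K^2\int_Q e^{-2s\eta}|\rho|^2$ (resp. $|\psi|^2$), hence absorbable for $s$ large. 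At the end of Step 1 one has
$$
\mathcal M(\rho)+\mathcal N(\psi)\leq C\Big(s^3\lambda^4\int_0^T\!\!\int_{\omega'}e^{-2s\eta}\varphi^3|\rho|^2\,\dT + s\lambda^2\int_0^T\!\!\int_{\omega'}e^{-2s\eta}\varphi|\psi|^2\,\dT\Big).
$$

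\textbf{Step 2 (eliminate the local term in $\psi$).} This is the same trick as Step 2 of Proposition \ref{propcarl}: pick $\omega'\subset\subset\omega_0\subset\subset\O_d\cap\omega$ and a cutoff $\xi$ as in \eqref{owogene1}, multiply the $\rho$-equation of \eqref{adjoin} by $u\xi\psi$ with an appropriate power-of-$\varphi$ weight $u$ (here $u = s\lambda^2\varphi e^{-2s\eta}$, matching the weight of the $|\psi|^2$ local term on the right), integrate by parts over $Q$, and estimate the resulting boundary/volume terms $J_1,\dots,J_4$ by Young's inequality — the new feature is a nonzero boundary integral from the integration by parts in $x$, but since $\rho=0$ on $\Sigma$ and $\psi$ has the prescribed boundary value, this boundary contribution is either zero (from the $\rho=0$ factor) or again absorbable into the Carleman left-hand side via the trace estimate. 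The volume terms are bounded by $\delta$-fractions of $\mathcal M(\psi)$-type and $\mathcal N(\psi)$-type quantities plus local terms in $\rho$ with higher powers of $\varphi$ (ending at $\varphi^5$, consistent with the statement); choosing the $\delta_i$ small relative to the constant from Step 1, and using that $\psi$ satisfies a plain heat equation so $s\lambda^2\int_Q e^{-2s\eta}\varphi|\psi|^2 \le \mathcal N(\psi)$ can reappear on the right but be absorbed, one is left with only a local term in $\rho$. Finally, since $\omega_0\subset\omega$ and $e^{-2s\eta}\varphi^5$ can be bounded by $Cs^5\lambda^6 e^{-2s\eta}\varphi^5$ after rescaling the powers, one arrives at \eqref{robg}.

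\textbf{The main obstacle} I expect is the correct bookkeeping of the boundary term $\|\varphi^{-1/4}e^{-s\eta}\frac{\partial\rho}{\partial\nu}\mathbf{1}_\Gamma\|_{H^{1/2,1/4}(\Sigma)}$: one must verify that it is genuinely dominated by $\mathcal M(\rho)$ (not just by $\mathcal M(\rho)$ with an extra $\lambda$ or $s$ power that would spoil the absorption) and with a constant independent of $\mu$, so that the factor $\mu^{-2}$ in front actually makes it vanish in the limit. This is exactly the technical heart of the boundary-follower results in \cite{araruna2019boundary,liliana2020}, and the nonlocal kernel does not interact with it, so the proof goes through by invoking those trace/regularity estimates together with the kernel bookkeeping of Step 1. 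The rest is a routine, if lengthy, adaptation of Proposition \ref{propcarl}.
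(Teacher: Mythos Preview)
Your overall strategy matches the paper's proof: apply Lemma~\ref{prop4} to $\rho$, Lemma~\ref{prog} to $\psi$, absorb the nonlocal terms via \eqref{esta}--\eqref{estb}, then kill the local $\psi$-term by multiplying the $\rho$-equation by $u\xi\psi$ with $u=s\lambda^2\varphi e^{-2s\eta}$ and integrating by parts (the cutoff $\xi\in C_0^\infty(\Omega)$ makes the boundary contribution in Step~2 vanish identically, so your extra worry there is unnecessary).

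The one point that is imprecise is your handling of the boundary term. It is \emph{not} bounded directly by $\mathcal M(\rho)$: the norm $\bigl\|\varphi^{-1/4}e^{-s\eta}\tfrac{\partial\rho}{\partial\nu}\mathbf{1}_\Gamma\bigr\|_{H^{1/2,1/4}(\Sigma)}$ requires $H^{2,1}$-regularity of a weighted $\rho$, not merely the $H^1$-type information contained in $\mathcal M(\rho)$. The paper (following \cite{araruna2019boundary,liliana2020}) sets $\widehat\rho=\sigma\rho$ with $\sigma=e^{-s\eta^*}(s\varphi^*)^{-1/4}$, uses the trace inequality $\|\partial_\nu\widehat\rho\|_{H^{1/2,1/4}(\Sigma)}\le C\|\widehat\rho\|_{H^{2,1}(Q)}$, and then the parabolic energy estimate for the equation satisfied by $\widehat\rho$. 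Because the source of the $\rho$-equation is $\psi\chi_{\O_d}$, this produces weighted $L^2$ terms in \emph{both} $\rho$ and $\psi$, namely
\[
C\Bigl(\int_Q e^{-2s\eta}(s\varphi)^2|\rho|^2\,dx\,dt+\int_Q e^{-2s\eta}s\varphi|\psi|^2\,dx\,dt\Bigr),
\]
with the factor $\mu^{-2}$ in front. The $\rho$-term is lower order than $\mathcal M(\rho)$ and is absorbed by taking $s$ large; the $\psi$-term has the same $s\varphi$ weight as $\mathcal N(\psi)$, so here the smallness comes from $\mu$ large (this is where the hypothesis ``$\mu$ large enough'' actually enters, not for the $\int_Q e^{-2s\eta}|\psi|^2$ term from the $\rho$-estimate, which is absorbed by $s$ large since $\varphi^{-1}\le C(T)$). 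With this correction your outline is exactly the paper's proof.
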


\begin{proof}
We apply the estimate \eqref{car} to the first equation of \eqref{adjoin}, then we apply the estimate \eqref{carlg} to the second equation for \eqref{adjoin} and add them up, we obtain
\begin{equation*}
	\begin{array}{lll}
		\mathcal{M}(\rho)+\mathcal{N}(\psi)\\
			\leq\dis  C \left(\int_0^T\int_{\omega^\prime}e^{-2s\eta}\left(s^3\lambda^4\varphi^3|\rho|^2+s\lambda^2\varphi|\psi|^2\right)\dT+\int_{Q}e^{-2s\eta}|\psi|^2\dT\right)\\
		+\dis C\left( \int_{Q}e^{-2s\eta}\left|\int_\Omega K(t,x,\theta)\rho(t,\theta)\ d\theta\right|^2\dT+\int_{Q}e^{-2s\eta}\left|\int_\Omega K(t,x,\theta)\psi(t,\theta)\ d\theta\right|^2\dT\right)\\
		+\dis Cs^{-\frac{1}{2}}\left\|\varphi^{-\frac{1}{4}}e^{-s\eta}\frac{1}{\mu}\frac{\partial \rho}{\partial\nu}\textbf{1}_{\Gamma}\right\|^2_{H^{1/2,1/4}(\Sigma)},
	\end{array}
\end{equation*}
for any $s\geq \hat{s}$, $\lambda\geq \hat{\lambda}$, where $\hat{s}=\max(s_0,s_1)$, $\hat{\lambda}=\max(\lambda_0,\lambda_1)$ and the constant $C=C(C_0,\Omega,\omega)>0$.

Now, we fix the parameter $\lambda=\hat{\lambda}$ sufficiently large.  From \eqref{esta} and \eqref{estb}, we can absorb some terms in the right hand side of the above estimate and we obtain
\begin{equation}\label{termg}
	\begin{array}{lll}
		\mathcal{M}(\rho)+\mathcal{N}(\psi)\\
			\leq\dis  C\left( \int_0^T\int_{\omega^\prime}e^{-2s\eta}\left(s^3\lambda^4\varphi^3|\rho|^2+s\lambda^2\varphi|\psi|^2\right)\dT+\dis s^{-\frac{1}{2}}\left\|\varphi^{-\frac{1}{4}}e^{-s\eta}\frac{1}{\mu}\frac{\partial \rho}{\partial\nu}\textbf{1}_{\Gamma}\right\|^2_{H^{1/2,1/4}(\Sigma)}\right),
	\end{array}
\end{equation}
where $C=C(C_0,\Omega,\omega)>0$ and $C_0$ is the constant defined to Lemma \ref{prog}.\\
Now, we want to estimate the last term in \eqref{termg}. Firstly, note that the map $\dis \rho\longmapsto \frac{\partial \rho}{\partial\nu}\textbf{1}_{\Gamma}$ is a continuous and linear from $H^{2,1}(Q)$ to $ H^{1/2,1/4}(\Sigma)$. Secondly, due to the definition of weight functions given in \eqref{eta} and \eqref{etaetoile}, we have that $\eta$ and $\varphi$ are equals to $\eta^*$ and $\varphi^*$ on the boundary. Then, we can write
\begin{equation}\label{boun}
	\begin{array}{lll}
		\dis s^{-\frac{1}{2}}\left\|\varphi^{-\frac{1}{4}}e^{-s\eta}\frac{1}{\mu}\frac{\partial \rho}{\partial\nu}\textbf{1}_{\Gamma}\right\|^2_{H^{1/2,1/4}(\Sigma)}	\leq\dis  \frac{1}{\mu}s^{-\frac{1}{2}}\left\|(\varphi^*)^{-\frac{1}{4}}e^{-s\eta^*}\rho\right\|^2_{H^{2,1}(Q)}.
	\end{array}
\end{equation}
The strategy now is to estimate the term in the right hand side of \eqref{boun}. We set $\widehat{\rho}=\sigma\rho$ with $\sigma\in \mathcal{C}^\infty([0,T])$ defined by 
\begin{equation}\label{sigma}
\sigma= e^{-s\eta^*}(s\varphi^*)^{-1/4}.	
\end{equation}
 Observing that $\sigma(T)=0$ and using the definition of weight functions $\eta^*$ and $\varphi^*$, we obtain  for $s$ large enough
\begin{equation}\label{sigmat}
	|\sigma_t|\leq Ce^{-s\eta^*}(s\varphi^*).
\end{equation} 

Using the first equation of \eqref{adjoin}, $\widehat{\rho}$ satisfies the following system
\begin{equation}\label{can}
	\left\{
	\begin{array}{rllll}
		\dis -\widehat{\rho}_t-\Delta \widehat{\rho}+\int_\Omega K(t,x,\theta)\widehat{\rho}(t,\theta)\ d\theta &=&\dis\sigma\psi\chi_{\O_d}-\sigma_t\rho& \mbox{in}& Q,\\
		\dis \widehat{\rho}&=&0& \mbox{on}& \Sigma,\\
		\dis \widehat{\rho}(T,\cdot)&=&0& \mbox{in}& \Omega.
	\end{array}
	\right.
\end{equation}
Using the Proposition \ref{well}, we deduce that the system \eqref{can} admits a unique solution $\widehat{\rho}\in H^{2,1}(Q)$. Furthermore, the classical energy estimate associated to $\widehat{\rho}$ is given by	
\begin{equation}\label{estsigma}
	\begin{array}{llllll}
		\dis \|\widehat{\rho}\|^2_{H^{2,1}(Q)}\leq 
		C(\|K\|_{\infty}, T)\left(\|\sigma\psi\|^2_{L^2(Q)}+\|\sigma_t\rho\|^2_{L^2((Q)}\right).
	\end{array}
\end{equation} 		
Combining \eqref{sigma} and \eqref{sigmat} together with estimate \eqref{estsigma}, we have 	
\begin{equation}\label{estsigmabis} 
	\begin{array}{llllll}
		\dis s^{-\frac{1}{2}}\left\|(\varphi^*)^{-\frac{1}{4}}e^{-s\eta^*}\rho\right\|^2_{H^{2,1}(Q)}\\
		\dis\leq 
		C(\|K\|_{\infty}, T)\left(\int_Q e^{-2s\eta^*}(s\varphi^*)^{-1/2} |\psi|^2\ dxdt+\int_Q e^{-2s\eta^*}(s\varphi^*)^{2} |\rho|^2\ dxdt\right).
	\end{array}
\end{equation} 			
Using the definition of the weight functions $\eta^*$ and $\varphi^*$, then combining \eqref{estsigmabis} and \eqref{boun}, we get
\begin{equation}\label{estsi}
	\begin{array}{llllll}
		\dis s^{-\frac{1}{2}}\left\|\varphi^{-\frac{1}{4}}e^{-s\eta}\frac{1}{\mu}\frac{\partial \rho}{\partial\nu}\textbf{1}_{\Gamma}\right\|^2_{H^{1/2,1/4}(\Sigma)}\\
		\dis\leq 
		C(\|K\|_{\infty},\mu, T)\left(\int_Q e^{-2s\eta}s\varphi |\psi|^2\ dxdt+\int_Q e^{-2s\eta}(s\varphi)^{2} |\rho|^2\ dxdt\right).
	\end{array}
\end{equation} 	
Substituting \eqref{estsi} in \eqref{termg} and taking $s$ sufficiently large, we have 
\begin{equation}\label{termgbis}
	\begin{array}{lll}
	\mathcal{M}(\rho)+\mathcal{N}(\psi)
		\leq\dis  C(C_0,\Omega,\omega) \int_0^T\int_{\omega^\prime}e^{-2s\eta}\left(s^3\lambda^4\varphi^3|\rho|^2+s\lambda^2\varphi|\psi|^2\right)\dT.
	\end{array}
\end{equation}

	The last step is to eliminate the local term corresponding to $\psi$  on the right hand side of the estimate \eqref{termgbis}. We proceed as in the second step of the proof of Proposition \ref{propcarl}. We consider the open set $\omega_0$ and the function $\xi$ defined by \eqref{owogene1}.

We define $u=s\lambda^2\varphi e^{-2s\eta}$. Using the definition of $\varphi$ and $\eta$ given in \eqref{eta}, we get $u(T)=u(0)=0$. Due to the estimates \eqref{gradien}, we have the following:
\begin{equation}\label{conbn}
	\begin{array}{rll}
		\dis |u\xi|\leq s\lambda^2\varphi e^{2s\eta}\xi,\ \ \ \ \ \
		\dis \left|(u\xi)_t\right|\leq C(T)s^2\lambda^2\varphi^3e^{2s\eta}\xi,\\
		\\ 
		\dis |\nabla(u\xi)|\leq Cs^3\lambda^3\varphi^2e^{2s\eta}\xi,\ \ \ \ \ \
		\dis |\Delta(u\xi)|\leq Cs^3\lambda^4\varphi^3e^{2s\eta}\xi,
	\end{array}
\end{equation}
where $C$ is a positive constant.\\ 
Multiplying the first equation of \eqref{adjoin}  by $u\xi\psi$, integrating by parts over $Q$ and using the Fubini's theorem, we obtain
\begin{equation}\label{beaubn}
	I_1+I_2+I_3=\int_{Q}u\xi|\psi|^2\chi_{\O_d}\ \dT,
\end{equation}
where
\begin{equation*}
	I_1=\dis \int_{Q}\rho\psi(u\xi)_t\ \dT
	\leq\dis \frac{\gamma_1}{2}\int_{Q}  s\lambda^2\varphi e^{-2s\eta}|\psi|^2\dT+\frac{C(T)}{\gamma_1}\int_{0}^{T}\int_{\omega_0} s^3\lambda^2\varphi^5e^{-2s\eta}|\rho|^2\dT,
\end{equation*}
\begin{equation*}
	I_2=\dis-\int_Q\rho\psi\Delta(u\xi)\ \dT
	\leq \frac{\gamma_2}{2}\int_{Q} s\lambda^2\varphi e^{-2s\eta}|\psi|^2\dT+\frac{C}{\gamma_2}\int_{0}^{T}\int_{\omega_0} s^5\lambda^6\varphi^5e^{-2s\eta}|\rho|^2\dT,
\end{equation*}
\begin{equation*}
	I_3= \dis -2\int_{Q}\rho\nabla\psi.\nabla(u\xi)\ \dT
	\leq \frac{\gamma_3}{2}\int_{Q}s^{-1}\varphi^{-1} e^{-2s\eta}|\nabla\psi|^2\dT+\frac{C}{\gamma_3}
	\int_{0}^{T}\int_{\omega_0} s^{5/2}\lambda^6\varphi^e{5/2}^{-2s\eta}|\rho|^2\dT,	
\end{equation*}
for any $\gamma_i>0,\ i=1,2,3$.

By making the right choice of $\gamma_i,\ i=1,2,3$, we obtain that
\begin{equation}\label{owo4bn}
	\begin{array}{rll}
		\dis \int_{0}^{T}\int_{\omega^\prime} s\lambda^2\varphi e^{-2s\eta}|\psi|^2\dT\dis \leq \dis \frac{1}{2}\mathcal{N}(\psi)+\dis C(T)\int_0^T\int_{\omega_0}s^5\lambda^6\varphi^5e^{-2s\eta}|\rho|^2 \ \dT.
	\end{array}
\end{equation}
We replace \eqref{owo4bn} in \eqref{termgbis} and since $\omega_0\subset \omega$, we deduce \eqref{robg}.   

\end{proof}

Now, we are ready to prove the observability inequality \eqref{inequality}.
\paragraph{}
\textbf{Proof of Proposition \ref{observq}.}
We proceed as in the proof of Proposition \ref{pro}. The idea is to improve \eqref{robg} in the sense that the weight functions do not degenerate at $t=0$. To this end, we consider functions $\alpha,\ \zeta,\ \alpha^*$ and $\zeta^*$ defined in \eqref{alpha}.

 We fix the parameter $s$ to a sufficiently large fixed value. Firstly, by construction, $\eta=\alpha$ and $\varphi=\zeta$ in $[T/2,T]\times \Omega$, hence, 
\begin{equation}\label{pou5bn}
	\begin{array}{llll}
		\dis \int_{T/2}^{T}\int_{\Omega}e^{-2s\alpha}\zeta^3|\rho|^2\ dxdt+\int_{T/2}^{T}\int_{\Omega}e^{-2s\alpha}\zeta|\psi|^2\ dxdt\\
		=\dis \int_{T/2}^{T}\int_{\Omega}e^{-2s\eta}\varphi^3|\rho|^2\ dxdt+\int_{T/2}^{T}\int_{\Omega}e^{-2s\eta}\varphi|\psi|^2\ dxdt\\
		\leq\dis C(\Omega, \omega,T)\int_{0}^{T}\int_{\omega}e^{-2s\eta}\varphi^5|\rho|^2\,\dq,
	\end{array}
\end{equation}
where we have used the Carleman estimate \eqref{robg}.

On the other hand, proceeding as in the second step of the proof of Proposition \ref{pro}, we obtain
$$
\begin{array}{llll}
	\dis \|\rho(0,\cdot)\|^2_{L^2(\Omega)}+\int_{0}^{T/2}\int_{\Omega}|\rho|^2\ dxdt\\
	\dis \leq C(\|K\|_{\infty},T)
	\left(\int_0^{3T/4}\int_\Omega |\psi|^2\ \dq
	+\int_{T/2}^{3T/4}\int_\Omega |\rho|^2\ \dq\right).
\end{array}
$$
Adding the term $\dis \int_{0}^{T/2}\int_\Omega |\psi|^2\ \dq$ on both sides of above estimate, we arrive to
\begin{equation}\label{pou1bn}
\begin{array}{llll}
	\dis \|\rho(0,\cdot)\|^2_{L^2(\Omega)}+\int_{0}^{T/2}\int_{\Omega}|\rho|^2\ dxdt+\int_{0}^{T/2}\int_\Omega |\psi|^2\ \dq\\
	\dis \leq C(\|K\|_{\infty},T)
	\left(\int_{T/2}^{3T/4}\int_\Omega (|\psi|^2+|\rho|^2)
	\ \dq+\int_{0}^{T/2}\int_\Omega |\psi|^2\ \dq\right).
\end{array}	
\end{equation}
In order to eliminate the last term in the right hand side of \eqref{pou1bn}, we use the classical energy estimates for the second equation of system \eqref{adjoin} in the domain $(0,T/2)\times \Omega$ and we obtain:
\begin{equation}\label{poubn}
	\begin{array}{llll}
		\dis \int_{0}^{T/2}\int_{\Omega}|\psi|^2\ dxdt
		\dis &\leq&\dis  \frac{1}{\mu^2}C(\|K\|_{\infty}, T) \left\|\frac{\partial \rho}{\partial\nu}\textbf{1}_{\Gamma}\right\|^2_{H^{1/2,1/4}((0,T/2)\times \partial\Omega)},\\
		&\leq& \dis  \frac{1}{\mu^2}C(\|K\|_{\infty}, T)\left\|\rho\right\|^2_{H^{2,1}((0,T/2)\times \Omega)},\\
		&\leq&\dis  \frac{1}{\mu^2}C(\|K\|_{\infty}, T)\left\|(\zeta^*)^{-\frac{1}{4}}e^{-s\alpha^*}\rho\right\|^2_{H^{2,1}((0,T/2)\times \Omega)}\\
		&\leq&\dis  \frac{1}{\mu^2}C(\|K\|_{\infty}, T)\left\|(\zeta^*)^{-\frac{1}{4}}e^{-s\alpha^*}\rho\right\|^2_{H^{2,1}(Q)}
	\end{array}
\end{equation}
because the map $\dis \rho\longmapsto \frac{\partial \rho}{\partial\nu}\textbf{1}_{\Gamma}$ is continuous and linear from $H^{2,1}(Q)$ to $H^{1/2,1/4}(\Sigma)$ and the functions $\alpha^*$ and $\zeta^*$ are constants in both variables in $(0,T/2)\times \Omega$.

Replacing \eqref{poubn} in \eqref{pou1bn}, and using the fact that the weight functions $\alpha$ and $\zeta$ defined in \eqref{alpha} are bounded in $[0,3T/4]\times \Omega$, we obtain
\begin{equation}\label{pou3bn}
	\begin{array}{llll}
		\dis \|\rho(0,\cdot)\|^2_{L^2(\Omega)}+\int_{0}^{T/2}\int_{\Omega}e^{-2s\alpha}\zeta^3|\rho|^2\ dxdt+\int_{0}^{T/2}\int_\Omega e^{-2s\alpha}\zeta|\psi|^2\ \dq\\
		\dis \leq C(\|K\|_{\infty},T)
		\int_{T/2}^{3T/4}\int_\Omega e^{-2s\alpha}(\zeta|\psi|^2+\zeta^3|\rho|^2)
		\ \dq+ \frac{1}{\mu^2}C(\|K\|_{\infty}, T)\left\|(\zeta^*)^{-\frac{1}{4}}e^{-s\alpha^*}\rho\right\|^2_{H^{2,1}(Q)}.
	\end{array}	
\end{equation}
	
Adding estimates \eqref{pou5bn} and \eqref{pou3bn} and using definitions of $\alpha^*$ and $\zeta^*$, we have	
\begin{equation}\label{canbn}
	\begin{array}{llll}
		\dis \|\rho(0,\cdot)\|^2_{L^2(\Omega)}+\int_{0}^{T}\int_{\Omega}e^{-2s\alpha^*}(\zeta^*)^3|\rho|^2\ dxdt+\int_{0}^{T}\int_\Omega e^{-2s\alpha^*}\zeta^*|\psi|^2\ \dq\\
		\dis \leq\dis C(\Omega, \omega,T)\int_{0}^{T}\int_{\omega}e^{-2s\eta}\varphi^5|\rho|^2\,\dq+ \frac{1}{\mu^2}C(\|K\|_{\infty}, T)\left\|(\zeta^*)^{-\frac{1}{4}}e^{-s\alpha^*}\rho\right\|^2_{H^{2,1}(Q)}.
	\end{array}	
\end{equation}	
To conclude, we want to estimate the last norm in the previous inequality. To this end, we use system \eqref{can} and estimate \eqref{estsigma} with $\dis \sigma=e^{-s\alpha^*}(\zeta^*)^{-1/4}$ and we obtain 

\begin{equation}\label{estsigmabn} 
	\begin{array}{llllll}
		\dis \left\|e^{-s\alpha^*}\zeta^*)^{-\frac{1}{4}}\rho\right\|^2_{H^{2,1}(Q)}\\
		\dis\leq 
		C(\|K\|_{\infty}, T)\left(\int_Q e^{-2s\alpha^*}(\zeta^*)^{-1/2} |\psi|^2\ dxdt+\int_Q e^{-2s\alpha^*}(\zeta^*)^{2} |\rho|^2\ dxdt\right).
	\end{array}
\end{equation}
Replacing \eqref{estsigmabn} in \eqref{canbn}, using the fact that $(\zeta^*)^{-1},\ (\zeta^*)^{-3/2}\in L^\infty(Q)$ and taking $\mu$ large enough, we obtain 
\begin{equation}\label{canbnd}
	\begin{array}{llll}
		\dis \|\rho(0,\cdot)\|^2_{L^2(\Omega)}+\int_{0}^{T}\int_{\Omega}e^{-2s\alpha^*}(\zeta^*)^3|\rho|^2\ dxdt+\int_{0}^{T}\int_\Omega e^{-2s\alpha^*}\zeta^*|\psi|^2\ \dq\\
		\dis \leq\dis C(\Omega, \omega,T)\int_{0}^{T}\int_{\omega}e^{-2s\eta}\varphi^5|\rho|^2\,\dq.
	\end{array}	
\end{equation}	

We obtain the observability inequality \eqref{inequality} due to the fact that $\dis e^{-2s\eta}\varphi^5\in L^\infty(Q)$ and taking 
\begin{equation}\label{varpibn}
	\varpi_2(t)=e^{s\alpha^*}(\zeta^*)^{-1/2}.
\end{equation}

\subsection{The semilinear case}

In the same spirit of Section \ref{nonlin}, we consider a more general version of system  \eqref{eqsemi} given by
\begin{equation}\label{eqsemibn}
	\left\{
	\begin{array}{rllll}
		\dis y_{t}-\Delta y+\int_\Omega K(t,x,\theta)G(y(t,\theta))\ d\theta &=&g\chi_{\omega}+v\chi_{\O}& \mbox{in}& Q,\\
		\dis y&=&u\textbf{1}_{\Gamma}& \mbox{on}& \Sigma, \\
		\dis  y(0,\cdot)&=&y^0 &\mbox{in}&\Omega,
	\end{array}
	\right.
\end{equation}
where $G$ is a globally Lipschitz continuous function. 

As in the Section \ref{nonlin}, in the semilinear case, we lose the convexity of the functional $\widetilde{J}$ defined by \eqref{j}. For this, we state the following result whose the proof can be obtained by proceeding in the same way as in the Proposition \ref{convexity}.

\begin{proposition}\label{convexitybn}
	
	Let us assume that $q_{d}\in L^\infty((0,T); \O_d)$, $g\in L^2((0,T)\times\omega)$ are fixed and $\mu$ is sufficiently large. Suppose that  $q^0\in L^2(\Omega)$, $N\leq 6$, $G\in\mathcal{C}^2(\R)$ and there exists a constant $L>0$ such that $\dis |G^\prime(s)|+|G^{\prime\prime}(s)|\leq L,\ \forall s\in \R$.
	Then, if $\hat{u}$ satisfies \eqref{hatu}, there exists a constant $C>0$ independent of $\mu$ such that 
	\begin{equation}
		\dis D^2\widetilde{J} (g;\hat{u})\cdot(w^1,w^1)\geq C \|w^1\|^2_{L^2((0,T)\times \Gamma)},\ \forall w^1\in L^2((0,T)\times \Gamma),\ w^1\neq 0.
	\end{equation}
	In particular, the functional $\widetilde{J}$ is strictly convex in $ \hat{u}$.	
\end{proposition}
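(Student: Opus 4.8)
The plan is to repeat, line by line, the proof of Proposition \ref{convexity}, now with the control space $L^2((0,T)\times\Gamma)$ in place of $L^2((0,T)\times\O)$ and the normal trace $\frac{\partial p}{\partial\nu}$ on $\Gamma$ playing the role of the interior restriction $p\chi_{\O}$. I would first fix the leader $g$ and the associated optimal follower $\hat u$ (characterized by \eqref{uop}). Given $w^1,w^2\in L^2((0,T)\times\Gamma)$ and $s\in\R$, let $y^s$ solve \eqref{eqsemibn} with the boundary follower $\hat u+sw^1$, set $y:=y^s|_{s=0}$, and let $z^s$ be the derivative of $y^s$ with respect to the follower in the direction $w^2$: it solves the linearized nonlocal equation with null initial datum and non-homogeneous Dirichlet datum $w^2\textbf{1}_{\Gamma}$. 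Introducing the adjoint $p^s$ of $z^s$ (homogeneous on $\Sigma$, vanishing at $t=T$, with right-hand side $(y^s-q_d)\chi_{\O_d}$ plus the linearized nonlocal and nonlinear terms), multiplying by $z^s$, integrating by parts and using Fubini's theorem on the kernel term, one obtains the boundary analogue of \eqref{zp1},
\[
\int_0^T\!\!\int_{\O_d}(y^s-q_d)\,z^s\,\dT=-\int_0^T\!\!\int_{\Gamma}\frac{\partial p^s}{\partial\nu}\,w^2\,d\sigma\,dt ,
\]
which is consistent with \eqref{uop}.

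Subtracting $D\widetilde J(g;\hat u)\cdot w^2$ from $D\widetilde J(g;\hat u+sw^1)\cdot w^2$, dividing by $s$ and letting $s\to 0$, the limits $\phi=\lim_{s\to0}s^{-1}(y^s-y)$ and $\eta=\lim_{s\to0}s^{-1}(p^s-p)$ exist and solve linearized nonlocal systems of the type \eqref{ha}--\eqref{eta1}: $\phi$ carries the boundary datum $w^1\textbf{1}_{\Gamma}$, while $\eta$ is backward, homogeneous on $\Sigma$, null at $t=T$, and its source contains the bilinear term $G''(y)\phi p$ (modulated by the kernel) together with $\phi\chi_{\O_d}$. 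Hence
\[
D^2\widetilde J(g;\hat u)\cdot(w^1,w^1)=\mu\int_0^T\!\!\int_{\Gamma}|w^1|^2\,d\sigma\,dt-\int_0^T\!\!\int_{\Gamma}\frac{\partial\eta}{\partial\nu}\,w^1\,d\sigma\,dt ,
\]
and the whole matter reduces to proving that, for some $C>0$ independent of $\mu$,
\[
\left|\int_0^T\!\!\int_{\Gamma}\frac{\partial\eta}{\partial\nu}\,w^1\,d\sigma\,dt\right|\le C\,\|w^1\|^2_{L^2((0,T)\times\Gamma)} .
\]
Once this holds, $D^2\widetilde J(g;\hat u)\cdot(w^1,w^1)\ge(\mu-C)\|w^1\|^2_{L^2((0,T)\times\Gamma)}$ and strict convexity of $\widetilde J$ in $\hat u$ follows for $\mu$ large.

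For the estimate I would use, as in Proposition \ref{well}, the trace embedding $H^{2,1}(Q)\hookrightarrow H^{1/2,1/4}(\Sigma)$: since $\eta$ solves a parabolic problem with homogeneous boundary and terminal data, parabolic regularity (the nonlocal term being absorbed by Gronwall through $\|K\|_{\infty}$) gives $\|\eta\|_{H^{2,1}(Q)}\le C\|G''(y)\phi p+\phi\chi_{\O_d}\|_{L^2(Q)}$, hence $\left\|\frac{\partial\eta}{\partial\nu}\right\|_{L^2((0,T)\times\Gamma)}\le C\|\eta\|_{H^{2,1}(Q)}$, followed by a Cauchy--Schwarz inequality against $w^1$. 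The term $\phi\chi_{\O_d}$ is harmless because, by the transposition solution theory for the heat equation with $L^2$ boundary data, $\|\phi\|_{L^2(Q)}\le C\|w^1\textbf{1}_{\Gamma}\|_{L^2(\Sigma)}\le C\|w^1\|_{L^2((0,T)\times\Gamma)}$. The delicate term is $G''(y)\phi p$: after bounding $G''$ by $L$, it must be estimated by a product $\|\phi\|_{L^{2r'}((0,T);L^{2s'}(\Omega))}\|p\|_{L^{r}((0,T);L^{s}(\Omega))}$ via H\"{o}lder's inequality, exactly as in \eqref{fa1}. Here lies the difference with the interior case: because the datum of the $\phi$-system lies only in $L^2((0,T)\times\Gamma)$, one has $\phi\in H^{1/2,1/4}(Q)$ --- half of the $L^2((0,T);H^2(\Omega))\cap L^\infty((0,T);H^1(\Omega))$ regularity available in Proposition \ref{convexity} --- so the Sobolev integrability of $\phi$ (and of the trajectory $y$ in \eqref{zp}) drops accordingly; re-running the interpolation and Sobolev-embedding bookkeeping of Proposition \ref{convexity} with these reduced exponents, a conjugate pair $(r,s)$ for which $\phi$ and $p$ both lie in the appropriate mixed Lebesgue spaces (with the required bounds) exists precisely when $N\le 6$, in place of $N\le 12$. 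The energy and parabolic estimates for $\phi$ and for $(y,p)$ (uniform in $\mu$ for $\mu$ large) then close the bound.

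The main obstacle is exactly this loss of a half-derivative: since the follower now forces the equation through its boundary trace, one cannot invoke the $L^2((0,T);H^2(\Omega))\cap L^\infty((0,T);H^1(\Omega))$ regularity of $\phi$ that made the interior argument work, and must instead work with $\phi,\,y\in H^{1/2,1/4}(Q)$ and the $H^{2,1}(Q)\leftrightarrow H^{1/2,1/4}(\Sigma)$ trace duality of Proposition \ref{well}, re-deriving the interpolation inequalities so that H\"{o}lder's inequality in the bilinear term $G''(y)\phi p$ still closes --- this is what brings the dimensional threshold down to $N\le 6$. Everything else (existence of the limits $\phi,\eta$, the integrations by parts, absorption of the nonlocal terms via $\|K\|_{\infty}$, and the concluding coercivity argument) is a routine transcription of the proof of Proposition \ref{convexity}.
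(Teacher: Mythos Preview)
Your proposal is correct and follows precisely the approach the paper indicates: the paper's own proof of Proposition~\ref{convexitybn} consists of a single line stating that it ``can be obtained by proceeding in the same way as in the Proposition~\ref{convexity}'', and your sketch is exactly that adaptation, with the boundary trace $\partial p/\partial\nu$ replacing the interior restriction $p\chi_{\O}$, the transposition/trace regularity of Proposition~\ref{well} replacing the interior parabolic regularity, and the resulting drop from $\phi\in L^2((0,T);H^2)\cap L^\infty((0,T);H^1)$ to $\phi\in H^{1/2,1/4}(Q)$ correctly identified as the reason for the tightened threshold $N\le 6$. Your sketch is in fact considerably more detailed than what the paper provides.
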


Proceeding as in the Proposition \ref{quasi}, the optimal control $\hat{u}$ is given by
\begin{equation}\label{uopbn}
	\hat{u}=\frac{1}{\mu}\frac{\partial p}{\partial\nu}\ \ \mbox{on}\ \ \ (0,T)\times \Gamma,	
\end{equation}
where  $(q,p)$ is the solution of optimality system
\begin{equation}\label{zpbn}
	\left\{
	\begin{array}{ccl}
		\dis q_{t}-\Delta q+\int_\Omega K(t,x,\theta)G(q(t,\theta))\ d\theta =\dis g\chi_{\omega}& \mbox{in}& Q,\\
		\dis -p_{t}-\Delta p+\int_\Omega K(t,x,\theta)G^\prime(q(t,\theta))p(t,\theta)\ d\theta =(q-q_d)\chi_{\O_d}& \mbox{in}& Q,\\
		\dis q=\frac{1}{\mu}\frac{\partial p}{\partial\nu}\textbf{1}_{\Gamma},\ \ \ p=0& \mbox{on}& \Sigma, \\
		\dis  q(0,\cdot)=q^0,\ \ \ p(T,\cdot)=0 &\mbox{in}&\Omega.
	\end{array}
	\right.
\end{equation}		

The following result holds.
\begin{theorem}\label{theosemibn}
	Under the assumptions of Proposition \ref{convexitybn}, there exists a positive  weight function $\varpi_2=\varpi_2(t)$ blowing up at $t=T$ such that for any $q_{d}\in L^2((0,T)\times\O_d)$ satisfying \eqref{noubis}, there exist a control $\hat{g}\in L^2((0,T)\times \omega)$ and a unique optimal control $\hat{u}\in H^{1/2,1/4}((0,T)\times\Gamma)$ such that the corresponding solution to \eqref{zpbn} satisfies \eqref{obj}. 
\end{theorem}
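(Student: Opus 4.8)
The plan is to mimic the proof of Theorem \ref{theosemi} line by line, replacing the interior-follower machinery of Section \ref{nonlin} by the boundary-follower machinery developed in Section \ref{boundary}. First I would perform the change of variable $z=q-\bar q$ (with $\bar q$ the solution of \eqref{eqbisbar}) to reduce \eqref{obj} to the null controllability condition \eqref{qnull} for the transformed optimality system; as in \eqref{zpsemi} the globally Lipschitz nonlinearity is written in the form $a(z)z$ and $b(z)p$ with $a(z)=\int_0^1 G'(\bar q+sz)\,ds$ and $b(z)=G'(z+\bar q)$, and by the hypothesis on $G$ both coefficients are bounded in $L^\infty(Q)$ by a constant $M$ independent of $z$. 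For each frozen $w\in L^2(Q)$ I would then consider the linearized optimality system, which has exactly the structure of \eqref{zp} but with the extra zeroth-order terms $a(w)z$ and $b(w)p$, together with its adjoint, which has the structure of \eqref{adjoin} with the analogous zeroth-order terms.

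Next I would establish the observability inequality \eqref{inequality} uniformly in $w$. This is where the boundary nature of the follower enters: the proof of Proposition \ref{observq} (via Proposition \ref{carlemang}, Lemma \ref{prop4} and Lemma \ref{prog}) goes through verbatim, because the new zeroth-order terms $a(w)\rho$, $b(w)\psi$ are absorbed in the right-hand side of the Carleman estimates by taking $s$ large, exactly as the nonlocal kernel terms are absorbed in \eqref{esta}–\eqref{estb} and as the zeroth-order terms are handled in the proof of Theorem \ref{theosemi}. The resulting constant in \eqref{obsersemi}-type inequality then depends additionally on $\|a(w)\|_{L^\infty(Q)}$ and $\|b(w)\|_{L^\infty(Q)}$, hence on $M$, but not on the particular $w$. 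With this uniform observability in hand, arguing as in Proposition \ref{linear} (penalization by $J_\varepsilon$, passage to the limit $\varepsilon\to0$), for each $w$ I obtain a leader control $\hat g_w\in L^2((0,T)\times\omega)$ driving the linearized state to rest at $t=T$, with a bound $\|\hat g_w\|_{L^2((0,T)\times\omega)}\le C(\|\varpi_2 z_d\|_{L^2((0,T)\times\O_d)}+\|q^0\|_{L^2(\Omega)})$ that is uniform in $w$; the corresponding boundary follower $\hat u_w=\frac1\mu\frac{\partial p}{\partial\nu}$ lies in $H^{1/2,1/4}((0,T)\times\Gamma)$ and is unique by the strict convexity established in Proposition \ref{convexitybn}, for $\mu$ large.

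Finally I would close the argument by a Schauder fixed-point, as in the proof of Theorem \ref{theosemi}: define $S:L^2(Q)\to L^2(Q)$ by $S(w)=q$, where $(q,p)$ is the controlled solution of the linearized system associated with $w$ (together with the controls $\hat g_w$, $\hat u_w$ constructed above), and show that $S$ is continuous, compact, and has bounded range, the last property being a consequence of the uniform estimates on $\hat g_w$ and the regularity/energy estimates for \eqref{zp}-type systems from Proposition \ref{well}. A fixed point $q=S(q)$ is then a solution of \eqref{zpbn} satisfying \eqref{znullsemi}, i.e. the original trajectory \eqref{obj} is reached, with $\hat g\in L^2((0,T)\times\omega)$ and $\hat u\in H^{1/2,1/4}((0,T)\times\Gamma)$. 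I expect the main obstacle to be the fixed-point step, specifically verifying compactness of $S$ and the uniform bound on its range in the boundary-control setting: one must track carefully how the $H^{1/2,1/4}(\Sigma)$ regularity of the follower propagates through the optimality system (relying on the trace estimate $\rho\mapsto\frac{\partial\rho}{\partial\nu}\mathbf 1_\Gamma$ being continuous $H^{2,1}(Q)\to H^{1/2,1/4}(\Sigma)$) so that the a priori bounds are genuinely independent of $w$; everything else is a routine transcription of Sections \ref{nonlin} and \ref{boundary}.
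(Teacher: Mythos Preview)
Your proposal follows essentially the same route as the paper: change of variable $z=q-\bar q$, rewrite the semilinear optimality system with bounded coefficients $a(\cdot),b(\cdot)$, freeze $w\in L^2(Q)$, establish a uniform-in-$w$ observability inequality of type \eqref{inequality} via the boundary Carleman machinery of Propositions \ref{carlemang}--\ref{observq}, deduce null controllability of the linearized system as in Proposition \ref{linear}, and close by Schauder's fixed point exactly as in Theorem \ref{theosemi}.

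One small slip to correct: in system \eqref{eqsemibn} the nonlinearity $G$ sits \emph{inside} the nonlocal integral, so after the change of variable the coefficients $a(\cdot)$ and $b(\cdot)$ do not appear as separate local zeroth-order terms $a(w)z$, $b(w)p$ (as in \eqref{zpsemi}) but rather multiply the nonlocal term, which is how the paper writes the linearized system \eqref{zplinearbn} and its adjoint. This changes nothing in the Carleman/observability step---a bounded factor in front of the integral kernel is absorbed exactly as in \eqref{esta}--\eqref{estb}---but you should record the linearized and adjoint systems accordingly.
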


\begin{proof}
	In order to prove Theorem \ref{theosemibn}, we can use the ideas of Theorem \ref{theosemi} given in the Section \ref{nonlin}. Let us take $z=q-\bar{q}$, where $q$ is the solution of \eqref{eqbis} and $\bar{q}$ is the solution of \eqref{eqbisbar}.  Let us rewrite \eqref{zpbn} of the form
	\begin{equation}\label{zpsemibn}
		\left\{
		\begin{array}{ccl}
			\dis z_{t}-\Delta z+\int_\Omega K(t,x,\theta)a(z(t,\theta))z(t,\theta)\ d\theta =\dis g\chi_{\omega}& \mbox{in}& Q,\\
			\dis -p_{t}-\Delta p+\int_\Omega K(t,x,\theta)b(z(t,\theta))p(t,\theta)\ d\theta =(z-z_d)\chi_{\O_d}& \mbox{in}& Q,\\
			\dis z=\frac{1}{\mu}\frac{\partial p}{\partial\nu}\textbf{1}_{\Gamma},\ \ \ p=0& \mbox{on}& \Sigma, \\
			\dis  z(0,\cdot)=z^0,\ \ \ p(T,\cdot)=0 &\mbox{in}&\Omega,
		\end{array}
		\right.
	\end{equation}
	where $z^0:=q^0-\bar{q}^0$, $z_d:=q_d-\bar{q}$ and 
	$$
	a(z)=\int_0^1G^\prime(\bar{y}+s z)\ ds,\ \ b(z)=G^\prime(z+\bar{y}).
	$$
	The condition \eqref{obj} is equivalent to a null controllability property for $z$, that is
	\begin{equation}\label{znullsemibn}
		z(T,x)=0\ \mbox{in}\ \Omega.
	\end{equation}	
	
	Let us consider for each $w\in L^2(Q)$ the following linear system associated to \eqref{zpsemibn}
	\begin{equation}\label{zplinearbn}
		\left\{
		\begin{array}{ccl}
			\dis z_{t}-\Delta z+a(w)\int_\Omega K(t,x,\theta)z(t,\theta)\ d\theta =\dis g\chi_{\omega}& \mbox{in}& Q,\\
			\dis -p_{t}-\Delta p+b(w)\int_\Omega K(t,x,\theta)p(t,\theta)\ d\theta =(z-z_d)\chi_{\O_d}& \mbox{in}& Q,\\
			\dis z=\frac{1}{\mu}\frac{\partial p}{\partial\nu}\textbf{1}_{\Gamma},\ \ \ p=0& \mbox{on}& \Sigma, \\
			\dis  z(0,\cdot)=z^0,\ \ \ p(T,\cdot)=0 &\mbox{in}&\Omega,
		\end{array}
		\right.
	\end{equation}
	
	Thanks to the hypothesis on $G$ given by the Proposition \ref{convexitybn}, there exists a constant $M>0$ such that
	$$
	\|a(w)\|_{L^\infty(Q)},\ \|b(w)\|_{L^\infty(Q)}\leq M,\ \forall w\in L^2(Q).
	$$
	
	Thanks to Proposition \ref{observq}, we can prove the following observability inequality
	\begin{equation}\label{obsersemibn}
		\|\rho(0,\cdot)\|^2_{L^2(\Omega)}+\int_{Q}\frac{1}{\varpi_2^2 }|\psi|^2\,\dT
		\leq C\int_{0}^{T}\int_{\omega}|\rho|^2\,\dq,
	\end{equation}
	where $C=C(\Omega,\omega, \|K\|_{\infty},\|a(w)\|_{L^\infty(Q)},\|b(w)\|_{L^\infty(Q)}, T)>0$, the function $\varpi_2$ is given by \eqref{varpibn} and $(\rho, \psi)$ is the solution to the following adjoint system associated to \eqref{zplinearbn}:
	\begin{equation}\label{}
		\left\{
		\begin{array}{ccl}
			\dis -\rho_t-\Delta \rho+a(w)\int_\Omega K(t,x,\theta)\rho(t,\theta)\ d\theta =\dis \psi\chi_{\O_d}& \mbox{in}& Q,\\
			\dis \psi_t-\Delta \psi+b(w)\int_\Omega K(t,x,\theta)\psi(t,\theta)\ d\theta =0& \mbox{in}& Q,\\
			\dis \rho=0,\ \ \ \psi=\frac{1}{\mu}\frac{\partial \rho}{\partial\nu}\textbf{1}_{\Gamma}& \mbox{on}& \Sigma, \\
			\dis  \rho(T,\cdot)=\rho^T,\ \ \ \psi(0,\cdot)=0 &\mbox{in}&\Omega.
		\end{array}
		\right.
	\end{equation}

	Thanks to estimate \eqref{obsersemibn} and arguing as in the end of the proof of Theorem \ref{theosemi}, we obtain the controllability result of the semilinear system \eqref{eqsemibn}.
\end{proof}

%\section*{Funding}
%The first author was supported by the German Academic Exchange Service (D.A.A.D) under the Scholarship Program PhD AIMS-Cameroon. The second  author was supported by a grant from the African Institute for Mathematical Sciences, www.nexteinstein.org, with financial support from the Government of Canada, provided through Global Affairs Canada, www.international.gc.ca, and the International Development Research Centre, www.idrc.ca.

\end{document}